\definecolor{darkblue}{rgb}{0.0,0,0.7}
\newcommand{\darkblue}{\color{darkblue}}
\definecolor{darkred}{rgb}{0.68,0,0}
\newcommand{\darkred}{\color{darkred}}
\definecolor{darkgreen}{rgb}{0,.38,0}
\newcommand{\darkgreen}{\color{darkgreen}}
\newcommand{\defn}[1]{\emph{\darkblue #1}}
\newcommand{\defna}[1]{\emph{\darkred #1}}
\newcommand{\defnb}[1]{\emph{\darkblue #1}}
\newcommand{\defng}[1]{\emph{\darkgreen #1}}
\setlist[enumerate]{
	label=\textnormal{({\roman*})},
	ref={\roman*}}
\def\th@plain{%
	\thm@notefont{}% same as heading font
	\itshape % body font
}
\def\th@definition{%
	\thm@notefont{}% same as heading font
	\normalfont % body font
}
\def\fdsy@scale{1}
\newcommand\fdsy@mweight@normal{Book}
\newcommand\fdsy@mweight@small{Book}
\newcommand\fdsy@bweight@normal{Medium}
\newcommand\fdsy@bweight@small{Medium}
\DeclareFontFamily{U}{FdSymbolB}{}
\DeclareFontShape{U}{FdSymbolB}{m}{n}{
	<-7.1> s * [\fdsy@scale] FdSymbolB-\fdsy@mweight@small
	<7.1-> s * [\fdsy@scale] FdSymbolB-\fdsy@mweight@normal
}{}
\DeclareFontShape{U}{FdSymbolB}{b}{n}{
	<-7.1> s * [\fdsy@scale] FdSymbolB-\fdsy@bweight@small
	<7.1-> s * [\fdsy@scale] FdSymbolB-\fdsy@bweight@normal
}{}
\DeclareSymbolFont{fdrelations}{U}{FdSymbolB}{m}{n}
\DeclareMathSymbol{\lescc}{\mathrel}{fdrelations}{66}
\newtheorem{thm}{Theorem}[section]
\newtheorem{lemma}[thm]{Lemma}
\theoremstyle{definition}
\newtheorem{rem}[thm]{Remark}
\newtheorem{definition}[thm]{Definition}
\numberwithin{figure}{section}
\numberwithin{equation}{section}
\def\wh{\widehat}
\def\zz{\mathbb Z}
\def\AA{\mathbb A}
\def\nn{\mathbb N}
\def\rr{\mathbb R}
\def\rrs{\mathbb R_{>0}}
\def\rrp{\mathbb R_{\ge 0}}
\def\sm{\smallsetminus}
\def\la{\lambda}
\def\ep{\varepsilon}
\def\al{\alpha}
\def\be{\beta}
\def\ve{\varepsilon}
\def\cI{\mathcal I}
\def\cL{\mathcal L}
\def\cP{\mathcal P}
\def\ssu{\subset}
\def\<{\langle}
\def\>{\rangle}
\def\rI{ {\text {\sc I} } }
\def\rk{\textnormal{rk}}
\def\0{{\mathbf 0}}
\def\.{\hskip.06cm}
\def\ts{\hskip.03cm}
\def\sts{\hskip.015cm}
\def\area{{\text {\rm area} }}
\def\ba{\textbf{\textit{a}}}
\def\nin{\noindent}
\def\aD{\textrm{D}}
\def\ah{\textrm{h}}
\def\aI{{ \textrm{I} } }
\def\aM{\textrm{M}}
\def\am{\textrm{m}}
\def\an{\textrm{n}}
\newcommand\ar{\textnormal{d}} % I put textnormal mode so I remember that it is a macro
\def\av{\textrm{v}}
\def\bA{\textbf{\textrm{A}}\hskip-0.03cm{}}
\def\bAr{\textbf{\textrm{\em A}}\hskip-0.03cm{}}
\def\bB{\textbf{\textrm{B}}\hskip-0.03cm{}}
\def\bD{\textbf{\textrm{D}}\hskip-0.03cm{}}
\def\bM{\textbf{\textrm{M}}\hskip-0.03cm{}}
\def\bMr{\textbf{\textrm{\em M}}\hskip-0.03cm{}}
\def\bNr{\textbf{\textrm{\em N}}\hskip-0.03cm{}}
\def\bN{\textbf{\textrm{N}}\hskip-0.03cm{}}
\def\bT{\mathbf{T}}
\def\aA{\textrm A}
\DeclareMathOperator{\Ac}{\mathcal{A}}
\def\ac{c}
\def\ag{\textrm{g}}
\def\aw{\textrm{w}}
\DeclareMathOperator{\Cnt}{\textnormal{Cont}} %Set of feasible continuations
\DeclareMathOperator{\eb}{\mathbf{e}}
\DeclareMathOperator{\Hc}{\mathcal{H}}
\DeclareMathOperator{\Hr}{\mathscr{H}}
\DeclareMathOperator{\Ic}{\mathcal{I}}
\DeclareMathOperator{\Lc}{\mathcal{L}}
\DeclareMathOperator{\mb}{{\textbf{m}}}
\DeclareMathOperator{\nb}{\mathbf{n}}
\DeclareMathOperator{\Rb}{\mathbb{R}}
\DeclareMathOperator{\Sb}{\mathbb{S}}
\newcommand{\spstar}{\textsf{null}} %special element
\newcommand{\spstarr}{{\textsf{\em null}}} %special element
\newcommand{\supp}{\textnormal{supp}}
\DeclareMathOperator{\gb}{\mathbf{g}}
\DeclareMathOperator{\hb}{\mathbf{h}}
\newcommand\hbw[1]{{\bT^{\<#1\>}\mathbf{{h}}}}
\DeclareMathOperator{\ub}{\mathbf{u}}
\DeclareMathOperator{\vb}{\mathbf{v}}
\newcommand\vbw[1]{\bT^{\<#1\>}\mathbf{{v}}}
\DeclareMathOperator{\wb}{\mathbf{w}}
\DeclareMathOperator{\xb}{\mathbf{x}}
\def\Xf{{\wh{X}}}
\DeclareMathOperator{\yb}{\mathbf{y}}
\DeclareMathOperator{\zb}{\mathbf{z}}
\DeclareMathOperator{\Zb}{\mathbb{Z}}
\DeclareMathOperator{\Ef}{\Theta} %  Edge set
\DeclareMathOperator{\ef}{\text{\it e}} %  edges
\DeclareMathOperator{\Qf}{ {\Gamma} } %  Quiver
\DeclareMathOperator{\Vf}{\Omega} %  Vertex set
\DeclareMathOperator{\Vfm}{\Omega^0} %  Vertex set
\DeclareMathOperator{\Vfp}{\Omega^+} %  Vertex set
\DeclareMathOperator{\vf}{{\text{\it \sts v}}} %  vertex
\DeclareMathOperator{\vfs}{{\text{\it \sts v}}^\ast} %  vertex
\def\zero{\0}
\newcommand{\Mf}{\mathscr{M}}
\newcommand{\iA}{\textnormal{A}} %% Macro for convex bodies
\newcommand{\iB}{\textnormal{B}} %% Macro for convex bodies
\newcommand{\iF}{\textnormal{F}} %% Macro for convex bodies
\newcommand{\iJ}{\textnormal{J}} %% subspaces
\newcommand{\iK}{\textnormal{K}} %% Macro for convex bodies
\newcommand{\iP}{\textnormal{P}} %% Macro for convex bodies
\newcommand{\ir}{\textnormal{r}} %% Macro for convex bodies
\newcommand{\iQ}{\textnormal{Q}} %% Macro for convex bodies
\newcommand{\iS}{\textnormal{S}} %% Macro for segments
\newcommand{\iU}{\textnormal{U}} %% Mixed volumes
\newcommand{\iV}{\textnormal{V}} %% Mixed volumes
\newcommand{\iW}{\textnormal{W}} %% subspaces
\newcommand{\Vol}{\textnormal{Vol}} %% Normal volume
\title{Introduction to the combinatorial atlas}
\date{\today}
\author[Swee Hong Chan \and Igor Pak]{Swee Hong Chan$^{\star}$ \ \. \and \ \. Igor~Pak$^{\star}$}
\thanks{\thinspace ${\hspace{-.45ex}}^\star$Department of Mathematics,
UCLA, Los Angeles, CA, 90095. \  Email: \. \texttt{\{sweehong,\ts{pak}\}@math.ucla.edu}}
\begin{document}
\begin{abstract}
We give elementary self-contained proofs of the \emph{strong Mason conjecture}
recently proved by Anari~et.~al~\cite{ALOV} and Br\"and\'en--Huh~\cite{BH}, and
of the classical \emph{Alexandrov--Fenchel inequality}.  Both proofs use the
\emph{combinatorial atlas} technology recently introduced by the authors~\cite{CP}.
We also give a formal relationship between combinatorial atlases and
\emph{Lorentzian polynomials}.
\end{abstract}

	\maketitle

\section{Introduction}

In this paper we tell three interrelated but largely independent stories.
While we realize that this sounds self-contradictory, we insist on this
description.
We prove no new results, nor do we claim to give new proofs of known results.
Instead, we give a \defng{new presentation} of the existing proofs.

Our goal is explain the \defna{combinatorial atlas} technology
from~\cite{CP} in three different contexts.  The idea is to both give
a more accessible introduction to our approach and connect it to
other approaches in the area.  Although one can use this paper as a
companion to~\cite{CP}, it is written completely independently and
aimed at a general audience.

\medskip

\nin
$(1)$ \ \defn{Strong Mason conjecture} claims ultra-log-concavity of the number
of independent sets of matroid according to its size.  This is perhaps the
most celebrated problem recently resolved in a series of paper culminating
with independent proofs by Anari~et.~al~\cite{ALOV} and
Br\"and\'en--Huh~\cite{BH}.  These proofs use the technology of
\emph{Lorentzian polynomials}, which in turn substantially simplify
earlier heavily algebraic tools.

In our paper~\cite{CP}, we introduce the \emph{combinatorial atlas} technology
motivated by geometric considerations of the \emph{Alexandrov--Fenchel inequality}.
This allowed us, among other things, to prove an advanced generalization of
the strong Mason conjecture to a large class of greedoids.  The conjecture
itself and its refinements followed easily from our more general results.
Our first story is a self-contained streamlined proof of just this conjecture,
without the delicate technical details necessary for our generalizations.

\medskip

\nin
$(2)$ \ \defn{Lorentzian polynomials} as a technology is an interesting
concept in its own right.  In~\cite{BH}, the authors showed not only how
to prove matroid inequalities, but also how to place the technology in the
context of ideas and approaches in other areas, including the above mentioned
Alexandrov--Fenchel inequality.
%See also~\cite{ALOV-RW} for some remarkable probabilistic applications.

We do something similar in our second story, by showing that the theory of
Lorentzian polynomials is a special case of the theory of combinatorial atlases.
More precisely, for every Lorentzian polynomial we construct a combinatorial
atlas which mimics the polynomial properties and allows to derive the same
conclusions.

\medskip

\nin
$(3)$ \ The \defn{Alexandrov--Fenchel inequality} \ts is the classical geometric
inequality which remains deeply mysterious.  There are several algebraic
and analytic proofs, all of them involved and technical, to different degree.
Much of the combinatorial atlas technology owes to our deconstruction of the
insightful recent proof in~\cite{SvH19} by Shenfeld and van Handel.

In the third story, we proceed in the reverse direction, and prove the Alexandrov--Fenchel
inequality by the tools of the combinatorial atlas.  The resulting proof
is similar to that in~\cite{SvH19}, but written in a different language and
filling the details not included in~\cite{SvH19}.  Arguably, this is the first 
exposition of the proof of the Alexandrov--Fenchel inequality that is 
both elementary and self-contained.

\medskip

The paper structure is very straightforward.  After the short
notation section (Section~\ref{s:def}), we define the combinatorial atlas
and state its properties (Section~\ref{s:atlas}).  This is a prequel to
all three sections that follow, all of which are independent from each
other, and cover items~$(1)$, $(2)$ and~$(3)$.  At risk of
repeating ourselves, let us emphasize that these three
Sections~\ref{s:matroid},~\ref{s:Lorentzian} and~\ref{s:AF ineq}
can be read in any order.  We conclude with brief final remarks
in Section~\ref{s:finrem}.  For further background and
historical remarks, see the extensive $\S$16 and~$\S$17 in~\cite{CP}.

\bigskip

\section{Definitions and notations}\label{s:def}

We use \ts $[n]=\{1,\ldots,n\}$, \ts $\nn = \{0,1,2,\ldots\}$,
\ts $\Zb_+ = \{1,2,\ldots\}$, \ts $\rrp=\{x\ge 0\}$ \ts and \ts $\rrs=\{x> 0\}$.
For a subset \ts $S\subseteq X$ \ts and element \ts $x\in X$,
we write \ts $S+x:=S\cup \{x\}$ \ts and \ts $S-x:=S\sm\{x\}$.

\smallskip

Throughout the paper we denote matrices with bold capitalized letter and
the entries by roman capitalized letters: \ts $\bM=(\aM_{ij})$.  We also keep
conventional index notations, so, e.g., \. $\big( \bM^3+ \bM^2 \big)_{ij}$ \. is the $(i,j)$-th
matrix entry of \. $\bM^3+\bM^2$.
We denote vectors by bold small letters, while vector entries by either unbolded uncapitalized letters
or vector components, e.g.\ \ts $\hb = (\ah_1,\ah_2,\ldots)$ \ts and \ts $\ah_i= (\hb)_i$.

A real matrix (respectively, a real vector) is \defn{nonnegative} if all its entries are nonnegative real numbers, and
is \defn{strictly positive} if all of its entries are positive real numbers.
The \defn{support} of a real \ts $\ar \times \ar$ \ts symmetric matrix $\bM$ is defined as:
 \[
 \supp\ts (\bM) \ := \ \bigl\{ \.  i\in [\ar] \. : \. \aM_{ij} \neq 0  \ \text{ for some } j \in [\ar] \. \bigr\}.
 \]
In other words,  $\supp(\bM)$ \ts is  the set of indexes for which the corresponding row and
column of $\bM$ are nonzero vectors.
Similarly, the \defn{support} of a real $\ar$-dimensional vector $\hb$ is defined as:
    \[  \supp\ts (\hb) \ := \ \{ \.  i \in [\ar] \. : \. \ah_{i} \neq 0  \. \}. \]
For  vectors \ts $\vb, \wb \in \Rb^{\ar}$, we write \ts $\vb \leqslant \wb$ \ts
to mean the componentwise inequality, i.e.\ \ts $\av_i \leq \aw_i$ \ts for all $i \in [\ar]$.
We write \ts $|\vb| := \av_1+\ldots+\av_{\ar}$.  We also use \ts
$\eb_1,\ldots, \eb_{\ar}$ \ts to denote the standard basis of \ts $\Rb^{\ar}$.

For a subset \ts $S\subseteq [\ar]$,
   the \defn{characteristic vector} of~$S$  is the vector \. $\vb \in \Rb^{\ar}$ \. such that
  \ts $\av_i=1$ \ts if \ts $i \in S$ and \ts $\av_i=0$ \ts if \ts $i \notin S$.
   We use \ts $\0 \in \Rb^{\ar}$ \ts to denote the zero vector.
Denote by \ts $\Sb^{n-1}$ \ts the set of unit vectors in \ts
$\Rb^n$, i.e.\ vectors \ts $\vb\in\rr^\ar$ \ts
with Euclidean norm \ts $\|\vb\|=1$.  We use \. $\Vol_n(P)$ \. to denote $n$-dimensional
volume of polytope~$P$.  When \ts $n=2$, we write \. $\area(P)=\Vol_2(P)$ \. for the
area of polygon \ts $P\ssu \rr^2$.
We adopt the convention that \. $\Vol_0(P)=1$ \.  when \ts $P$
\ts is a point.

Finally, we make a frequent use of (lesser known) trigonometric functions
$$\csc \theta \. := \. \frac{1}{\sin \theta} \qquad \text{and} \qquad
\cot \theta \. := \. \frac{\cos \theta}{\sin \theta}\,.$$

\bigskip

\section{Combinatorial atlases and hyperbolic matrices}
\label{s:atlas}

In this section we introduce combinatorial atlases and present
the local–global principle which allows one to recursively
establish hyperbolicity of vertices.

\subsection{Combinatorial atlas}\label{ss:atlas-def}
Let \ts $\cP=(\Vf,\prec)$ \ts be a locally finite poset of bounded height.\footnote{In our
examples, the poset~$\cP$ can be both finite and infinite.}
Denote by \. $\Qf=(\Vf, \Ef):=\Hc_\cP$ \. be the acyclic digraph given by the
Hasse diagram $\Hc_\cP$ of~$\cP$.  Let \ts $\Vfm\subseteq\Vf$ \ts be the set of maximal
elements in~$\cP$, so these are \defn{sink vertices} in~$\Qf$.  Similarly, denote by
\ts $\Vfp:=\Vf\sm \Vfm$ \ts the \defn{non-sink vertices}.  We write \ts $\vfs$ \ts
for the set of out-neighbor vertices \ts $v'\in \Vf$, such that \ts $(v,v')\in \Ef$.

\smallskip

\begin{definition}{\rm
A \defna{combinatorial atlas} \ts $\AA=\AA_\cP$ \ts of dimension \ts $\ar$ \ts
is an acyclic digraph   \. $\Qf:=(\Vf, \Ef)$ \. with an additional structure:

\smallskip

%\nin
$\circ$ \ Each vertex \ts $\vf \in \Vf$ \ts is associated with a  pair \ts $(\bM_{\vf}, \hb_{\vf})$\ts, where \ts
$\bM_{\vf}$ \ts is a  symmetric \ts  $\ar \times \ar$ \ts matrix

\quad \. with nonnegative diagonals,
and \ts $\hb_{\vf}\in \rr_{\ge 0}^\ar$ \ts is a nonnegative vector.

\smallskip
	
% \nin
$\circ$ \ The outgoing edges of each vertex \ts $\vf \in \Vfp$ \ts
are labeled with indices \ts $i\in [\ar]$, without repetition.

\quad \.
We denote the edge labeled \ts $i$ \ts as  \ts
$\ef^{\<i\>}=(\vf,\vf^{\<i\>})$, where \ts $1 \leq i \leq \ar$.

\smallskip
	
% \nin
$\circ$ \ Each edge $\ef^{\<i\>}$ is associated to a linear transformation \.
$\bT^{\langle i \rangle}_{\vf}: \ts \Rb^{\ar} \to \Rb^{\ar}$.

\smallskip

\nin
Whenever clear, we drop the subscript~$\vf$ to avoid cluttering.
We call \. $\bM=(\aM_{ij})_{i,j \in [\ar]}$ \. the \defn{associated matrix} of $\vf$,
and \. $\hb = (\ah_i)_{i \in [\ar]}$  \. the \defn{associated vector} of~$\vf$.
In notation above, we have \ts $\vf^{\<i\>}\in \vfs$, for all \ts $1\le i \le \ar$.

}
\end{definition}

\begin{rem}
Note that in \cite{CP}, the matrix $\bM_v$ is a nonnegative matrix.
We use a weaker condition here so that we can prove Alexandrov--Fenchel
inequality, cf.~\cite[$\S$17.6]{CP}. \end{rem}

\smallskip

\subsection{Local-global principle}\label{ss:atlas-local}
A matrix $\bM$ is called \defn{hyperbolic}, if
	\begin{equation}\label{eq:Hyp}
\langle \vb, \bM \wb \rangle^2 \  \geq \  \langle \vb, \bM \vb \rangle  \langle \wb, \bM \wb \rangle \quad \text{for every \ \, $\vb, \wb \in \Rb^{\ar}$, \ \  such that \ \ $\langle \wb, \bM \wb \rangle > 0$}. \tag{Hyp}
\end{equation}
For the atlas \ts $\AA$, we say that \ts $\vf\in \Vf$ \ts is \defn{hyperbolic},
if the associated matrix \ts $\bM_{\vf}$ \ts is hyperbolic, i.e.\ satisfies \eqref{eq:Hyp}.
We say that atlas \ts $\AA$ \ts satisfies \ts\defng{hyperbolic property} \ts if every
 \ts $\vf\in \Vf$ \ts is hyperbolic.

\smallskip

Note that property \eqref{eq:Hyp} depends only on the support of $\bM$,
i.e.\ it continues to hold after adding or removing zero rows or columns.
This simple observation will be used repeatedly through the paper.

\smallskip

We say that atlas \ts $\AA$ \ts satisfies \ts\defng{inheritance property} \ts if for
every non-sink vertex \ts $\vf\in \Vfp$, we have:
\begin{equation}\label{eq:Inh}\tag{Inh}
\begin{split}
		&	  (\bM \vb )_{i} \ = \ \big \langle   \bT^{\<i\>}\vb, \, \bM^{\<i\>} \,
\bT^{\<i\>}\hb \big \rangle \quad \text{ for every} \ \ \, i \in \supp(\bM) \text{ \ \ and \ \. }  \vb \in \Rb^{\ar},
\end{split}
\end{equation}
where \ts \ts $\bT^{\<i\>}=\bT^{\<i\>}_{\vf}$\., \ts $\hb=\hb_{\vf}$ \. and
\. $\bM^{\<i\>}:=\bM_{{\vf}^{\<i\>}}$ \. is the matrix associated with~$\vf^{\<i\>}$\..

\smallskip

Similarly, we say that atlas \ts $\AA$ \ts satisfies \ts\defng{pullback property} \ts if for
every non-sink vertex \ts $\vf\in \Vfp$, we have:
\begin{equation}\label{eq:Pull}\tag{Pull}
		\sum_{i\ts\in\ts \supp(\bM)} \. \ah_{i} \, \big \langle   \bT^{\<i\>}\vb, \, \bM^{\<i\>} \,  \bT^{\<i\>}\vb  \big \rangle  \ \geq \ \langle \vb, \bM \vb \rangle \qquad \text{ for every } \vb \in \Rb^{\ar},
\end{equation}
and we say that atlas \ts $\AA$ \ts satisfies \ts\defng{pullback equality property} \ts if for
every non-sink vertex \ts $\vf\in \Vfp$, we have:
\begin{equation}\label{eq:PullEq}\tag{PullEq}
	\sum_{i\ts\in\ts \supp(\bM)} \. \ah_{i} \, \big \langle   \bT^{\<i\>}\vb, \, \bM^{\<i\>} \,  \bT^{\<i\>}\vb  \big \rangle  \ = \ \langle \vb, \bM \vb \rangle \qquad \text{ for every } \vb \in \Rb^{\ar}.
\end{equation}
Clearly \eqref{eq:PullEq}  implies \eqref{eq:Pull}.
All log-concave inequalities in this paper satisfy this stronger property~\eqref{eq:PullEq};
we refer to \cite{CP} for applications of~\eqref{eq:Pull} when \eqref{eq:PullEq} is not satisfied.

\smallskip

We say that a non-sink vertex \ts $\vf\in \Vfp$ \ts is \defn{regular} if the following positivity conditions are satisfied:
\begin{align}\label{eq:Irr}\tag{Irr}
& \text{The associated matrix \ts $\bM_{\vf}$ \ts restricted to its support is irreducible.} \\
\label{eq:hPos} \tag{h-Pos} &\text{Vectors  \ts $\hb_{\vf}$ \ts  and \ts $\bM_{\vf}\hb_{\vf}$ \ts  are strictly positive when restricted to the support of \ts $\bM_{\vf}$\..}
\end{align}

\smallskip

\begin{rem}
In \cite{CP}, \eqref{eq:hPos} does not impose positivity on \ts $\bM_{\vf} \hb_{\vf}$, since in that setting this vector is positive by the positivity of \ts $\hb_{\vf}$ \ts and non-negativity of \ts $\bM_{\vf}$.  Note also that \eqref{eq:PullEq} is a new property not mentioned in~\cite{CP}.
\end{rem}

\smallskip

\begin{thm}[{\rm {\em \defna{local–global principle}, {\em see}~\cite[Thm~5.2]{CP}}}{}]
\label{t:Hyp}
	Let $\AA$ be a combinatorial atlas that satisfies properties \eqref{eq:Inh} and \eqref{eq:Pull}, and
	let \ts $\vf\in \Vfp$ \ts be a non-sink regular vertex of~$\Qf$.
	Suppose every out-neighbor of \ts $\vf$ \ts is hyperbolic.
	Then \ts $\vf$ \ts is also hyperbolic.
\end{thm}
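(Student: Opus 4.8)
The plan is to reduce the hyperbolicity of $\vf$ to the hyperbolicity of its out-neighbors by transporting the quadratic form through the linear maps $\bT^{\<i\>}$, and then combining the resulting inequalities via a Cauchy--Schwarz argument weighted by the associated vector $\hb$. First I would fix $\vb,\wb\in\Rb^\ar$ with $\langle\wb,\bM\wb\rangle>0$ and, using regularity of $\vf$, reduce to the case $\supp(\bM)=[\ar]$ (since \eqref{eq:Hyp} depends only on the support, and by \eqref{eq:Irr} the matrix restricted to its support is irreducible). The key idea is to set $\vb_i:=\bT^{\<i\>}\vb$ and $\wb_i:=\bT^{\<i\>}\wb$ in $\Rb^\ar$, so that the inheritance property \eqref{eq:Inh} rewrites the coordinates of $\bM\vb$ and $\bM\wb$ as bilinear pairings $\langle\vb_i,\bM^{\<i\>}\bT^{\<i\>}\hb\rangle$ against the fixed vector $\bT^{\<i\>}\hb$.

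The heart of the argument: using \eqref{eq:Inh} twice, write
\[
\langle\wb,\bM\vb\rangle \ = \ \sum_{i\in[\ar]} \aw_i\,(\bM\vb)_i \ = \ \sum_{i\in[\ar]} \aw_i\,\langle \bT^{\<i\>}\vb,\ \bM^{\<i\>}\,\bT^{\<i\>}\hb\rangle,
\]
and similarly for $\langle\vb,\bM\vb\rangle$ and $\langle\wb,\bM\wb\rangle$, so that after a short manipulation each of the three quantities $\langle\vb,\bM\vb\rangle$, $\langle\vb,\bM\wb\rangle$, $\langle\wb,\bM\wb\rangle$ is expressed as a $\hb$-weighted sum over $i$ of the pairings $\langle\bT^{\<i\>}\vb,\bM^{\<i\>}\bT^{\<i\>}\wb\rangle$, $\langle\bT^{\<i\>}\vb,\bM^{\<i\>}\bT^{\<i\>}\vb\rangle$, $\langle\bT^{\<i\>}\wb,\bM^{\<i\>}\bT^{\<i\>}\wb\rangle$. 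For the last of these I would invoke the pullback property \eqref{eq:Pull}: $\langle\wb,\bM\wb\rangle\le\sum_i \ah_i\langle\bT^{\<i\>}\wb,\bM^{\<i\>}\bT^{\<i\>}\wb\rangle$. Now apply the hyperbolicity of each out-neighbor $\vf^{\<i\>}$ to the pair $(\bT^{\<i\>}\vb,\bT^{\<i\>}\wb)$ (treating separately the indices where $\langle\bT^{\<i\>}\wb,\bM^{\<i\>}\bT^{\<i\>}\wb\rangle=0$, for which the corresponding pairing with $\bT^{\<i\>}\vb$ also vanishes by the hyperbolic inequality in the degenerate direction, so those terms drop out), giving
\[
\langle\bT^{\<i\>}\vb,\bM^{\<i\>}\bT^{\<i\>}\wb\rangle^2 \ \ge\ \langle\bT^{\<i\>}\vb,\bM^{\<i\>}\bT^{\<i\>}\vb\rangle\,\langle\bT^{\<i\>}\wb,\bM^{\<i\>}\bT^{\<i\>}\wb\rangle.
\]
Then a Cauchy--Schwarz inequality in the form $\bigl(\sum_i \ah_i a_i b_i\bigr)^2 \le \bigl(\sum_i \ah_i a_i^2\bigr)\bigl(\sum_i \ah_i b_i^2\bigr)$, applied with $a_i,b_i$ the square roots of the diagonal pairings (legitimate since the diagonals of $\bM^{\<i\>}$ are nonnegative and $\ah_i\ge0$), combines everything to yield $\langle\vb,\bM\wb\rangle^2\ge\langle\vb,\bM\vb\rangle\langle\wb,\bM\wb\rangle$.

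I would expect the main obstacle to be the bookkeeping around degeneracies and positivity: specifically, making rigorous the reduction to full support, handling the indices $i$ where the inner matrix $\bM^{\<i\>}$ is degenerate on $\bT^{\<i\>}\wb$, and ensuring that when $\langle\vb,\bM\vb\rangle$ itself may be negative the inequality still goes the right way (the hyperbolic inequality \eqref{eq:Hyp} is only assumed when the second argument is $\bM$-positive, so one needs the irreducibility condition \eqref{eq:Irr} together with the strict positivity \eqref{eq:hPos} of $\hb$ and $\bM\hb$ to guarantee, e.g., that $\langle\hb,\bM\hb\rangle>0$ and to propagate positivity through the $\bT^{\<i\>}$). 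This is exactly where regularity of $\vf$ enters, and carefully tracking it is the delicate part; once the positivity is secured, the chain Inheritance $\to$ out-neighbor hyperbolicity $\to$ Cauchy--Schwarz $\to$ Pullback is essentially forced.
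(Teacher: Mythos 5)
Your overall architecture (restrict to the support, use \eqref{eq:Inh} to transport the form to the out-neighbors, invoke their hyperbolicity, recombine via \eqref{eq:Pull}) resembles the paper's in spirit, but the recombination step as you describe it does not work, and this is exactly where the paper does something genuinely different. First, the ``short manipulation'' you invoke is unavailable: \eqref{eq:Inh} gives
\[
\<\wb,\bM\vb\> \ = \ \sum_{i} \aw_i\,\big\<\bT^{\<i\>}\vb,\ \bM^{\<i\>}\,\bT^{\<i\>}\hb\big\>,
\]
a sum weighted by the components of $\wb$ in which the second slot is always the \emph{fixed} vector $\bT^{\<i\>}\hb$; nothing in \eqref{eq:Inh} or \eqref{eq:Pull} (the latter being only a one-sided inequality for the quadratic form) lets you rewrite this as $\sum_i \ah_i\<\bT^{\<i\>}\vb,\bM^{\<i\>}\bT^{\<i\>}\wb\>$. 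Second, and more fundamentally, the termwise Cauchy--Schwarz recombination both lacks real ingredients and points the wrong way: the quantities $\<\bT^{\<i\>}\vb,\bM^{\<i\>}\bT^{\<i\>}\vb\>$ are values of hyperbolic, hence typically indefinite, quadratic forms (nonnegative diagonal \emph{entries} do not make the form nonnegative), so their square roots $a_i$ need not exist; and even when all terms are nonnegative, Cauchy--Schwarz bounds $\bigl(\sum_i\ah_i a_ib_i\bigr)^2$ from \emph{above} by $\bigl(\sum_i\ah_i a_i^2\bigr)\bigl(\sum_i\ah_i b_i^2\bigr)$, whereas you need a lower bound on the square of the cross term. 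No purely termwise summation can succeed: a nonnegative combination of hyperbolic forms is in general not hyperbolic (e.g.\ $x^2+4xy+y^2$ and $x^2-4xy+y^2$ each satisfy \eqref{eq:OPE}, but their sum $2x^2+2y^2$ is positive definite).

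The missing idea is spectral, and it is where regularity is really consumed. The paper sets $\aD_{ii}:=(\bM\hb)_i/\ah_i>0$ (using \eqref{eq:hPos}), passes to $\bN:=\bD^{-1}\bM$, which is self-adjoint for the inner product $\<\cdot,\cdot\>_{\bD}$ and has $\hb$ as an eigenvector with eigenvalue $1$; by \eqref{eq:Irr} and the Perron--Frobenius theorem this is the largest eigenvalue and it is simple. Hyperbolicity of the out-neighbors is applied only with second argument $\bT^{\<i\>}\hb$ (legitimate since $\<\bT^{\<i\>}\hb,\bM^{\<i\>}\bT^{\<i\>}\hb\>=(\bM\hb)_i>0$), which together with \eqref{eq:Pull} yields the single inequality $\<\bN\vb,\bN\vb\>_{\bD}\ge\<\vb,\bN\vb\>_{\bD}$ for all $\vb$; evaluating on eigenvectors forces every eigenvalue to satisfy $\lambda\ge1$ or $\lambda\le0$, so $\bN$ satisfies \eqref{eq:OPE}, and Lemma~\ref{l:Hyp is OPE} converts this back into \eqref{eq:Hyp} for $\bM$. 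Your proposal never uses $\hb$ as a distinguished positive eigendirection nor the equivalence between \eqref{eq:Hyp} and \eqref{eq:OPE}; without these the argument cannot be completed.
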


\smallskip

Theorem~\ref{t:Hyp} reduces checking the property \eqref{eq:Hyp} to sink vertices \ts $v\in \Vfm$.
In our applications, the pullback property \eqref{eq:PullEq} is more involved than the inheritance
property~\eqref{eq:Inh}.  Below, in  Theorem~\ref{t:Pull}, we give sufficient conditions for
\eqref{eq:PullEq} that are easier to establish.

\smallskip

\subsection{Eigenvalue interpretation of hyperbolicity}
The following lemma  gives two equivalent conditions to~\eqref{eq:Hyp}
that is often easier to check.
A symmetric matrix $\bM$ satisfies \eqref{eq:NDC} \ts if
\begin{equation}\label{eq:NDC}
	\tag{NDC} \text{There exists \. $\gb  \in \Rb^{\ar}$ \ s.t.\ \qquad  $\forall$ \ts
$\vb \in \Rb^{\ar}$, \. $\< \vb, \bM \gb \> = 0$ \qquad $\Longrightarrow$ \qquad $\< \vb, \bM \vb \>\leq 0$. }
\end{equation}
Here~\eqref{eq:NDC} stands for \defng{negative semi-definite in the complement}.
This condition does not appear in~\cite{CP}, and is needed here
for a step in Alexandrov--Fenchel inequality.

A symmetric matrix $\bM$ satisfies \eqref{eq:OPE} if
\begin{equation}\label{eq:OPE}\tag{OPE}
	\text{$\bM$ \. has at most \. \defng{one positive eigenvalue} \. (counting multiplicity).}
\end{equation}
The equivalence between these three properties are well-known in the literature, see e.g.\
\cite{Gre}, \cite[Thm~5.3]{COSW}, \cite[Lem.~2.9]{SvH19}  and \cite[Lem.~2.5]{BH}.
We present a short proof for completeness; we follow \cite[Lem.~5.3]{CP} in our presentation.

\smallskip

	\begin{lemma}\label{l:Hyp is OPE}
	Let \ts {\rm $\bM$} \ts be a self-adjoint  operator on $\Rb^{\ar}$ for an inner product $\langle \cdot, \cdot \rangle$.
	Then:
	\[  	\text{\ts {\rm $\bM$} \ts satisfies \eqref{eq:Hyp} \ts} \ \Longleftrightarrow  \  \text{{\rm $\bM$} \ts satisfies \eqref{eq:NDC}}
	\ \Longleftrightarrow  \  \text{{\rm $\bM$} \ts satisfies \eqref{eq:OPE}}. \]
\end{lemma}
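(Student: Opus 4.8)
The plan is to prove the chain of equivalences by first establishing the easy implications and then the harder spectral one, exploiting the fact (noted right after the definition of \eqref{eq:Hyp}) that all three properties are insensitive to the support of $\bM$ — so I may restrict to the subspace $W := \supp(\bM) \subseteq \Rb^{\ar}$ on which $\bM$ acts as a self-adjoint operator with no kernel coming from zero rows. First I would show \eqref{eq:OPE} $\Longrightarrow$ \eqref{eq:NDC}. Assuming $\bM$ has at most one positive eigenvalue, diagonalize $\bM$ in an orthonormal eigenbasis $\{\ub_k\}$ with eigenvalues $\lambda_1 \ge \lambda_2 \ge \cdots$, where $\lambda_2, \lambda_3, \ldots \le 0$. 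If $\lambda_1 \le 0$ then $\bM$ is negative semidefinite and any $\gb$ (say $\gb = \0$) works. If $\lambda_1 > 0$, take $\gb := \ub_1$; then for any $\vb$ with $\langle \vb, \bM\gb\rangle = \lambda_1 \langle \vb, \ub_1 \rangle = 0$ we get $\langle \vb, \ub_1\rangle = 0$, so $\vb$ lies in the span of the nonpositive eigenvectors and $\langle \vb, \bM\vb\rangle = \sum_{k\ge 2}\lambda_k \langle \vb,\ub_k\rangle^2 \le 0$. This gives \eqref{eq:NDC}.

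Next I would show \eqref{eq:NDC} $\Longrightarrow$ \eqref{eq:Hyp}. Fix $\gb$ as in \eqref{eq:NDC} and let $\vb, \wb$ be arbitrary with $\langle \wb, \bM\wb\rangle > 0$. The idea is the standard ``Cauchy--Schwarz via a quadratic form of signature $(1,\cdot)$'' argument: if $\langle \gb, \bM\gb \rangle > 0$ one can renormalize and reduce to $\wb$ itself playing the role of the positive direction, but cleanly it is easier to argue directly. For any real $t$, set $\zb := \vb - t\wb$; I want to choose $t$ so that $\langle \zb, \bM\gb\rangle$ relates to the quadratic form. Actually the slick route: first check that $\langle \wb, \bM\wb\rangle > 0$ forces $\langle \wb, \bM\gb\rangle \neq 0$ — otherwise \eqref{eq:NDC} applied to $\vb := \wb$ gives $\langle \wb, \bM\wb\rangle \le 0$, a contradiction. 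Now pick $t := \langle \vb, \bM\gb\rangle / \langle \wb, \bM\gb\rangle$ so that $\langle \vb - t\wb, \bM\gb\rangle = 0$; then \eqref{eq:NDC} gives $\langle \vb - t\wb, \bM(\vb - t\wb)\rangle \le 0$, i.e. $\langle \vb, \bM\vb\rangle - 2t\langle \vb, \bM\wb\rangle + t^2 \langle \wb, \bM\wb\rangle \le 0$. Separately, applying \eqref{eq:NDC} to $\wb' := \langle \wb, \bM\gb\rangle\, \vb - \langle \vb, \bM\gb\rangle\, \wb$ — wait, that is the same vector up to scaling — I instead also need a bound in the other direction; the cleanest finish is to view the last inequality as saying the quadratic $q(s) = \langle \vb - s\wb, \bM(\vb - s\wb)\rangle$ takes a value $\le 0$ at $s = t$ while $q$ has positive leading coefficient $\langle \wb,\bM\wb\rangle$, hence its discriminant is nonnegative: $4\langle \vb,\bM\wb\rangle^2 - 4\langle\wb,\bM\wb\rangle\langle\vb,\bM\vb\rangle \ge 0$, which is exactly \eqref{eq:Hyp}.

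Finally I would close the loop with \eqref{eq:Hyp} $\Longrightarrow$ \eqref{eq:OPE}. Suppose for contradiction that $\bM$ has two positive eigenvalues, with orthonormal eigenvectors $\ub_1, \ub_2$ and eigenvalues $\lambda_1, \lambda_2 > 0$. I then look for $\vb, \wb$ in $\mathrm{span}(\ub_1, \ub_2)$ violating \eqref{eq:Hyp}: take $\wb := \ub_1$, so $\langle \wb, \bM\wb\rangle = \lambda_1 > 0$, and $\vb := \ub_2$, giving $\langle \vb, \bM\wb\rangle = 0$ but $\langle \vb, \bM\vb\rangle\langle \wb, \bM\wb\rangle = \lambda_1\lambda_2 > 0$, contradicting \eqref{eq:Hyp}. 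This completes the cycle \eqref{eq:OPE} $\Rightarrow$ \eqref{eq:NDC} $\Rightarrow$ \eqref{eq:Hyp} $\Rightarrow$ \eqref{eq:OPE}. The main obstacle is the middle implication \eqref{eq:NDC} $\Rightarrow$ \eqref{eq:Hyp}: one has to handle the degenerate cases (when $\langle\wb,\bM\gb\rangle$ or $\langle\vb,\bM\gb\rangle$ vanishes, or when the relevant quadratic is not genuinely quadratic) and make sure the discriminant argument is applied to a polynomial with a strictly positive leading coefficient — that is where the hypothesis $\langle \wb, \bM\wb\rangle > 0$ is essential. Everything else is routine linear algebra using the spectral theorem for self-adjoint operators.
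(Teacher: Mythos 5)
Your proposal is correct and follows essentially the same route as the paper: the cycle \eqref{eq:OPE} $\Rightarrow$ \eqref{eq:NDC} $\Rightarrow$ \eqref{eq:Hyp} $\Rightarrow$ \eqref{eq:OPE}, with $\gb$ taken to be the top eigenvector, the auxiliary vector $\vb - t\wb$ with $t = \langle\vb,\bM\gb\rangle/\langle\wb,\bM\gb\rangle$, and orthonormal eigenvectors for the last implication. The only (immaterial) difference is that you finish the middle implication by a discriminant argument on the quadratic $q(s)$, where the paper expands $\langle\zb,\bM\zb\rangle$ directly and applies AM--GM; your worry about degenerate cases is unfounded since $\langle\wb,\bM\gb\rangle\neq 0$ and $\langle\wb,\bM\wb\rangle>0$ already rule them out.
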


\smallskip

\begin{proof}
	If $\bM$ is a negative semidefinite matrix, then the conclusion is trivial.
	Thus we assume that $\bM$ has a positive eigenvalue $\lambda_1 >0$, which we assume to be the largest eigenvalue.
	
For the \ts  \eqref{eq:OPE} \ts $\Rightarrow$ \ts \eqref{eq:NDC} \ts direction,
let \ts $\gb$ \ts be an eigenvector of~$\lambda_1$.
Note that \. $\<\gb, \bM \gb\>  = \lambda_1 \<\gb,\gb\> >0$.
	Then, for every \. $\vb \in \Rb^{\ar}$ \. such that
\. $\<\vb, \bM \gb\>  = 0$, we have
\[ \<\vb, \bM \vb\>  \  \leq \ \lambda_2 \sqrt{|\<\vb,  \vb\>|},\]
	where $\lambda_2$ is the second largest eigenvalue of $\bM$.
	Note that $\lambda_2 \leq 0$ by \eqref{eq:OPE}, and it then follows that the right side of the equation above is non-negative.
	This proves \eqref{eq:NDC}.

We now prove \ts  \eqref{eq:NDC} \ts $\Rightarrow$ \ts \eqref{eq:Hyp} \ts direction.
Since \. $\langle \wb, \bM \wb \rangle > 0$,
it then follows from
  \eqref{eq:NDC} that \. $\langle \wb, \bM \gb \rangle   \neq   0$.
  Let \ts $\zb \in \Rb^{\ar}$ \ts be the vector
  \[ \zb \ := \  \vb  \ - \ \frac{\langle \vb, \bM \gb \rangle}{\langle \wb, \bM \gb \rangle} \, \wb.\]
It follows that $\langle \zb, \bM \gb \rangle   =0 $.
By \eqref{eq:NDC}, this implies that
	 \. $\langle \zb, \bM \zb \rangle  \leq  0$.
Now note that
\begin{align*}
0 \ \geq \ 	\langle \zb, \bM \zb \rangle \ & =  \ \langle \vb, \bM \vb \rangle  \ - \ 2 \. \frac{\langle \vb, \bM \gb \rangle \, \langle \vb, \bM \wb \rangle}{\langle \wb, \bM \gb \rangle}  \ + \  \frac{\langle \vb, \bM \gb \rangle^2 \, \langle \wb, \bM \wb \rangle}{\langle \wb, \bM \gb \rangle^2} \\
	& \geq \ \langle \vb, \bM \vb \rangle \ - \  \frac{\langle \vb, \bM \wb \rangle^2}{\langle \wb, \bM \wb \rangle}\.,
\end{align*}	
where the last inequality is due to the AM--GM inequality.
This proves \eqref{eq:Hyp}, as desired.

For the \ts  \eqref{eq:Hyp} \ts $\Rightarrow$ \ts \eqref{eq:OPE} \ts direction,
suppose to the contrary that $\bM$ has eigenvalues \ts $\lambda_1, \lambda_2>0$ (not necessarily distinct).
Let $\vb$ and $\wb$ be  orthonormal eigenvectors of $\bM$  for $\lambda_1$ and $\lambda_2$, respectively.	
It then follows that
\begin{align*}
	0  \ = \ \langle \vb, \bM \wb \rangle \quad \text{ and } \quad \langle \vb, \bM \vb \rangle \, \langle \wb, \bM \wb \rangle \ = \  \lambda_1  \lambda_2\.,
\end{align*}
which contradicts \eqref{eq:Hyp}.
\end{proof}

\medskip

\subsection{Proof of Theorem~\ref{t:Hyp}}
Let \ts $\bM:=\bM_{\vf}$ \ts and \ts $\hb:=\hb_{\vf}$ \ts be the associated matrix and the associated vector of $\vf$, respectively.
	Since \eqref{eq:Hyp} is a property that is invariant under restricting to the support of $\bM$,
	it follows from \eqref{eq:Irr} that we can assume that $\bM$ is irreducible.
	
	Let  \ts $\bD:= (\aD_{ij})$  \ts be the \ts $\ar \times \ar$ \ts diagonal matrix given by
\[   \aD_{ii} \ := \   \frac{(\bM \hb)_{i}}{\ah_i} \qquad \text{ for every } \ 1\le i \le \ar\ts.
\]
	Note that $\bD$ is well defined and  	\. $\aD_{i i} >0$,  by \eqref{eq:hPos} and the assumption that $\bM$ is irreducible.
	%	Write \ts $\bE:=\bD^{-1}$.
	Define a new inner product \ts $\langle \cdot, \cdot \rangle_{\bD}$ \ts on \ts $\Rb^{\ar}$ \ts by
	\.
	$ \langle \vb, \wb \rangle_{\bD} \ := \  \langle \vb, \bD \wb\rangle$\..
	
	Let \. $\bN := \bD^{-1} \bM$\ts.
	Note that 	\.
	$ \langle \vb, \bN \wb \rangle_{\bD}  =   \langle \vb, \bM \wb\rangle$ \. for every $\vb,\wb \in \Rb^{\ar}$.
	Since $\bM$ is a symmetric matrix,
	this implies that  $\bN$ is  a self-adjoint operator on $\Rb^{\ar}$ for the inner product $\langle \cdot, \cdot \rangle_{\bD}$\..
	A direct calculation shows that	 \ts $\hb$ \ts is an eigenvector of \ts $\bN$ \ts for eigenvalue \ts $\lambda=1$.
	Since $\bM$ is irreducible matrix and $\hb$ is a strictly positive vector,
	it then follows from the  Perron--Frobenius theorem that \ts $\lambda=1$ \ts is the
    largest real eigenvalue of $\bN$, and that it has multiplicity one.	
	
\medskip

\nin
\textbf{Claim:} \ {\em $\lambda=1$ \. is the only positive eigenvalue of \ts $\bNr$ \ts $($counting multiplicity$)$.}

\medskip

	Applying Lemma~\ref{l:Hyp is OPE} to the matrix $\bN$ and the inner product $\langle \cdot, \cdot \rangle_{\bD}$\., we have:
	\[ \langle \vb, \bN \wb \rangle_{\bD}^2 \  \geq \  \langle \vb, \bN \vb \rangle_{\bD} \  \langle \wb, \bN \wb \rangle_{\bD} \quad   \text{ for every } \vb, \wb \in \Rb^{\ar}, \ \  \text{such that \ \ $\langle \wb, \bM \wb \rangle > 0$}. \]
	Since 	\.
	$ \langle \vb, \bN \wb \rangle_{\bD}  =   \langle \vb, \bN \wb\rangle$, this implies \eqref{eq:Hyp} for~$\vf$, and completes the proof
of the theorem.
\qed

\medskip

\begin{proof}[Proof of the Claim]		
		Let $i \in [\ar]$
		 and $\vb \in \Rb^{\ar}$.		
		It follows from \eqref{eq:Inh} that
			\begin{align}\label{eq:general 0}
			\big((\bM \vb)_{i}\big)^2 \ = \  \big \langle    \vbw{i}, \, \bM^{\<i\>} \, \hbw{i}  \big \rangle^2.
		\end{align}
		Since \ts $\bM^{\<i\>}$ \ts satisfies \eqref{eq:Hyp} by the assumption of the theorem, applying \eqref{eq:Hyp} to the RHS of \eqref{eq:general 0} gives:
		\begin{align}\label{eq:general 1}
			\big((\bM \vb)_{i}\big)^2 \   \geq \  \big \langle   \vbw{i}, \, \bM^{\<i\>} \, \vbw{i}  \big \rangle  \,  \big \langle   \hbw{i}, \, \bM^{\<i\>} \, \hbw{i}  \big \rangle,
		\end{align}
Here \eqref{eq:Hyp} can be applied since
		\. $\big \langle   \hbw{i}, \, \bM^{\<i\>} \, \hbw{i}  \big \rangle \. = \. (\bM \hb)_{i} >0 $.
		Now note that
		\begin{align*}
			\big((\bN \vb)_{i}\big)^2 \, \aD_{i i} \ &= \  \big((\bM \vb)_{i}\big)^2  \, \frac{\ah_{i}}{(\bM \hb)_{i}} \ =_{\eqref{eq:Inh}}  \ \big((\bM \vb)_{i}\big)^2  \,  \frac{\ah_{i}}{\big \langle   \hbw{i}, \, \bM^{\<i\>} \, \hbw{i}  \big \rangle } \\
			& \geq_{\eqref{eq:general 1}} \ \ah_{i} \, \big \langle   \vbw{i}, \, \bM^{\<i\>} \, \vbw{i}  \big \rangle.
		\end{align*}
		Summing this inequality over all \ts $i \in [\ar]$, gives:
		\begin{align}\label{eq:general 2}
			\langle \bN \vb, \bN \vb \rangle_{\bD} \ \geq \  \sum_{i=1}^r  \ah_{i} \,   \big \langle   \vbw{i}, \, \bM^{\<i\>} \, \vbw{i}  \big \rangle \ \geq_{\eqref{eq:Pull}}
			\ \langle \vb, \bM \vb \rangle  \ = \  \langle  \vb, \bN \vb \rangle_{\bD}\..
		\end{align}
		Now, let $\lambda$ be an arbitrary eigenvalue of $\bN$,
		and let
		$\gb$ be an eigenvector of  $\lambda$.
		We have:
		\begin{align*}
			\lambda^2  \langle  \gb,  \gb \rangle_{\bD} \ = \  	\langle \bN \gb, \bN \gb \rangle_{\bD}
			\ \geq_{\eqref{eq:general 2}} \ 	 \langle  \gb, \bN \gb \rangle_{\bD}  \ = \ \lambda \. \langle  \gb,  \gb \rangle_{\bD}\..	\end{align*}
		This implies that $\lambda\geq 1$ or $\lambda \leq 0$.
		Since $\lambda =1$ is the largest eigenvalue of \ts $\bN$ \ts and is simple, we obtain the result.
	\end{proof}

\smallskip

\begin{rem}\label{r:Hyp-claim}
In the proof above, neither the Claim nor the proof of the Claim are new,
but a minor revision of Theorem 5.2 in~\cite{SvH19}.  We include the proof
for completeness and to help the reader get through our somewhat cumbersome
notation.  \end{rem}

\medskip

\subsection{Pullback equality property}\label{ss:Pull}
Here we present a sufficient condition for \eqref{eq:PullEq} that is easier to verify.
This condition is a more restrictive version of the sufficient conditions for \eqref{eq:Pull} in \cite[\S6]{CP}.
We also remark that this condition applies to atlases in Sections~\ref{s:matroid} and~\ref{s:Lorentzian},
but does not apply to atlases in Section~\ref{s:AF ineq}.

Let $\AA$ be a combinatorial atlas.  We say that $\AA$ satisfies the
\defng{identity property}, if for every non-sink vertex \ts
$\vf\in \Vfp$ \ts and every \ts $i \in \supp(\bM)$, we have:
\begin{equation}\label{eq:Iden}\tag{Iden}
	\bT^{\<i\>}: \Rb^{\ar} \to \Rb^{\ar} \ \text{ is the identity mapping.}
\end{equation}	
We say that $\AA$ satisfies the \defng{transposition-invariant property},  if for every non-sink vertex \ts
$\vf\in \Vfp$,
\begin{equation}\label{eq:TPInv}\tag{T-Inv}
	\aM_{jk}^{\<i\>}	\ = \  \aM_{ki}^{\<j\>}	 \ = \ \aM_{ij}^{\<k\>} \qquad 	\text{ for every  $i, j, k \in \supp(\bM)$.}
\end{equation}

We say that $\AA$ has the \defng{decreasing support property}, if for every non-sink vertex \ts $\vf \in \Vfp$,
\begin{equation}\label{eq:Decsupp}
	\tag{DecSupp}
	\supp(\bM) \ \supseteq \ \supp\big(\bM^{\<i\>}\big) \quad \text{ for every $i \in \supp(\bM)$.}
\end{equation}

\smallskip

\begin{rem}
Note that there is a small difference from \eqref{eq:TPInv} in \cite[$\S$6.1]{CP}, namely that in~\cite{CP} the condition only applies to \emph{distinct} \ts $i,j,k$.  Note also that  \eqref{eq:Decsupp} is a new property, that was not defined in~\cite{CP}.
\end{rem}

\smallskip

\begin{thm}[{\rm cf.~\cite[Thm~6.1]{CP}}{}]
\label{t:Pull}
	Let \ts $\AA$ \ts be a combinatorial atlas that satisfies \eqref{eq:Inh}, \eqref{eq:Iden}, \eqref{eq:TPInv} and \eqref{eq:Decsupp}.
	Then  \ts $\AA$ \ts also satisfies  \eqref{eq:PullEq}.
\end{thm}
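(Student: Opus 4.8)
The plan is to establish \eqref{eq:PullEq} directly by expanding both sides in coordinates and checking that the quadratic forms in $\vb$ agree term by term. Fix a non-sink vertex $\vf \in \Vfp$, write $\bM = \bM_\vf$, $\hb = \hb_\vf$, and for $i \in \supp(\bM)$ write $\bM^{\<i\>} = \bM_{\vf^{\<i\>}}$. By \eqref{eq:Iden}, each $\bT^{\<i\>}$ is the identity, so the left-hand side of \eqref{eq:PullEq} simplifies to $\sum_{i \in \supp(\bM)} \ah_i \, \langle \vb, \bM^{\<i\>} \vb \rangle = \sum_{i \in \supp(\bM)} \ah_i \sum_{j,k} \aM^{\<i\>}_{jk} \av_j \av_k$. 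First I would observe that by \eqref{eq:Decsupp}, whenever $\aM^{\<i\>}_{jk} \neq 0$ we automatically have $j, k \in \supp(\bM)$, so every index appearing in this triple sum lies in $\supp(\bM)$; this lets me treat all three indices $i, j, k$ on an equal footing and freely apply \eqref{eq:TPInv}.

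The key computation is then: for fixed $j, k \in \supp(\bM)$, the coefficient of $\av_j \av_k$ on the left-hand side is $\sum_{i \in \supp(\bM)} \ah_i \, \aM^{\<i\>}_{jk}$. Using \eqref{eq:TPInv} to rewrite $\aM^{\<i\>}_{jk} = \aM^{\<k\>}_{ij} = \aM^{\<k\>}_{ji}$ (the last by symmetry of the associated matrices), this coefficient becomes $\sum_{i \in \supp(\bM)} \ah_i \, \aM^{\<k\>}_{ji} = (\bM^{\<k\>} \hb)_j$ — here one must note that only $i \in \supp(\bM)$ contribute, but by \eqref{eq:Decsupp} the remaining indices have $\aM^{\<k\>}_{ji} = 0$, so the sum is genuinely the full matrix-vector product $(\bM^{\<k\>}\hb)_j$. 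Now apply \eqref{eq:Inh} with $i$ replaced by $k$ and $\vb$ replaced by $\hb$ (and using \eqref{eq:Iden}): this gives $(\bM^{\<k\>}\hb)_j = $ the $j$-th coordinate of... wait — more precisely, \eqref{eq:Inh} reads $(\bM \vb)_k = \langle \bT^{\<k\>} \vb, \bM^{\<k\>} \bT^{\<k\>} \hb \rangle = \langle \vb, \bM^{\<k\>} \hb \rangle$, valid for $k \in \supp(\bM)$ and all $\vb$; taking $\vb = \eb_j$ yields $(\bM^{\<k\>}\hb)_j = \aM_{jk}$. Hence the coefficient of $\av_j \av_k$ on the left-hand side is exactly $\aM_{jk}$, which is precisely the coefficient of $\av_j \av_k$ in $\langle \vb, \bM \vb \rangle$ on the right-hand side (for indices outside $\supp(\bM)$, both sides have coefficient zero). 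Summing over all $j, k$ gives \eqref{eq:PullEq}.

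I expect the main obstacle to be bookkeeping rather than conceptual: one must be careful that every index manipulation stays inside $\supp(\bM)$ so that \eqref{eq:TPInv} and the matrix-product identifications are legitimate, and one must correctly match the diagonal ($j = k$) and off-diagonal contributions without a spurious factor of two (symmetry of $\bM$ and of each $\bM^{\<i\>}$ handles this cleanly if one works with the full double sum $\sum_{j,k}$ rather than $\sum_{j \le k}$). A secondary point worth stating explicitly is why \eqref{eq:Inh} may be applied with $\vb = \hb$: it holds for \emph{every} $\vb \in \Rb^\ar$, so there is no issue, but it is the step that injects the original matrix $\bM$ back into the identity and deserves a sentence. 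Once these are in place, the proof is a short chain of equalities with each step justified by exactly one of the four hypotheses.
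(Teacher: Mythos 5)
Your proposal is correct and is essentially the paper's own argument: both expand the quadratic forms in coordinates, use \eqref{eq:Iden} and \eqref{eq:Decsupp} to confine all indices to $\supp(\bM)$, cycle the indices via \eqref{eq:TPInv}, and invoke \eqref{eq:Inh} to reintroduce $\bM$. The only difference is organizational — you compare the coefficient of $\av_j\av_k$ for each fixed pair, while the paper matches the two full triple sums under the substitution $i'\gets j$, $j'\gets k$, $k'\gets i$ — and your handling of the support bookkeeping is accurate.
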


\smallskip

\begin{proof}
	Let $\vf$ be a non-sink vertex of $\Qf$,
	and let $\vb \in \Rb^{\ar}$.
	The LHS of \eqref{eq:PullEq} is equal to
	\begin{align*}
		\sum_{i \ts\in\ts \supp(\bM)} \ah_{i} \, \big\langle \vbw{i}, \bM^{\<i\>} \vbw{i} \big\rangle  \
		= \ \sum_{i \ts\in\ts \supp(\bM)} \ \sum_{j, \ts k \ts\in\ts \supp(\bM^{\<i\>}) } \ah_{i} \, \big(\vbw{i}\big)_j \, \big(\vbw{i}\big)_k \, \aM^{\<i\>}_{jk}\..
	\end{align*}
	By \eqref{eq:Iden} and \eqref{eq:Decsupp}, this gives:
	\begin{align}\label{eq:families left}
		\sum_{i \ts\in\ts \supp(\bM)} \ah_{i} \, \big\langle \vbw{i}, \bM^{\<i\>} \vbw{i} \big\rangle  \
		= \ \sum_{i, \ts j, \ts k \ts\in\ts \supp(\bM)}  \ah_{i} \, \av_j \, \av_k \, \aM^{\<i\>}_{jk}\..
	\end{align}
	
	On the other hand, the RHS of \eqref{eq:PullEq} is equal to
	\begin{align*}
		\langle \vb, \bM \vb \rangle  \ &= \  \sum_{i' \ts\in\ts \supp(\bM)} \av_{i'} \, (\bM \vb)_{i'}  \ =_{\eqref{eq:Inh}} \  \sum_{i' \ts\in\ts \supp(\bM)} \av_{i'} \, \big \langle   \vbw{i'}, \, \bM^{\<i'\>} \,   \hbw{i'}  \big \rangle  \notag \\
%		\label{eq:families right}
	\ &= \ \sum_{i' \ts\in\ts \supp(\bM)} \ \sum_{j', \ts k' \ts\in\ts\supp\left(\bM^{\<i'\>}\right)}  \av_{i'} \, \big(\vbw{i'}\big)_{j'} \, \big(\hbw{i'}\big)_{k'} \, \aM^{\<i'\>}_{j'k'}\..
	\end{align*}
	By \eqref{eq:Iden} and \eqref{eq:Decsupp}, this gives:
	\begin{align}\label{eq:families right}
		\langle \vb, \bM \vb \rangle  \ = \ \sum_{i',j',k' \ts\in\ts \supp(\bM)}  \av_{i'} \, \av_{j'} \, \ah_{k'} \, \aM^{\<i'\>}_{j'k'}\..
	\end{align}
	Let us show that each term in the RHS of~\eqref{eq:families left} is equal to that of RHS of~\eqref{eq:families right}
after the substitution \. $i' \gets j$, $j' \gets k$, $k' \gets i$.  Indeed, we have:
	\[  \ah_i \, \av_j  \, \av_k  \, \aM^{\<i\>}_{jk} \ =_{\eqref{eq:TPInv}} \  \ah_i \, \av_j  \, \av_k  \, \aM^{\<j\>}_{ki}  \ = \   \av_{i'} \, \av_{j'}  \, \ah_{k'} \, \aM^{\<i'\>}_{j'k'}\..  \]
	This implies that the LHS of~\eqref{eq:families left} is equal to the LHS of~\eqref{eq:families right}, as desired.
\end{proof}

\bigskip

\section{Log-concavity for matroids}\label{s:matroid}

\subsection{Log-concavity of independent sets}
A (finite) \defna{matroid}~$\Mf$ \ts is a pair \ts $(X,\Ic)$ \ts of a
\defnb{ground set} \ts $X$, \ts $|X|=n$, and a nonempty collection of \defnb{independent sets}
\ts $\Ic \subseteq 2^X$ \ts that satisfies the following:
\begin{itemize}
	%	\item \, $\emp\in \Ic$\ts,
	\item (\defng{hereditary property}) \, $S\subset T$, \. $T\in \Ic$ \, $\Rightarrow$ \, $S\in \Ic$\ts, and
	\item (\defng{exchange property}) \,  $S, \ts T\in \Ic$, \.  $|S|<|T|$ \, $\Rightarrow$ \,
	$\exists \ts x \in T \setminus S$ \. s.t.\ $S+x \in \Ic$\ts.
\end{itemize}
\defnb{Rank} of a matroid is the maximal size of the independent set: \ts $\rk(\Mf) := \max_{S\in \Ic} \ts |S|$.
A \defnb{basis} of a matroid is an independent set of size \ts $\rk(\Mf)$.  Finally,
let \ts $\cI_k \ts := \ts \bigl\{S\in \cI, \. |S|=k\bigr\}$,
and let \ts $\rI(k)=\bigl|\cI_k\bigr|$ \ts be the \defn{number of independent sets}
in \ts $\Mf$ \ts of size~$k$, \ts $0\le k \le \rk(\Mf)$.

\smallskip

We assume the reader is familiar with basic ideas of matroids, even though we
will not be using any properties other than the definitions.  The reader
unfamiliar with matroids can always assume that the matroid~$\Mf$ is given by
a set of vectors \ts $X \in \mathbb{K}^d$, with linearly independent subsets \.
$S \subseteq X$ \. being independent sets of the matroid: \ts $S\in \Ic$.

\smallskip

In this section we give a new proof of the ultra-log-concavity conjecture of Mason~\cite{Mas}.
We start with a weaker version below.

\smallskip

\begin{thm}[{\rm  \defng{Log-concavity for matroids}, \cite[Cor.~9]{HSW}, formerly \defna{weak Mason conjecture}}{}]
	\label{t:matroids-HSW}
	For a matroid \ts $\Mf=(X,\Ic)$,  \ts $|X|=n$,  \ts and integer \ts $1\le k < \rk(\Mf)$, we have:
	\begin{equation}\label{eq:matroid-ULC}
		\rI(k)^2 \, \ge \, \left(1 \. + \, \frac{1}{k}\right)  \,\.  \rI(k-1)  \   \rI(k+1)\ts.
	\end{equation}
\end{thm}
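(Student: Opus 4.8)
The plan is to build a combinatorial atlas whose vertices encode the ``contracted/restricted'' matroids reachable from $\Mf$, and then apply the local--global principle (Theorem~\ref{t:Hyp}) together with the pullback machinery (Theorem~\ref{t:Pull}) to deduce hyperbolicity of a vertex whose associated matrix encodes the numbers $\rI(k-1),\rI(k),\rI(k+1)$. Concretely, I would let the poset $\cP$ be the set of pairs $(S,k)$ where $S$ is an independent set of $\Mf$ that can be built up along the atlas so far, ordered by the obvious refinement; a vertex $\vf$ will carry a dimension-$2$ (or slightly larger) associated matrix $\bM_\vf$ built from counts of independent sets of two consecutive sizes in the minor $\Mf/S$, together with an associated vector $\hb_\vf$ recording how many ways each ground-set element extends an independent set. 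The edges $\ef^{\<i\>}$ correspond to adjoining element $i$ to the current independent set (i.e.\ passing to a contraction), with $\bT^{\<i\>}$ taken to be the identity so that the \eqref{eq:Iden} hypothesis of Theorem~\ref{t:Pull} holds.

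First I would set up the combinatorics so that the \defn{inheritance property} \eqref{eq:Inh} holds: the entry $(\bM\vb)_i$ should literally count extensions of a size-$k$ configuration through element $i$, and this count must factor as the pairing $\langle \bT^{\<i\>}\vb, \bM^{\<i\>}\bT^{\<i\>}\hb\rangle$ in the contracted matroid $\Mf/i$. This is the ``recursive structure of matroid minors'' step and is where the matroid axioms (in particular the exchange property, which guarantees the relevant identity-set counts behave well) actually get used. Next I would verify \eqref{eq:TPInv} (symmetry of the three-index count $\aM^{\<i\>}_{jk}$ under permuting which element was contracted first) and \eqref{eq:Decsupp} (contracting can only shrink the relevant support), so that Theorem~\ref{t:Pull} delivers \eqref{eq:PullEq}, hence \eqref{eq:Pull}. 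Then I would check the regularity conditions \eqref{eq:Irr} and \eqref{eq:hPos} at the non-sink vertices: irreducibility of $\bM_\vf$ on its support follows from connectivity of the relevant minor, and strict positivity of $\hb_\vf$ and $\bM_\vf\hb_\vf$ from the fact that every element in the support genuinely extends some independent set of the appropriate size.

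With these in place, Theorem~\ref{t:Hyp} gives hyperbolicity by induction on the height of $\cP$: sink vertices correspond to matroids of rank $\le 1$ (or empty minors), for which the associated $2\times 2$ matrix is manifestly hyperbolic (it has a zero or a rank-one positive part, so satisfies \eqref{eq:OPE} by Lemma~\ref{l:Hyp is OPE}), and then hyperbolicity propagates back up to the initial vertex. Finally I would extract the inequality: at the top vertex the hyperbolicity condition \eqref{eq:Hyp}, applied to the characteristic vectors $\eb_1,\eb_2$ of the two coordinates, reads $\aM_{12}^2 \ge \aM_{11}\aM_{22}$, and a direct bookkeeping of what these three entries count — together with the binomial normalization coming from choosing an ordered versus unordered independent set, which is exactly what produces the factor $(1+\tfrac1k)$ — yields \eqref{eq:matroid-ULC}. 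I expect the main obstacle to be the first step: defining $\bM_\vf$, $\hb_\vf$ and the precise normalization constants so that \eqref{eq:Inh} is an exact identity rather than merely an inequality, since the whole argument collapses if the recursion does not match the matrix pairing on the nose; getting the $\tfrac1k$ factor to emerge cleanly from the normalization (rather than being patched in at the end) is the delicate part of that bookkeeping.
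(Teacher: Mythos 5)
Your overall strategy — an atlas built on matroid minors, the local--global principle plus Theorem~\ref{t:Pull}, and the ordered-vs-unordered counting to produce the factor $(1+\tfrac1k)$ — is exactly the paper's, and the last point in particular is right on target. But the concrete construction you sketch has a structural flaw that the atlas axioms will not let you get away with. The out-edges of a vertex are labeled by matrix indices $i\in[\ar]$, and \eqref{eq:Inh} couples the $i$-th row of $\bM_\vf$ to the $i$-th out-neighbor; so if the edges are ``adjoin element $i$'', the matrix must be indexed by the ground set, not be $2\times 2$. The paper takes $\ar=n+1$: the index set is $X$ together with a null letter $\spstar$, and $\aM_{xy}$ counts continuations $|\Cnt_{m-1}(\alpha xy)|$ of a feasible word $\alpha$. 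The null coordinate is not optional bookkeeping: it is what lets the single pairing at the top vertex encode all three of $\rI(k-1),\rI(k),\rI(k+1)$ (one applies \eqref{eq:Hyp} to $\vb=\mathbf{1}_X$ and $\wb=\eb_{\spstar}$, giving $(k+1)!\,\rI(k+1)$, $k!\,\rI(k)$ and $(k-1)!\,\rI(k-1)$). Relatedly, \eqref{eq:hPos} then forces an interpolation device — $\bM=t\,\bA(\alpha,m+1)+(1-t)\,\bA(\alpha,m)$ with $\hb=(t,\ldots,t,1-t)$ and a limit $t\to 1$ at the end — which your plan does not anticipate; with your $\hb$ of ``extension counts'' you would have to re-prove \eqref{eq:Inh} from scratch and it is not clear it holds.

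Two further points would derail the execution. First, you locate the use of the exchange property in the inheritance step, but \eqref{eq:Inh} for this atlas is a telescoping identity valid for any simplicial complex — the paper's Theorem~\ref{t:simp} is precisely the observation that the atlas is defined without exchange and that hyperbolicity fails without it. Exchange is actually consumed in two other places: positivity/irreducibility ($\aA_{x\spstar}>0$ because $\alpha x$ extends to length $k+1$) and, crucially, the sink-vertex base case. Second, that base case is not a ``manifestly hyperbolic'' rank-$\le 1$ situation: the sink matrix $\bA(\alpha,1)$ is $(n+1)\times(n+1)$, and one must quotient by the parallel-class relation $x\sim y \Leftrightarrow \alpha xy\notin\Lc$ (whose transitivity is exactly the exchange property) to reduce to the $(r+1)\times(r+1)$ matrix $\bB$ of \eqref{eq:B-uni}, and then compute its spectrum ($\lambda=-1$ with multiplicity $r-1$, determinant $(-1)^r$) to verify \eqref{eq:OPE}. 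Without these pieces the induction has no base and the inequality cannot be extracted.
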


\smallskip

This result was recently proved by Huh, Schr\"oter and Wang in~\cite{HSW} using the
\emph{Hodge theory for matroids}.  Note that a slightly weaker but historically
first log-concavity inequality
$$\rI(k)^2 \, \ge \,\.  \rI(k-1)  \   \rI(k+1)
$$
in the generality of all matroids was proved by Adiprasito, Huh and Katz
in~\cite[Thm~9.9~(3)]{AHK}.
In the rest of this section we prove Theorem~\ref{t:matroids-HSW} and its extension
Theorem~\ref{t:matroids-BH} by using the combinatorial atlas theory.

\medskip

\subsection{Matroids as languages}
\label{ss:matroid-languages}
Let \ts $\Mf=(X,\Ic)$ \ts be a matroid of rank \ts $\rk(\Mf)$.
Let \. $\alpha=x_1\ldots x_\ell \in X^*$ \. be a word in the alphabet~$X$,
where \ts $X^\ast$ \ts is the set of finite words in the alphabet~$X$.
We say that $\alpha$ is \defn{simple}  if all letters occur at most once.
Denote by \. $|\alpha|:=\ell$ \. the length of~$\alpha$.

Word \ts $\alpha$ \ts is called \defnb{feasible} if \.
$\alpha$ \ts is simple and \.
$\{x_1,\ldots, x_\ell\} \in \Ic$.
We denote by \ts $\cL$ \ts the set of feasible words of $\Mf$, and by \ts
$\cL_k$ \ts the set of feasible words of length $k\geq 0$, where \. $0 \le k \le \rk(\Mf)$.
Note that \ts $\cL$ \ts satisfies the following properties:
\begin{itemize}
	\item  (\defng{hereditary property}) \, $\al\be\in \cL$  \ $\Rightarrow$ \ $\al\in \cL$, and
	\item  (\defng{exchange property}) \,  $\al, \be\in \cL$ \. s.t.\  \. $|\alpha| > |\beta|$
	\ $\Rightarrow$ \ $\exists\ts x\in \al$ \, s.t.\  \. $\beta x \in \Lc$\ts,
	\item (\defng{matroid symmetry propery}) \,
	$\alpha=x_1\ldots x_\ell \in \cL$  \ $\Rightarrow$ \ $x_{\sigma(1)} \ldots x_{\sigma(\ell)} \in \Lc$ \.   $\forall \. \sigma \in S_\ell$.
%	If $\alpha$ is contained in $\Lc$, then any permutation $\alpha'$ of $\alpha$ is also contained in $\Lc$.
\end{itemize}
Here we write \ts $x \in \alpha$ \ts if the letter $x$ occurs in the word~$\alpha$.
Let us mention that the first two properties imply that $\cL$ is the language set of a \defng{greedoid},
see e.g.\ \cite{BZ,KLS}.

\smallskip

For every \ts $\alpha=\al_1\ldots \al_\ell \in X^*$, the set of \defnb{continuations} of $\alpha$ is defined as
\begin{equation*}
	\Cnt(\alpha) \, := \. \bigl\{x \in X ~\mid~ \alpha x \in \Lc \bigr\}.
\end{equation*}
In particular,  \. $\Cnt(\alpha) \in X \setminus \{\al_1,\ldots, \al_\ell\}$ \. and that \. $\Cnt(\alpha) \neq \varnothing$ \. only if \. $\alpha \in \cL$.
More generally, for \ts $k \geq 1$, we write
\begin{equation*}
	\Cnt_k(\alpha) \, := \. \bigl\{ \. \beta \in X^* ~\mid~ \alpha \beta \in \Lc  \. \text{ and } |\beta|=k\. \bigr\},
\end{equation*}
and note that \. $\Cnt(\alpha)=\Cnt_1(\alpha)$.

\medskip

\subsection{Combinatorial atlas for matroids}
\label{ss:Combinatorial-atlas-matroid}
Let \ts $\Mf=(X,\Ic)$ \ts be a matroid, and
let \ts $1\le k < \rk(\Mf)$.
We define a combinatorial atlas \ts $\AA$ \ts corresponding
to \ts $(\Mf,k)$ \ts as follows.
Let  \ts $\Qf:=(\Vf, \Ef)$ \ts be the (infinite) acyclic digraph
with the set of vertices \.  $\Vf \ts := \Vfm\cup \Vf^1 \cup \ldots \cup \Vf^{k-1}$ \.
given by
\begin{align*}
	\Vf^m \, & := \ \big\{ \ts (\alpha,m,t) \ \mid \  \alpha \in X^* \text{ with }  |\alpha| \leq k-1-m, \ t \in [0,1]  \ts \big\} \ \  \text{ for } \ m \in \{1,2, \ldots,k-1\}, \\
	\Vfm  \ & := \ \big\{ \ts (\alpha,0,1) \ \mid \  \alpha \in X^* \text{ with }  |\alpha| \leq k-1  \ts \big\}.
\end{align*}
Here the restriction \ts $t=1$ \ts in \ts $\Vfm$ \ts is crucial for a technical reason that will be apparent later in the section.

Let \. $\Xf:=X\cup \{\spstar\}$ \ts be the set of letters~$X$
with one special element \ts $\spstar$ \ts added.  The reader should think of
element \ts $\spstar$ \ts as the empty letter.  Let \ts $\ar:= |\Xf|=(n+1)$ \ts
be the dimension of the atlas.  Then each vertex \ts $\vf\in \Vf^m$, \ts $m\geq 1$,  has
exactly \ts $(n+1)$ \ts outgoing edges which we label \ts $\bigl(\vf,\vf^{\<x\>}\bigr)\in \Ef$, where
\ts $x\in \Xf$ \ts and \ts $\vf^{\<x\>}\in \Vf^{m-1}$ \ts are defined as follows:
\[ \
\vf^{\<x\>} \ := \
\begin{cases}
	(\alpha x,m-1,1) & \text{ if } \ \, x \in  X,\\
	(\alpha,m-1,1) & \text{ if } \ \, x = \spstar\ts.
\end{cases}
\]
Let us emphasize that this is not a typo and for all \ts $\vf^{\<x\>}$ \ts
we indeed have the last parameter \ts $t=1$, see Figure~\ref{f:matroid-edge}.

%\medskip

\begin{figure}[hbt]
	\includegraphics[width=8.1cm]{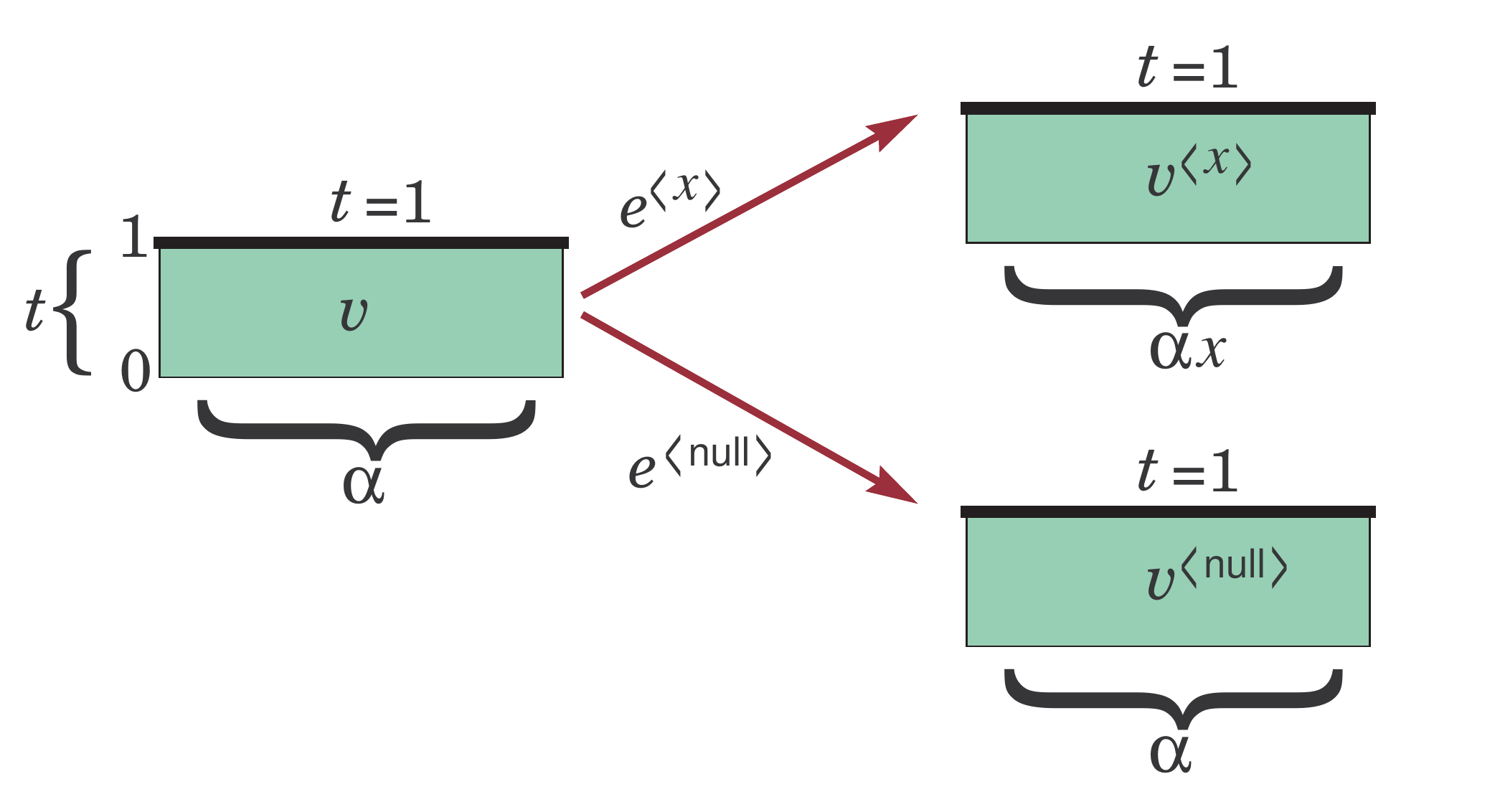}
	\vskip-.25cm
	\caption{{Atlas edges of two types: \. $\ef^{\<x\>} = \bigl(\vf,\vf^{\<x\>}\bigr)$, where \ts $v=(\al,m,t)$ \ts and \ts $\vf^{\<x\>}=(\al x,m-1,1)$, and \.
		$\ef^{\<\spstar\>} = \bigl(\vf,\vf^{\<\spstar\>}\bigr)$, where \ts $v=(\al,m,t)$ \ts and \ts $\vf^{\<\spstar\>}=(\al,m-1,1)$. }}
	\label{f:matroid-edge}
\end{figure}

\medskip

For every word \ts $\alpha \in X^*$ \ts of length \ts $\ell=|\al|$,
and for every \ts $1\le m \le \rk(\Mf)-\ell-1$,
denote by \.
$\bA(\alpha,m) := (\aA_{x\ts y})_{x,y \in \Xf}$ \. the symmetric \ts $\ar \times \ar$ \ts
matrix defined as follows:
\begin{equation}\label{eq:A-uni}
\ \aligned \ & \ \aA_{x\ts y} \, := \,  \bigl| \Cnt_{m-1}(\alpha xy)  \bigr|  \qquad  \text{for  \ $x,y \in \Cnt(\alpha)$,} %}
\\
	\ & \ \aA_{x\.\spstar} \, = \, \aA_{\spstar \. x} \, := \,  \bigl| \Cnt_{m-1}(\alpha x)  \bigr|
	\qquad \text{for \ \ $x \in \Cnt(\alpha)$,} \\
	\ & \ \aA_{\spstar\. \spstar} \,  := \, \bigl| \Cnt_{m-1}(\alpha)  \bigr|\ts.
\endaligned
\end{equation}
For the first line, note that  \. $\aA_{x\ts y} = \aA_{y\ts x}$ \. by matroid symmetry property.
Note also that \. $\aA_{x\ts x} = 0$ \. for $x \in X$ since \ts $\alpha \beta x x $ \ts is not simple.
Next, observe that
\.  $\aA_{x\ts y} = 0$ \.
whenever  \. $x \notin \Cnt(\alpha) \cup \{\spstar\}$ \. or \.  $y \notin \Cnt(\alpha) \cup \{\spstar\}$.
Finally, we have \ts $\aA_{x\.\spstar} >0$ \ts whenever \ts $x \in \Cnt(\alpha)$, since by the exchange property
the word \ts $\alpha x \in \Lc$ \ts can be extended to \ts $\alpha x\beta \in \Lc$ \ts
for some \ts $\beta \in X^*$ \ts with \. $|\beta| \leq \rk(\Mf)-\ell-1$.

\smallskip

For each vertex \. $\vf=(\alpha,m,t) \in \Vf$, define the associated matrix as follows:
\[  \bM \, = \, \bM_{(\alpha,m,t)} \ := \  t \, \bA(\alpha,m+1)  \ +  \ (1-t) \, \bA(\alpha,m).
\]
Similarly, define the associated vector \. $\hb = \hb_{(\alpha,m,t)} \in \Rb^{\ar}$ \. with
coordinates
\[  \ah_x \ := \
\begin{cases}
	\, t & \text{ if } \ x \in X,\\
	\, 1-t & \text{ if } \ x =\spstar.
\end{cases}    \]
Finally, define
the linear transformation \. $\bT^{\<x\>}: \Rb^{\ar} \to \Rb^{\ar}$ \.
associated to the edge \ts $(\vf,\vf^{\<x\>})$ to be the identity map.

\medskip

\subsection{Properties of the atlas} \label{ss:greedoid-prop}
We now show that our combinatorial atlas \ts $\AA$ \ts
satisfies the conditions in Theorem~\ref{t:Hyp}, in the following series of lemmas.

\smallskip

\begin{lemma}\label{l:greedoid Irr}
	For every vertex \. $\vf\ts =\ts (\alpha,m,t)\in \Vf$, we have:
	\begin{enumerate}
		\item \label{item:Irr 1} the support \. $\supp(\bMr_{\vf} \, )$ \. of the associated matrix \ts $\bMr_{\vf}$ \ts is given by
		$$\supp\big(\bAr \, (\alpha,m+1)\big) \ = \ \supp\big(\bAr \, (\alpha,m)\big) \ = \
		\begin{cases}
			\Cnt(\alpha) \ts \cup \ts \{\spstarr\}  & \text{ if }\  \alpha \in \Lc\\
			\varnothing & \text{ if } \ \alpha \notin \Lc
		\end{cases}	 $$
		\item \label{item:Irr 2} vertex \. $\vf$ \. satisfies \eqref{eq:Irr}, and
		\item \label{item:Irr 3} vertex \. $\vf$ \. satisfies \eqref{eq:hPos} \. for all \. $t \in (0,1)$.
	\end{enumerate}
\end{lemma}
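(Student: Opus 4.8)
The plan is to prove the three items in order, since (ii) and (iii) both rest on the support computation in (i). For (i), I would simply unwind \eqref{eq:A-uni}: by construction the only possibly-nonzero entries of $\bA(\alpha,m)$ are $\aA_{xy}$ with $x,y\in\Cnt(\alpha)$, the entries $\aA_{x\spstar}=\aA_{\spstar x}$ with $x\in\Cnt(\alpha)$, and $\aA_{\spstar\spstar}$, so $\supp\bA(\alpha,m)\subseteq\Cnt(\alpha)\cup\{\spstar\}$, and likewise for $\bA(\alpha,m+1)$. When $\alpha\notin\Lc$, the hereditary property gives $\Cnt(\alpha)=\varnothing$ and $\Cnt_{m-1}(\alpha)=\varnothing$, so both matrices vanish. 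When $\alpha\in\Lc$, the key point is that $\aA_{x\spstar}=|\Cnt_{m-1}(\alpha x)|>0$ for every $x\in\Cnt(\alpha)$: since $\alpha x\in\Lc$, repeatedly applying the exchange property against a fixed basis extends $\alpha x$ to a feasible word of every length from $|\alpha|+1$ up to $\rk(\Mf)$, and every vertex of $\Qf$ satisfies $|\alpha|+m\le k-1\le\rk(\Mf)-2$, so the length $|\alpha|+m$ needed for $\Cnt_{m-1}(\alpha x)$ — and $|\alpha|+m+1$ for the $\bA(\alpha,m+1)$ analogue — is in range. The same extension argument applied to $\alpha$ shows $\Cnt(\alpha)\ne\varnothing$, so $\spstar$ belongs to the support as well, giving $\supp\bA(\alpha,m)=\supp\bA(\alpha,m+1)=\Cnt(\alpha)\cup\{\spstar\}$. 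Since $\bM_\vf=t\,\bA(\alpha,m+1)+(1-t)\,\bA(\alpha,m)$ is a nonnegative combination of nonnegative matrices — with $t=1$ forced on $\Vfm$, so only the first term survives there, and $t\in[0,1]$ otherwise — there is no cancellation and $\supp\bM_\vf$ equals this common value.

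For (ii): if $\alpha\notin\Lc$ the support is empty and \eqref{eq:Irr} holds vacuously, so suppose $\alpha\in\Lc$ and put $S:=\Cnt(\alpha)\cup\{\spstar\}$. From (i), $(\bM_\vf)_{x\spstar}>0$ for every $x\in\Cnt(\alpha)$ whatever the value of $t$, and by symmetry $(\bM_\vf)_{\spstar x}>0$ too. Hence in the digraph on $S$ recording the nonzero entries of $\bM_\vf$, the vertex $\spstar$ receives an edge from and sends an edge to every other vertex of $S$, so any two $x,y\in S$ are joined by the path $x\to\spstar\to y$; the digraph is strongly connected, and $\bM_\vf$ restricted to $S$ is irreducible.

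For (iii): with $t\in(0,1)$ every coordinate of $\hb_\vf$ equals $t$ or $1-t$ and so $\hb_\vf$ is strictly positive, in particular on $\supp\bM_\vf$. Since $\bM_\vf$ is nonnegative, for each $x\in\supp\bM_\vf$ we may pick $y$ with $(\bM_\vf)_{xy}>0$ and conclude $(\bM_\vf\hb_\vf)_x\ge(\bM_\vf)_{xy}\,(\hb_\vf)_y>0$; thus $\bM_\vf\hb_\vf$ is strictly positive on $\supp\bM_\vf$, which is \eqref{eq:hPos}. The only delicate point in the whole argument is the length bookkeeping in (i) — verifying that the continuation sets invoked are over lengths at most $\rk(\Mf)$, so the exchange property really does produce the needed extensions — together with the routine observation that the boundary values $t=0,1$ behave exactly like the generic case; I do not anticipate any genuine obstacle beyond this.
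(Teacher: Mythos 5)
Your proposal is correct and follows essentially the same route as the paper: the support computation comes straight from the definition of $\bA(\alpha,m)$ together with the observation (already made in the paper right after~\eqref{eq:A-uni}) that $\aA_{x\ts\spstar}>0$ for $x\in\Cnt(\alpha)$ via the exchange property, irreducibility follows from the star through $\spstar$, and \eqref{eq:hPos} from positivity of $\hb_\vf$ and nonnegativity of $\bM_\vf$. Your version merely spells out the length bookkeeping and the $t\in\{0,1\}$ boundary cases that the paper leaves implicit.
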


\smallskip

\begin{proof}
	Part \eqref{item:Irr 1} follows directly from the definition
	of matrices \. $\bM$, \. $\bA(\alpha,m+1)$ \. and \. $\bA(\alpha,m)$.
	For part \eqref{item:Irr 2}, observe that if \ts $\alpha \notin \Lc$, then $\bM$ is a zero matrix and
	\ts $\vf$ \ts trivially satisfies \eqref{eq:Irr}.
	On the other hand, if \ts $\alpha \in \Lc$,
	then it
	follows from the definition of \. $\bM = \bigl(\aM_{x\ts y}\bigr)$,
	that \ts $\aM_{x\. \spstar} >0$ \ts for every \ts $x \in \Cnt(\alpha)$.
	Since the support of~$\ts\bM$ \ts is \ts $\Cnt(\alpha) \ts \cup \ts \{\spstar\}$,
	this proves \eqref{eq:Irr}.
	Finally, part~\eqref{item:Irr 3} follows from the fact that \ts $\hb_{\vf}$ \ts is a strictly positive vector when \ts $t\in (0,1)$, and  that $\bM$ is a nonnegative matrix.
\end{proof}

%\smallskip
%
%\begin{lemma}\label{l:greedoid Iden}
%	For every greedoid \ts $\Mf=(X,\Ic)$, the atlas \ts $\AA$ \ts satisfies \eqref{eq:Iden}.
%\end{lemma}
%
%\begin{proof}
%This lemma follows directly from the definition that every map $\bT^{\<i\>}:\Rb^d \to \Rb^d$ is the identity map.
%\end{proof}

\smallskip

\begin{lemma}\label{l:greedoid Inh}
	For every matroid \ts $\Mf=(X,\Ic)$, the atlas \ts $\AA$ \ts satisfies  \eqref{eq:Inh}.
\end{lemma}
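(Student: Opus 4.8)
The plan is to unwind both sides of \eqref{eq:Inh} into sums of cardinalities of continuation sets, and match them coefficient by coefficient in the parameter $t$. The one combinatorial ingredient needed is the identity
\[
\sum_{z \in X} \bigl|\Cnt_{m-1}(\gamma z)\bigr| \ = \ \bigl|\Cnt_m(\gamma)\bigr|,
\]
valid for every word $\gamma \in X^*$ and every integer $m \ge 1$: by the hereditary property the first letter $z$ of any length-$m$ continuation $\beta = z\beta'$ of $\gamma$ lies in $\Cnt(\gamma)$, with $\beta' \in \Cnt_{m-1}(\gamma z)$, so that $\beta \mapsto (z,\beta')$ is a bijection onto pairs $(z,\beta')$ with $z \in \Cnt(\gamma)$ and $\beta' \in \Cnt_{m-1}(\gamma z)$ (the terms with $z \notin \Cnt(\gamma)$ on the left contributing $0$).

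Now fix a non-sink vertex $\vf = (\alpha, m, t)$, so that $m \ge 1$. Since each $\bT^{\<x\>}$ is the identity map by construction and both sides of \eqref{eq:Inh} are linear in $\vb$, the property \eqref{eq:Inh} for $\vf$ is equivalent to the scalar identities $\aM_{xy} = \bigl(\bM^{\<x\>}\hb\bigr)_y$ for all $x \in \supp(\bM)$ and $y \in \Xf$. If $\alpha \notin \Lc$ then $\bM = 0$ and there is nothing to prove, so assume $\alpha \in \Lc$, whence $\supp(\bM) = \Cnt(\alpha) \cup \{\spstar\}$ by Lemma~\ref{l:greedoid Irr}. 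The structural fact to exploit is that the out-neighbor $\vf^{\<x\>}$ always has last coordinate $1$: treating $\spstar$ as the empty letter and writing $\alpha' := \alpha x$ (so $\alpha' = \alpha$ when $x = \spstar$), this gives the uniform identification $\bM^{\<x\>} = \bM_{(\alpha', m-1, 1)} = \bA(\alpha', m)$. Since $\ah_z = t$ for $z \in X$ and $\ah_\spstar = 1-t$, it follows that $\bigl(\bM^{\<x\>}\hb\bigr)_y = t \sum_{z\in X}\bigl(\bM^{\<x\>}\bigr)_{yz} + (1-t)\bigl(\bM^{\<x\>}\bigr)_{y\spstar}$.

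It then remains to substitute the closed forms \eqref{eq:A-uni}, for the entries of $\bA(\alpha', m)$ on the right and of $\bM = t\,\bA(\alpha, m+1) + (1-t)\,\bA(\alpha, m)$ on the left, and compare via a short case analysis on whether $y$ equals $\spstar$. For $y \in \supp(\bM^{\<x\>}) = \Cnt(\alpha') \cup \{\spstar\}$, both sides reduce to $t\,\bigl|\Cnt_m(\alpha' y)\bigr| + (1-t)\,\bigl|\Cnt_{m-1}(\alpha' y)\bigr|$ (reading $\alpha' y$ as $\alpha'$ when $y = \spstar$): the $(1-t)$-coefficients agree on the nose, while the $t$-coefficient on the right is $\sum_{z\in X}\bigl|\Cnt_{m-1}(\alpha' y z)\bigr|$, which collapses to $\bigl|\Cnt_m(\alpha' y)\bigr|$ by the identity above. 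For $y \notin \supp(\bM^{\<x\>})$ both sides vanish — the right side because the $y$-th row of $\bM^{\<x\>}$ is zero, and $\aM_{xy}$ because $y \notin \Cnt(\alpha')$ makes the entries $\bigl(\bA(\alpha, m+1)\bigr)_{xy}$ and $\bigl(\bA(\alpha, m)\bigr)_{xy}$ equal to $0$, either directly from \eqref{eq:A-uni} (if $y \notin \Cnt(\alpha)$) or because $\alpha' y$ is infeasible (if $y \in \Cnt(\alpha) \setminus \Cnt(\alpha')$). I do not foresee a real obstacle: the only points requiring care are this vanishing bookkeeping and the companion inclusion $\Cnt(\alpha x) \subseteq \Cnt(\alpha)$ (needed to read $\aM_{xy}$ off \eqref{eq:A-uni} in the main case), both of which follow from the hereditary and matroid symmetry properties of $\Lc$, together with the clerical matching of the $\spstar$-indexed entries against the empty-letter convention. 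The rest is routine substitution.
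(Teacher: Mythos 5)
Your proof is correct and follows essentially the same route as the paper's: reduce \eqref{eq:Inh} to the entrywise identities \. $\aM_{xy} = \bigl(\bM^{\<x\>}\hb\bigr)_y$ \. via \eqref{eq:Iden} and linearity, then expand both sides using \eqref{eq:A-uni} and collapse the sum over $z$ by the identity \. $\sum_{z}|\Cnt_{m-1}(\gamma z)| = |\Cnt_m(\gamma)|$, which is exactly the step \eqref{eq:sum-uni} in the paper. The only difference is cosmetic: you treat all cases uniformly through the convention $\alpha' := \alpha x$ and handle the off-support vanishing explicitly, where the paper writes out one representative case and declares the rest analogous.
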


\begin{proof}
	Let $\vf=(\alpha,m,t)\in \Vf^m$, $m\ge 1$, be a non-sink vertex of~$\Qf$.
	Let \. $x \in \Xf$.
	By the linearity of~$\bT^{\<x\>}$, it suffices to show that for every \.
	$y \in \Xf$, we have:
	\[  \aM_{xy} \ = \ \big\< \bT^{\<x\>} \eb_y\ts, \ts\bM^{\<x\>} \,  \bT^{\<x\>} \hb \big\>,
	\]	
	where \. $\big\{\eb_{y}, \. y \in \Xf\big\}$ \. is the standard basis in \ts $\Rb^{\ar}$.
	We present only the proof for the case \ts $x,y \in \Cnt(\alpha)$, as the proof
	of the other cases are analogous.	
	
	First suppose that \ts $x,y \in \Cnt(\alpha)$ \ts are distinct.
	Then:
	\begin{equation*}%\label{eq:Inh greedoid 1}
		\big\< \bT^{\<x\>} \eb_y\ts, \ts\bM^{\<x\>} \,  \bT^{\<x\>} \hb \big\> \quad = \ \
		\sum_{z \ts\in\ts \Xf}  \aM^{\<x \>}_{yz} \, \big(\bT^{\<x\>} \hb \big)_z \  = \  \sum_{z \ts\in\ts X}
		\, t\. \bA(\alpha x, m)_{yz}  \  + \ (1-t)\. \bA(\alpha x, m)_{y\ts \spstar}.
	\end{equation*}
Let \ts $\ell:=|\al|$.  By the definition of \. $ \bA(\alpha x, m)$, this is equal to
	\begin{equation}\label{eq:sum-uni}
		\begin{split}			
			&  \  \sum_{z \ts\in\ts X}  \,    t \,  |\Cnt_{m-1}(\alpha xyz)|
			\ + \   (1-t) \,  |\Cnt_{m-1}(\alpha xy)|
			\\
			& \qquad \ = \   t \,  |\Cnt_{m}(\alpha xy)| \ +  \  (1-t) \,   |\Cnt_{m-1}(\alpha xy)|.
		\end{split}
	\end{equation}
	%	where we substitute \. $\gamma \gets z \beta$ \. in the first term of the last equality.
	By the definition of \ts $ \bA(\alpha, m+1)$ \ts and \ts $ \bA(\alpha, m)$, this is equal to
	\begin{align*}
		t \, \bA(\alpha, m+1)_{xy} \ + \
		(1-t) \,  \bA(\alpha, m)_{xy} \ = \   \aM_{xy}\.,
	\end{align*}
	which proves \eqref{eq:Inh} for this case.
	This completes the proof.
\end{proof}

\smallskip

\begin{lemma}\label{l:greedoid TPInv}
	For every matroid \ts $\Mf=(X,\Ic)$, the atlas \ts $\AA$ \ts satisfies  \eqref{eq:TPInv}.
\end{lemma}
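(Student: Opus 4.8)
The plan is to unwind the definition of the matrices $\bM^{\<x\>}$ at the three out-neighbors of a non-sink vertex $\vf=(\alpha,m,t)$ and check that the three entries $\aM^{\<i\>}_{jk}$, $\aM^{\<j\>}_{ki}$, $\aM^{\<k\>}_{ij}$ coincide term by term. Recall $\bM_{\vf} = t\,\bA(\alpha,m+1) + (1-t)\,\bA(\alpha,m)$, and that for $x\in X$ the out-neighbor $\vf^{\<x\>}$ is $(\alpha x, m-1, 1)$, whose associated matrix is simply $\bA(\alpha x, m)$ (since $t=1$ kills the $(1-t)$ summand); while for $x=\spstar$ the out-neighbor is $(\alpha, m-1,1)$ with matrix $\bA(\alpha, m)$. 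So $(\bM^{\<x\>})_{jk}$ is, up to the $\spstar$ bookkeeping, a count of the form $|\Cnt_{m-1}(\alpha x y z)|$ where $y,z$ are the letters corresponding to the indices $j,k$ (with a letter simply dropped from the word when the index is $\spstar$). The key observation is that for a word $w$ and distinct letters $x,y,z\in\Cnt(w)$ one has $|\Cnt_{\ell}(wxyz)|$ depending only on the underlying set $\{x,y,z\}$, not the order in which $x,y,z$ are appended — this is exactly the matroid symmetry property of the language $\cL$ extended one step: appending $x,y,z$ in any order gives feasible words with the same set of continuations. Hence permuting which of the three indices plays the role of the "edge label" versus the two "matrix coordinates" does not change the count.

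The steps I would carry out, in order. First, reduce to the case $\alpha\in\cL$, since otherwise $\bM_{\vf}$ is the zero matrix, $\supp(\bM_{\vf})=\varnothing$ by Lemma~\ref{l:greedoid Irr}\eqref{item:Irr 1}, and there is nothing to check. Second, fix $i,j,k\in\supp(\bM_{\vf})=\Cnt(\alpha)\cup\{\spstar\}$ and split into cases according to how many of $i,j,k$ equal $\spstar$ and whether the non-$\spstar$ ones are distinct (note $\aA_{xx}=0$ for $x\in X$, so if two of the non-$\spstar$ indices coincide the entries vanish on enough sides to make the identity trivial — this needs a short check). Third, in the generic case $i,j,k\in\Cnt(\alpha)$ pairwise distinct, write out
\[
\aM^{\<i\>}_{jk} \;=\; t\,\bA(\alpha i, m)_{jk} + (1-t)\,\bA(\alpha i, m)_{j\spstar}\,,
\]
wait — more carefully, $\bM^{\<i\>}=\bA(\alpha i, m)$, so $\aM^{\<i\>}_{jk}=\bA(\alpha i,m)_{jk}=|\Cnt_{m-1}(\alpha ijk)|$, and symmetrically $\aM^{\<j\>}_{ki}=|\Cnt_{m-1}(\alpha jki)|$ and $\aM^{\<k\>}_{ij}=|\Cnt_{m-1}(\alpha kij)|$; these agree by matroid symmetry of $\cL$ applied to reorder the suffix. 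Fourth, handle the cases with one index equal to $\spstar$, say $k=\spstar$, $i,j\in\Cnt(\alpha)$ distinct: then $\aM^{\<i\>}_{j\spstar}=\bA(\alpha i,m)_{j\spstar}=|\Cnt_{m-1}(\alpha ij)|$, $\aM^{\<j\>}_{\spstar i}=\bA(\alpha j,m)_{\spstar i}=|\Cnt_{m-1}(\alpha ji)|$, and $\aM^{\<\spstar\>}_{ij}=\bA(\alpha,m)_{ij}=|\Cnt_{m-1}(\alpha ij)|$ — all equal by symmetry. Fifth, the cases with two or three $\spstar$'s: e.g. $j=k=\spstar$, $i\in\Cnt(\alpha)$, giving $\aM^{\<i\>}_{\spstar\spstar}=\bA(\alpha i,m)_{\spstar\spstar}=|\Cnt_{m-1}(\alpha i)|$, $\aM^{\<\spstar\>}_{\spstar i}=\bA(\alpha,m)_{\spstar i}=|\Cnt_{m-1}(\alpha i)|$, $\aM^{\<\spstar\>}_{i\spstar}=\bA(\alpha,m)_{i\spstar}=|\Cnt_{m-1}(\alpha i)|$; and $i=j=k=\spstar$ is the trivial identity $\aA_{\spstar\spstar}=\aA_{\spstar\spstar}=\aA_{\spstar\spstar}$.

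The main obstacle is not conceptual but organizational: the identity is essentially immediate once one sees that every entry is a continuation-count of a word obtained from $\alpha$ by appending some of $\{i,j,k\}$ (dropping $\spstar$), and that such counts are symmetric in the appended letters. The real work is bookkeeping the roughly five sub-cases of how many coordinates are $\spstar$ and invoking the matroid symmetry property cleanly in each; I would try to phrase it uniformly by noting that for $x,y,z\in\Xf$, $\aM^{\<x\>}_{yz} = |\Cnt_{m-1}(\alpha\,\widehat{x}\,\widehat{y}\,\widehat{z})|$ where $\widehat{\phantom{x}}$ deletes the letter if it is $\spstar$, so that the common value depends only on the multiset $\{x,y,z\}$, and the vanishing when two non-$\spstar$ letters coincide is subsumed since then the word is non-simple and the count is $0$ on all three sides. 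That uniform formulation collapses all cases into one line and is the version I would write up.
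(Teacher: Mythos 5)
Your proposal is correct and follows essentially the same route as the paper: reduce each entry $\aM^{\<x\>}_{yz}$ to the continuation count $|\Cnt_{m-1}(\alpha xyz)|$ (with $\spstar$ letters dropped) and invoke the matroid symmetry property to permute the appended letters. The paper only writes out the generic case $x,y,z\in X$ and declares the $\spstar$ cases analogous, so your explicit case analysis and the uniform ``delete the $\spstar$'s'' formulation are just a more detailed write-up of the same argument.
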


\smallskip

\begin{proof}
	Let $\vf=(\alpha,m,t)\in \Vf^m$, $m\ge 1$, be a non-sink vertex of~$\Qf$, and
	let \ts $x,y,z\in\Xf$.
	We present only the proof of \eqref{eq:TPInv} for the case when
	\. $x,y,z \in X$,  as other cases follow analogously.
	We have:
	\begin{align} \label{eq:TP-inv-uni}
		\aM_{yz}^{\<x\>} \ = \ \bA(\alpha x,m)_{yz} \ = \  |\Cnt_{m-1}(\alpha xyz)|\ts.
	\end{align}
	By the matroid symmetry property, the right side of the equation above is invariant under
	every permutation of $\{x,y,z\}$.
	This shows that \ts $\aM_{yz}^{\<x\>}$ \ts is invariant under every permutation of $\{x,y,z\}$, and the proof is complete.
\end{proof}

\smallskip

\begin{lemma}\label{l:greedoid Decsupp}
	For every matroid \ts $\Mf=(X,\Ic)$, the atlas \ts $\AA$ \ts satisfies  \eqref{eq:Decsupp}.
\end{lemma}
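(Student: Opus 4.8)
The plan is to deduce \eqref{eq:Decsupp} directly from the support computation in Lemma~\ref{l:greedoid Irr}(\ref{item:Irr 1}), using only the hereditary and matroid symmetry properties of the language~$\Lc$. First I would fix a non-sink vertex $\vf = (\alpha, m, t) \in \Vf^m$ with $m \ge 1$. If $\alpha \notin \Lc$, then by Lemma~\ref{l:greedoid Irr}(\ref{item:Irr 1}) the associated matrix $\bM = \bM_\vf$ is the zero matrix, so $\supp(\bM) = \varnothing$ and \eqref{eq:Decsupp} holds vacuously. Hence I may assume $\alpha \in \Lc$, in which case $\supp(\bM) = \Cnt(\alpha) \cup \{\spstar\}$.

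Next I would fix $x \in \supp(\bM)$ and recall from the construction of the atlas that $\vf^{\<x\>} = (\alpha x, m-1, 1)$ if $x \in X$, and $\vf^{\<x\>} = (\alpha, m-1, 1)$ if $x = \spstar$. In the second case $\vf^{\<\spstar\>}$ has the same first coordinate $\alpha \in \Lc$ as $\vf$, so Lemma~\ref{l:greedoid Irr}(\ref{item:Irr 1}) gives $\supp(\bM^{\<\spstar\>}) = \Cnt(\alpha) \cup \{\spstar\} = \supp(\bM)$, and the required inclusion holds (with equality). In the first case, $x \in X \cap \supp(\bM)$ forces $x \in \Cnt(\alpha)$, i.e.\ $\alpha x \in \Lc$; applying Lemma~\ref{l:greedoid Irr}(\ref{item:Irr 1}) to the vertex $\vf^{\<x\>} = (\alpha x, m-1, 1)$ then yields $\supp(\bM^{\<x\>}) = \Cnt(\alpha x) \cup \{\spstar\}$. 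So it remains to verify the containment $\Cnt(\alpha x) \subseteq \Cnt(\alpha)$.

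This last inclusion is the only real content of the argument, and I would argue it as follows: if $y \in \Cnt(\alpha x)$ then $\alpha x y \in \Lc$; by the matroid symmetry property applied to the transposition exchanging the final two letters, $\alpha y x \in \Lc$; and then by the hereditary property $\alpha y \in \Lc$, i.e.\ $y \in \Cnt(\alpha)$. Combining the two cases, $\supp(\bM^{\<x\>}) \subseteq \Cnt(\alpha) \cup \{\spstar\} = \supp(\bM)$ for every $x \in \supp(\bM)$, which is \eqref{eq:Decsupp}. I do not anticipate any genuine obstacle; the one subtlety worth flagging is that $\Cnt(\cdot)$ shrinks under a right extension only because of the matroid symmetry property — the hereditary property alone (which deletes a suffix, not an interior letter) would not suffice, and this is exactly where the extra hypothesis on the language enters.
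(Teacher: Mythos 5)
Your proof is correct and follows essentially the same route as the paper: reduce via Lemma~\ref{l:greedoid Irr}(\ref{item:Irr 1}) to the containment $\Cnt(\alpha x)\subseteq\Cnt(\alpha)$, and derive that from the matroid symmetry property followed by the hereditary property. You spell out the $x=\spstar$ case and the transposition step slightly more explicitly than the paper does, but there is no substantive difference.
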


\smallskip

\begin{proof}
	Let $\vf=(\alpha,m,t)\in \Vf^m$, $m\ge 1$, be a non-sink vertex of~$\Qf$,
	and let $x \in \Xf$.
	We need to show that \. $\supp(\bM)  \supseteq  \supp(\bM^{\<x\>})$.
	First suppose that $\alpha \notin \Lc$.
	Then by Lemma~\ref{l:greedoid Irr}\eqref{item:Irr 1}
	\ts $\supp(\bM)  = \supp\big(\bM^{\<x\>}\big)= \varnothing$, and the lemma hold trivially.

	Now suppose that $\alpha \in \Lc$.
	By Lemma~\ref{l:greedoid Irr}\eqref{item:Irr 1},
	it suffices to show that \ts $\Cnt(\alpha) \supseteq \Cnt(\alpha x)$.
	Recall that for every \. $\alpha x y \in \Lc$ \. we have  \. $\alpha  y \in \Lc$ \.
by the matroid symmetry and hereditary properties.  This implies the result.
\end{proof}

\medskip

\subsection{Proof of Theorem~\ref{t:matroids-BH}} \label{ss:-proof}

We first show that every sink vertex in \ts $\Qf$ \ts is hyperbolic.

\smallskip

\begin{lemma}\label{l:greedoid BHyp}
	Let \ts $\Mf=(X,\Ic)$ \ts be a matroid on \ts $|X|=n$ elements,
	and let \ts $1\le k <\rk(\Mf)$.
	Then every vertex in \ts $\Vfm$ \ts satisfies \eqref{eq:Hyp}.
\end{lemma}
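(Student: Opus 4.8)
The plan is to compute the associated matrix at a sink vertex explicitly, recognize it as (the complement of a disjoint union of cliques) bordered by an all‑ones row/column, and then invoke Lemma~\ref{l:Hyp is OPE}.

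First I would unwind the definitions at a sink. Let $\vf=(\alpha,0,1)\in\Vfm$ with $\ell:=|\alpha|$. By definition the associated matrix is $\bM:=\bM_{\vf}=1\cdot\bA(\alpha,1)+0\cdot\bA(\alpha,0)=\bA(\alpha,1)$ --- this is precisely why sinks are forced to carry $t=1$ (so that $\bA(\alpha,0)$, which is never used, need not even be defined). If $\alpha\notin\Lc$ then $\bM=\mathbf 0$ by Lemma~\ref{l:greedoid Irr}\eqref{item:Irr 1} and \eqref{eq:Hyp} holds vacuously, so assume $\alpha=x_1\cdots x_\ell\in\Lc$. Since $|\Cnt_0(\beta)|$ equals $1$ if $\beta\in\Lc$ and $0$ otherwise, restricting $\bM$ to its support $\supp(\bM)=\Cnt(\alpha)\cup\{\spstar\}$ yields a symmetric $0/1$ matrix that is $1$ on the entire $\spstar$-row and $\spstar$-column (including the diagonal entry $\aM_{\spstar\spstar}$), while its submatrix $B$ indexed by $\Cnt(\alpha)$ has $(x,y)$-entry $[\alpha xy\in\Lc]$.

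Next I would identify $B$ combinatorially. For distinct continuations $x,y\in\Cnt(\alpha)$, the word $\alpha xy$ is feasible exactly when $x$ and $y$ are \emph{not parallel} in the contracted matroid $\Mf':=\Mf/\{x_1,\dots,x_\ell\}$ (in the vector model: the images of $x,y$ modulo $\mathrm{span}(x_1,\dots,x_\ell)$ are linearly independent); this is where the matroid axioms enter, via submodularity of the rank function. Since $\Cnt(\alpha)$ is exactly the set of non‑loops of $\Mf'$ and ``parallel'' is an equivalence relation on it with classes $P_1,\dots,P_s$, we get $B=J-\widehat J$, where $J$ is the all‑ones matrix on $\Cnt(\alpha)$ and $\widehat J=\sum_r J_{P_r}$ is block‑diagonal with one all‑ones block per class. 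In particular $\widehat J\succeq 0$, hence $B\preceq J$.

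To finish, by Lemma~\ref{l:Hyp is OPE} it suffices to verify \eqref{eq:NDC} for $\bM$ on its support, and I take $\gb:=\eb_{\spstar}$. Then $\bM\gb$ is the all‑ones vector and $\langle\gb,\bM\gb\rangle=\aM_{\spstar\spstar}=1>0$. If $\langle\vb,\bM\gb\rangle=0$, i.e. $v_{\spstar}=-s$ with $s:=\sum_{x\in\Cnt(\alpha)}v_x$, write $\wb$ for the $\Cnt(\alpha)$‑part of $\vb$; then
\[
\langle\vb,\bM\vb\rangle \;=\; \langle\wb,B\wb\rangle+2v_{\spstar}s+v_{\spstar}^2 \;\le\; s^2-2s^2+s^2 \;=\; 0,
\]
using $\langle\wb,B\wb\rangle\le\langle\wb,J\wb\rangle=s^2$ from $B\preceq J$. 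This is \eqref{eq:NDC}, hence \eqref{eq:Hyp}. (Equivalently, the identity $\langle\vb,\bM\vb\rangle=-\langle\wb,\widehat J\wb\rangle+(v_{\spstar}+s)^2$, valid for every $\vb$, exhibits the quadratic form as negative semidefinite plus a single positive square, so $\bM$ has at most one positive eigenvalue and \eqref{eq:OPE} holds.) The only substantive step is the structural description $B\preceq J$; once that is in hand everything else is the one‑line rank‑one computation above, so I expect the parallel‑class identification of $B$ to be the single point needing genuine care.
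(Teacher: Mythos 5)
Your proposal is correct, and the structural heart of it coincides with the paper's: both identify the restriction of \ts $\bA(\alpha,1)$ \ts to \ts $\Cnt(\alpha)$ \ts as the complement of a disjoint union of all-ones blocks indexed by the classes of the relation \ts ``$x\sim y \Leftrightarrow \alpha xy\notin\Lc$'' \ts (your ``parallel classes in the contraction''; the paper proves transitivity directly from the exchange property of the language rather than via submodularity, but the content is the same). Where you diverge is the finishing step. The paper collapses equivalent rows/columns to the explicit \ts $(r+1)\times(r+1)$ \ts matrix \ts $\bB$ \ts of~\eqref{eq:B-uni} and checks \eqref{eq:OPE} by an eigenvalue/determinant count; you instead write the quadratic form as \ts $\<\vb,\bM\vb\> = (v_{\spstar}+s)^2 - \<\wb,\widehat{J}\wb\>$, i.e.\ one positive square minus a positive semidefinite form, which gives \eqref{eq:OPE} (or \eqref{eq:NDC} with \ts $\gb=\eb_{\spstar}$) with no determinant computation and no case analysis on~$r$. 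Your route is arguably cleaner for this lemma; the trade-off is that the paper's explicit matrix \ts $\bB$ \ts is reused in the proof of Theorem~\ref{t:matroids-BH}, where the rescaled weights \ts $\ac_i$ \ts make the determinant's sign (and the bound \ts $r\le n-k+1$) the crux, so your softer decomposition would need a quantitative refinement to carry over to the ultra-log-concave version.
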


\begin{proof}
	Let \ts $\vf=(\alpha, 0,1)\in \Vfm$ \ts be a sink vertex, and let \ts $\ell:=|\al|$.
	It suffices to show that \ts $\bA(\alpha,1)$ \ts
	satisfies \eqref{eq:Hyp}. First note that if \ts
	$\alpha \notin \Lc$, then \ts $\bA(\alpha,1)$ \ts
	is a zero matrix, and \eqref{eq:Hyp} is trivially true.
	Thus, we can assume that \ts $\alpha \in \Lc$.
	We write \. $\aA_{x,y} \ts := \ts \bA(\alpha,1)_{xy}$ \.
	for every \ts $x,y \in X$.
	
	We define an equivalence relation on $\Cnt(\alpha)$ by writing \ts $x \sim y$ \ts if \ts $\alpha xy \not \in \Lc$.
	Note that reflexivity of the relation follows from the fact that $\alpha xx$ is not a simple word,
	symmetry follows from the matroid symmetry property, and
	transitivity follows from the exchange property.
	Note that the number of equivalence classes \ts $r$ \ts of this relation is at most
	\begin{equation}\label{eq:parallel class}
		r \ \leq \  |\Cnt(\alpha)| \  \leq \ n-\ell \ \leq \ n-k+1.
	\end{equation}

	Now, for every \ts $x\in \supp(\bA) \setminus \{\spstar \}$ \ts and $y\in \supp(\bA)$, we have:
\begin{equation}\label{eq:Hyp-uni}
\aA_{xy} \ = \
	\begin{cases}
		1 & \text{ if  \. $y \in X$ \. and \. $y \not \sim x$},\\
		0 & \text{ if  \. $y \in X$ \. and \. $y \sim x$},\\
		1 & \text{ if  \. $y =\spstar$}.
	\end{cases}
\end{equation}
In particular, this shows that the $x$-row (respectively, $x$-column)
of \ts $\bA(\alpha,1)$ \ts is identical to the $y$-row (respectively,
$y$-column) of \ts $\bA(\alpha,1)$ \ts whenever \. $x \sim y$.
	In this case,
	deduct the $y$-row and $y$-column of \ts $\bA(\alpha,1)$ \ts
	by the $x$-row and $x$-column of \ts $\bA(\alpha,1)$.
	It then follows from the claim that the resulting matrix has $y$-row and $y$-column is equal to zero.
Note that \eqref{eq:Hyp} is preserved under this transformation.

Now, apply the above linear transformation repeatedly,
and by restricting to the support of resulting matrix.  Since this
preserves \eqref{eq:Hyp},
it  suffices to prove that the following \ts $(r+1) \times (r+1)$ \ts  matrix   satisfies \eqref{eq:Hyp}:
	{\small
		\begin{align}\label{eq:B-uni}
			\bB \ = \ \begin{pmatrix}
				0 &   1 & \hspace{3 pt} \cdots \hspace{3 pt}
				& 1  & 1\\[2 pt] 		
				1 &  0
				& \cdots &   1 & 1  \\[2 pt]
				\vdots & \vdots & \ddots &  \vdots & \vdots \\[3 pt]
				1 & 1 & \cdots & 0  & 1 \\[3 pt]
				1 & 1 & \cdots & 1  & 1
			\end{pmatrix}\ts.
		\end{align}
	}

\nin
Note that \ts $\bB$ \ts has eigenvalue
\. $\la=-1$ \. with multiplicity~$(r-1)$.  Since \. $\det(\bB)=(-1)^r$,
we conclude that~$\bB$ has exactly one positive eigenvalue.
By Lemma~\ref{l:Hyp is OPE}, this implies
the result.
\end{proof}

\smallskip

We can now prove that every vertex in \ts $\Qf$ \ts is hyperbolic.

\smallskip

\begin{lemma}\label{t:abs}
	Let \ts $\Mf=(X,\Ic)$ \ts be a matroid on \ts $|X|=n$ \ts elements,
	and let \ts $1\le k <\rk(\Mf)$.
	Then every vertex in \ts $\Vf$ \ts satisfies \eqref{eq:Hyp}.
\end{lemma}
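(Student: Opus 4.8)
The plan is to prove hyperbolicity of every vertex by downward induction on the layer index $m$, using the local–global principle (Theorem~\ref{t:Hyp}) for the inductive step and Lemma~\ref{l:greedoid BHyp} for the base case. Concretely, the vertices of $\Qf$ are stratified as $\Vfm, \Vf^1, \ldots, \Vf^{k-1}$, and every outgoing edge of a vertex in $\Vf^m$ lands in $\Vf^{m-1}$ (with the last coordinate reset to $t=1$). Lemma~\ref{l:greedoid BHyp} already establishes \eqref{eq:Hyp} for all $\vf \in \Vfm$, which is the base of the induction. So it remains to show: if every vertex in $\Vf^{m-1}$ is hyperbolic, then every vertex in $\Vf^m$ is hyperbolic, for $1 \le m \le k-1$.

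First I would fix a vertex $\vf = (\alpha, m, t) \in \Vf^m$ and split into cases on $t$. If $\alpha \notin \Lc$, then by Lemma~\ref{l:greedoid Irr}\eqref{item:Irr 1} the matrix $\bM_\vf$ is the zero matrix, which trivially satisfies \eqref{eq:Hyp}. If $\alpha \in \Lc$ and $t \in (0,1)$, then $\vf$ is a regular vertex: Lemma~\ref{l:greedoid Irr}\eqref{item:Irr 2} gives \eqref{eq:Irr} and Lemma~\ref{l:greedoid Irr}\eqref{item:Irr 3} gives \eqref{eq:hPos}. The atlas satisfies \eqref{eq:Inh} by Lemma~\ref{l:greedoid Inh}, \eqref{eq:Iden} by construction (all $\bT^{\<x\>}$ are the identity), \eqref{eq:TPInv} by Lemma~\ref{l:greedoid TPInv}, and \eqref{eq:Decsupp} by Lemma~\ref{l:greedoid Decsupp}; hence by Theorem~\ref{t:Pull} the atlas satisfies \eqref{eq:PullEq}, and a fortiori \eqref{eq:Pull}. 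Since every out-neighbor $\vf^{\<x\>}$ of $\vf$ lies in $\Vf^{m-1}$ and is hyperbolic by the induction hypothesis, Theorem~\ref{t:Hyp} applies and yields that $\vf$ is hyperbolic.

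The remaining case is $\alpha \in \Lc$ and $t \in \{0,1\}$, where $\vf$ need not be regular (the vector $\hb_\vf$ has a zero coordinate, so \eqref{eq:hPos} may fail). Here I would use a limiting argument: the associated matrix $\bM_{(\alpha,m,t)} = t\,\bA(\alpha,m+1) + (1-t)\,\bA(\alpha,m)$ depends continuously (in fact affinely) on $t$, and property \eqref{eq:Hyp} is equivalent by Lemma~\ref{l:Hyp is OPE} to \eqref{eq:OPE}, i.e.\ to having at most one positive eigenvalue — a closed condition on symmetric matrices. Since \eqref{eq:Hyp} holds for all $t \in (0,1)$ by the regular case just treated, passing to the limit $t \to 0^+$ and $t \to 1^-$ shows $\bA(\alpha,m)$ and $\bA(\alpha,m+1)$ each have at most one positive eigenvalue, hence satisfy \eqref{eq:Hyp}; this covers the endpoint vertices. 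This completes the induction, and therefore every vertex in $\Vf$ satisfies \eqref{eq:Hyp}.

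The main obstacle I anticipate is the non-regularity at $t \in \{0,1\}$: one cannot invoke Theorem~\ref{t:Hyp} directly there, so the continuity/closedness argument via the eigenvalue characterization \eqref{eq:OPE} is essential. One should also double-check that the edge structure genuinely drops the layer index by exactly one and that $\Vf^0$ is identified with $\Vfm$ (the $t=1$ reset in the definition of the edges is what makes the base case match), so that the induction is well-founded over the finitely many layers $m = 0, 1, \ldots, k-1$.
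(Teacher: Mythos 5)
Your proposal is correct and follows essentially the same route as the paper: induction on the layer $m$ with Lemma~\ref{l:greedoid BHyp} as the base case, Theorem~\ref{t:Hyp} (with the hypotheses supplied by Lemmas~\ref{l:greedoid Irr}--\ref{l:greedoid Decsupp} and Theorem~\ref{t:Pull}) for regular vertices with $t\in(0,1)$, and a limit argument via the closedness of \eqref{eq:OPE} to handle $t\in\{0,1\}$. Your treatment of the endpoint cases is in fact slightly more explicit than the paper's one-line remark that \eqref{eq:Hyp} is preserved under limits.
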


\begin{proof}
	We induction on~$m$ to show that every vertex in \ts $\Vf^m$ \ts
	satisfies \eqref{eq:Hyp}, for all \ts $m \leq k-1$.
	The claim is true for \ts $m=0$ \ts by Lemma~\ref{l:greedoid BHyp}.
	Suppose that the claim is true for \ts $\Vf^{m-1}$.
	Now note that the atlas \ts $\AA$ \ts satisfies all the necessary properties:  \ts
	\eqref{eq:Inh} by Lemma~\ref{l:greedoid Inh}, \ts
	\eqref{eq:TPInv} by Lemma~\ref{l:greedoid TPInv}, \ts
	\eqref{eq:Decsupp} by Lemma \ref{l:greedoid Decsupp},
	and \ts \eqref{eq:Iden} by definition.
	It then follows from Theorem~\ref{t:Hyp} that  every regular vertex in \ts $\Vf^m$ \ts satisfies \eqref{eq:Hyp}.
	
	On the other hand, by Lemma~\ref{l:greedoid Irr}, the regular vertices of \ts $\Vf^m$ \ts
	are those of the form \ts $\vf=(\alpha,m,t)$ \ts with \ts $t \in (0,1)$.
	Since \eqref{eq:Hyp} is preserved under taking the limits \ts $t\to 0$ \ts and \ts $t\to 1$,
	it then follows that every vertex in $\Vf^m$ satisfies \eqref{eq:Hyp}. This completes the proof.
\end{proof}

\smallskip

\begin{proof}[Proof of Theorem~\ref{t:matroids-HSW}]
	Let \ts $\bM=\bM_{\vf}$ \ts be the matrix associated with the vertex \ts $\vf=(\varnothing,k-1,1)$.	
	Let \ts $\vb$ \ts and~$\ts \wb$ \ts be the characteristic vectors of \ts $X$ \ts and \ts $\{\spstar\}$, respectively.
	Then:
\begin{equation}\label{eq:greedoid-last}
{\aligned
		\< \vb, \bM \vb \> \ = \  (k+1)! \,\,  \aI(k+1)\ts, \hskip1.8cm \\
\< \vb, \bM \wb \> \ = \ k! \,\.  \aI(k) \qquad \text{and} \qquad \< \wb, \bM \wb \> \ = \ (k-1)! \,\.  \aI(k-1).
\endaligned}
\end{equation}	
	By Lemma~\ref{t:abs}, vertex \ts $\vf$ \ts satisfies \eqref{eq:Hyp}.
	Substituting~\eqref{eq:greedoid-last} into \eqref{eq:Hyp}, gives the
	inequality  in the theorem.  \end{proof}
\medskip

\subsection{Ultra-log-concavity}
In this section we extend the proof above to the obtain the strong Mason conjecture~\cite{Mas},
which was recently established independently by Anari~et.~al~\cite{ALOV} and Br\"and\'en--Huh~\cite{BH}.

\smallskip

\begin{thm}[{\rm  \defng{Ultra-log-concavity for matroids}, \cite[Thm~1.2]{ALOV} and \cite[Thm~4.14]{BH}, formerly \defna{strong Mason conjecture}}{}]
	\label{t:matroids-BH}
	For a matroid \ts $\Mf=(X,\Ic)$,  \ts $|X|=n$,  \ts and integer \ts $1\le k < \rk(\Mf)$, we have:
	\begin{equation}\label{eq:matroid-ULC}
		\rI(k)^2 \, \ge \, \left(1 \. + \, \frac{1}{k}\right)  \left(1 \. + \, \frac{1}{n-k}\right)  \,\.  \rI(k-1)  \   \rI(k+1)\ts.
	\end{equation}
\end{thm}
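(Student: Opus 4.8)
The plan is to mimic the proof of Theorem~\ref{t:matroids-HSW} but with a slightly modified combinatorial atlas that encodes the extra factor $\bigl(1+\tfrac{1}{n-k}\bigr)$. The key observation is that in the proof of Theorem~\ref{t:matroids-HSW}, the only place where we lost tightness is in the final sink-vertex estimate, Lemma~\ref{l:greedoid BHyp}, where we reduced the associated matrix to the $(r+1)\times(r+1)$ matrix $\bB$ of~\eqref{eq:B-uni}. The matrix $\bB$ has exactly one positive eigenvalue, which gives \eqref{eq:Hyp}, but the resulting inequality at the top vertex $\vf=(\varnothing, k-1,1)$ only yields \eqref{eq:matroid-ULC} without the $n-k$ improvement. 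To get the stronger bound, I would re-weight the $\spstar$-coordinate. Concretely, replace the associated vector coordinate $\ah_{\spstar}$ (and correspondingly the convex-combination parameter governing the $\spstar$ row/column of $\bM$) by a scaled version, so that the sink matrix becomes a matrix of the form: the $r\times r$ all-ones-off-diagonal block stays as is, but the last row/column entries become a parameter $c>1$ and the corner becomes $c^2$ (or some similar rescaling). One checks this rescaled matrix still has exactly one positive eigenvalue — indeed it is congruent to $\bB$ by scaling the last basis vector — so \eqref{eq:Hyp} still holds, while the quadratic form values at $\vb$ and $\wb$ pick up the extra factor.

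The concrete steps I would carry out: \emph{(i)} Define a modified atlas $\AA'$ identical to $\AA$ except that the $\spstar$-entries of the matrices $\bA(\alpha,m)$ are multiplied by a factor depending on $|\alpha|$ and $m$ — specifically chosen so that the inheritance property \eqref{eq:Inh} is preserved. The natural choice: set $\aA_{x\spstar}:=\lambda_{\ell,m}\,|\Cnt_{m-1}(\alpha x)|$ and $\aA_{\spstar\spstar}:=\lambda_{\ell,m}^2\,|\Cnt_{m-1}(\alpha)|$ for suitable scalars, and rescale $\ah_\spstar$ accordingly. \emph{(ii)} Re-verify Lemmas~\ref{l:greedoid Irr}, \ref{l:greedoid Inh}, \ref{l:greedoid TPInv}, \ref{l:greedoid Decsupp} for $\AA'$; these should go through essentially verbatim since rescaling a single coordinate is compatible with all four properties (the transposition-invariance \eqref{eq:TPInv} forces the scaling to be consistent across the three indices, which is exactly why $\aA_{\spstar\spstar}$ needs the square). \emph{(iii)} Re-prove the sink-vertex hyperbolicity, Lemma~\ref{l:greedoid BHyp}, for $\AA'$: after the same row/column reduction using the equivalence relation $x\sim y$, one arrives at a rescaled version of $\bB$, still with exactly one positive eigenvalue by a congruence argument (or a direct determinant/eigenvalue computation: the rescaled matrix has determinant of sign $(-1)^r$ and eigenvalue $-1$ with multiplicity $r-1$, arguing as before). \emph{(iv)} Run the induction of Lemma~\ref{t:abs} unchanged to conclude all vertices of $\AA'$ are hyperbolic. \emph{(v)} Finally, at the top vertex $\vf=(\varnothing,k-1,1)$, recompute $\langle\vb,\bM'\vb\rangle$, $\langle\vb,\bM'\wb\rangle$, $\langle\wb,\bM'\wb\rangle$ with $\vb,\wb$ the characteristic vectors of $X$ and $\{\spstar\}$: the $\wb$-involving terms acquire powers of the scaling factor, and choosing the scaling factor to be $\sqrt{\tfrac{n-k+1}{n-k}}$ (or whatever makes the bookkeeping work) turns the bare inequality into~\eqref{eq:matroid-ULC} with the desired $\bigl(1+\tfrac{1}{n-k}\bigr)$ factor.

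Alternatively — and this may be cleaner — rather than rescaling the atlas globally, I would keep the atlas $\AA$ exactly as defined and instead apply \eqref{eq:Hyp} at the top vertex not to the pair $(\vb,\wb)$ directly but to the pair $\bigl(\vb, \vb + c\,\wb\bigr)$ or $\bigl(\vb - c\,\wb,\,\wb\bigr)$ for an optimally chosen constant $c$, and then optimize over $c$. This extracts the best possible quadratic-form inequality from a single instance of \eqref{eq:Hyp}; the optimal $c$ and the resulting constant would be computed from the $3\times 3$ Gram-type data in~\eqref{eq:greedoid-last} together with one more quantity, namely $\langle \wb,\bM\vb\rangle$ versus the "internal" structure — but since \eqref{eq:Hyp} with a fixed matrix only gives a rank-two obstruction, optimizing over linear combinations of $\vb,\wb$ gives nothing beyond the determinant inequality already used. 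So this shortcut does \emph{not} suffice, and the genuine improvement must come from a better sink matrix, i.e.\ from the rescaling in the first approach. The main obstacle, then, is step~\emph{(iii)}: verifying that the rescaled sink matrix $\bB'$ still satisfies \eqref{eq:OPE}, and pinning down the exact scaling factor so that (a) \eqref{eq:Inh} survives for all intermediate vertices and (b) the final substitution produces precisely the constant $\bigl(1+\tfrac1k\bigr)\bigl(1+\tfrac1{n-k}\bigr)$ rather than something weaker. I expect the correct normalization involves replacing $\bA(\alpha,m)$ by a matrix whose $\spstar$-block is weighted by a factor like $\tfrac{\rk(\Mf)-|\alpha|-m+1}{\rk(\Mf)-|\alpha|-m}$ or, more likely given the clean form of the answer, a factor tied to $n - |\alpha| - \text{(something)}$, and getting this bookkeeping exactly right while preserving all four structural properties is the delicate part.
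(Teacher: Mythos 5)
Your overall architecture is the paper's: reweight the entries of \ts $\bA(\alpha,m)$, re-verify \eqref{eq:Inh}, \eqref{eq:TPInv}, \eqref{eq:Decsupp}, redo the sink-vertex lemma, rerun the induction, and recompute the three quadratic forms at \ts $\vf=(\varnothing,k-1,1)$. But the concrete modification you propose --- multiplying \ts $\aA_{x\spstar}$ \ts by \ts $\lambda$ \ts and \ts $\aA_{\spstar\spstar}$ \ts by \ts $\lambda^2$ \ts while leaving \ts $\aA_{xy}$ \ts alone --- is a diagonal congruence \ts $\bM\mapsto \bD\bM\bD$ \ts in the \ts $\spstar$-coordinate, and such a congruence cannot improve the final inequality: \ts $\<\vb,\bD\bM\bD\wb\>^2$ \ts picks up \ts $\lambda^2$ \ts and \ts $\<\vb,\bD\bM\bD\vb\>\<\wb,\bD\bM\bD\wb\>$ \ts picks up the same \ts $\lambda^2$, so the factors cancel in step~(v) and you recover only Theorem~\ref{t:matroids-HSW}. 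Your own ``alternatively'' paragraph proves exactly this point (optimizing over linear combinations of \ts $\vb,\wb$ \ts extracts nothing new from a fixed hyperbolic matrix), yet the ``better sink matrix'' you then fall back on is, by your own description, \emph{congruent} to \ts $\bB$. This is the internal contradiction that sinks the proposal as written.

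The paper's reweighting is genuinely non-congruent: the entry counting continuations of a set of total cardinality \ts $s$ \ts is multiplied by \ts $\ac_s$, where \ts $\ac_{k+1}=1+\frac1{n-k}$ \ts and \ts $\ac_s=1$ \ts otherwise. Since the \ts $xy$, \ts $x\spstar$ \ts and \ts $\spstar\spstar$ \ts entries of \ts $\bA(\alpha,m)$ \ts count sets of sizes \ts $\ell+m+1$, \ts $\ell+m$, \ts $\ell+m-1$, the three weights are \ts $\ac_{\ell+m+1},\ac_{\ell+m},\ac_{\ell+m-1}$, and a congruence would force \ts $\ac_{\ell+m+1}\ts\ac_{\ell+m-1}=\ac_{\ell+m}^2$, which fails precisely at the critical level. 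At the top vertex this puts the factor \ts $1+\frac1{n-k}$ \ts on the \ts $X\times X$ \ts block alone, inflating \ts $\<\vb,\bM\vb\>$ \ts and nothing else --- which is what strengthens the conclusion. The price is paid at the sink vertices with \ts $\ell=k-1$: after normalization the corner of \ts $\bB'$ \ts becomes \ts $\frac{\ac_{\ell+2}\ts\ac_{\ell}}{\ac_{\ell+1}^2}=1+\frac1{n-k}>1$, the matrix is no longer congruent to the all-ones pattern, and \eqref{eq:OPE} must be checked by the determinant computation, which succeeds only because the number of parallel classes satisfies \ts $r\le n-k+1$ \ts by \eqref{eq:parallel class}. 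That bound is exactly where the matroid structure pays for the extra constant, and it is absent from your proposal; without identifying it, the ``delicate bookkeeping'' you defer in step~(iii) cannot be completed.
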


\smallskip

\nin
Note that in~\cite{CP}, we used the same this proof technique to obtain an even stronger version
of \eqref{eq:matroid-ULC}, see \cite[\S1.4]{CP} for details. In this paper we present
only the proof of \eqref{eq:matroid-ULC} for simplicity.

\smallskip

\begin{proof}[Proof of Theorem~\ref{t:matroids-BH}]
We proceed verbatim the proof above with minor changes.  First, we modify the
definition~\eqref{eq:A-uni} of the symmetric \ts $\ar \times \ar$ \ts
matrix \ts $\bA(\alpha,m)$ \ts as follows:
\begin{align*}
	& \aA_{x\ts y} \, := \, \ac_{\ell+m+1} \,  \bigl| \Cnt_{m-1}(\alpha xy)  \bigr|  \qquad  \text{for  \ $x,y \in \Cnt(\alpha)$,}\\
	& \aA_{x\.\spstar} \, = \, \aA_{\spstar \. x} \, := \,  \ac_{\ell+m} \, \bigl| \Cnt_{m-1}(\alpha x)  \bigr|
	\qquad \text{for \ \ $x \in \Cnt(\alpha)$,}\\
	& \aA_{\spstar\. \spstar} \, := \, \ac_{\ell+m-1} \,  \bigl| \Cnt_{m-1}(\alpha)  \bigr|,
\end{align*}
where \. $\ac_{i} := 1+\frac{1}{n-k}$ \. if  \ts $i=k+1$,
and  \. $\ac_{i} := 1$ \. otherwise.  Then the intermediate equation~\eqref{eq:sum-uni} becomes
	\begin{equation*}%\label{eq:Inh greedoid 1}
		\begin{split}			
			&  \  \sum_{z \ts\in\ts X}  \,    t \, \ac_{\ell+m+2} \,  |\Cnt_{m-1}(\alpha xyz)|
			\ + \   (1-t) \, \ac_{\ell+m+1} \,  |\Cnt_{m-1}(\alpha xy)|
			\\
			& \quad  = \   t \, \ac_{\ell+m+2} \,  |\Cnt_{m}(\alpha xy)| \ +  \  (1-t) \, \ac_{\ell+m+1} \,  |\Cnt_{m-1}(\alpha xy)|.
		\end{split}
	\end{equation*}
but the conclusion of Lemma~\ref{l:greedoid Inh} remains valid.

Similarly, the equation~\eqref{eq:TP-inv-uni} becomes
\begin{align*}
\aM_{yz}^{\<x\>} \ = \ \bA(\alpha x,m)_{yz} \ = \ \ac_{|\alpha|+m+2} \,  |\Cnt_{m-1}(\alpha xyz)|\ts.
\end{align*}
and the conclusion of  Lemma~\ref{l:greedoid TPInv} remains valid.

Finally, the proof of Lemma~\ref{l:greedoid BHyp} is more technical in this case.
First, the equation~\eqref{eq:Hyp-uni} becomes:
\begin{equation}\label{eq:Hyp-non}
\aA_{xy} \ = \
	\begin{cases}
		\ac_{\ell+2} & \text{ if  \. $y \in X$ \. and \. $y \not \sim x$},\\
		0 & \text{ if  \. $y \in X$ \. and \. $y \sim x$},\\
		\ac_{\ell+1} & \text{ if  \. $y =\spstar$}.
	\end{cases}
\end{equation}
Next, matrix \ts $\bB$ \ts in~\eqref{eq:B-uni} is now
{\small
		\begin{align}
			\bB \ = \ \begin{pmatrix}
				0 &   \ac_{\ell+2} & \hspace{3 pt} \cdots \hspace{3 pt}
				& \ac_{\ell+2} &    \ac_{\ell+1}  \\[2 pt] 		
				\ac_{\ell+2} &  0
				&  &   \vdots  &   \vdots  \\[2 pt]
				\vdots &  & \ddots &  \ac_{\ell+2} & \ac_{\ell+1}  \\[3 pt]
				\ac_{\ell+2} & \cdots & \ac_{\ell+2} &
				0 & \ac_{\ell+1} \\[3 pt]
				\ac_{\ell+1} & \ldots & \ac_{\ell+1} & \ac_{\ell+1} &  \ac_{\ell}
			\end{pmatrix}.
		\end{align}
	}

\nin
	Rescale the $i$-th row and $i$-th column ($i \leq r$)
	by \. $\frac{1}{\sqrt{\ac_{\ell+2}}}$,
	and the $(r+1)$-th row and $(r+1)$-th column by
	$\frac{\sqrt{\ac_{\ell+2}}}{\ac_{\ell+1}}$.
	Note that \eqref{eq:Hyp} is preserved under this transformation.
	The matrix becomes
{\small
	\begin{align*}
	\bB' \ = \	\begin{pmatrix}
			0 &   1 & \hspace{3 pt} \cdots \hspace{3 pt}
			& 1 &    1  \\[2 pt] 		
			1 &  0
			&  &   \vdots  &   \vdots  \\[2 pt]
			\vdots &  & \ddots &  1 & 1  \\[3 pt]
			1 & \cdots & 1 &
			0  & 1 \\[3 pt]
			1 & \ldots & 1 & 1 &  \frac{\ac_{\ell+2} \, \ac_{\ell}}{ \ac_{\ell+1}^2}
		\end{pmatrix}.
	\end{align*}
}	

\nin
By Lemma~\ref{l:Hyp is OPE}, it suffices to show that \ts $\bB'$ \ts
has exactly one positive eigenvalue.  Indeed, observe that \ts $\lambda=-1$ \ts
is an eigenvalue of this matrix with multiplicity \ts $(r-1)$.
	So this matrix has exactly one positive eigenvalue if and only if
	the determinant of the matrix has sign \ts $(-1)^r$.
	On the other hand, the determinant of this matrix is equal to
	\[ (-1)^r \, \bigg[ \frac{\ac_{\ell+2} \, \ac_{\ell}}{ \ac_{\ell+1}^2} (1-r) +r \bigg]. \]
	Now note that
$$
\frac{\ac_{\ell+2} \, \ac_{\ell}}{ \ac_{\ell+1}^2} \ = \ \left\{
\aligned 1 \qquad \ & \text{if \ \ $\ell <k-1$,} \\
1+\frac{1}{n-k} \quad \  & \text{if \ \ $\ell =k-1$.}
\endaligned\right.
$$
	In both cases, the determinant above has sign \ts $(-1)^r$ \ts since \ts $r \leq n-k+1$ \ts by \eqref{eq:parallel class}.
	This completes the proof of Lemma~\ref{l:greedoid BHyp} in this case ad finished the proof of property~\eqref{eq:Hyp}.

Finally, in the proof of Theorem~\ref{t:matroids-BH}, equation~\eqref{eq:greedoid-last} is modified as
\begin{equation}
{\aligned
		\< \vb, \bM \vb \> \ = \  \left(1+\frac{1}{n-k} \right) \, (k+1)! \,\,  \aI(k+1)\ts, \hskip1.8cm \\
\< \vb, \bM \wb \> \ = \ k! \,\.  \aI(k) \qquad \text{and} \qquad \< \wb, \bM \wb \> \ = \ (k-1)! \,\.  \aI(k-1).
\endaligned}
\end{equation}	
The rest of the proof is unchanged again.
\end{proof}

\medskip

\subsection{Abstract simplicial complex}
An \defna{abstract simplicial complex} is a pair $(X,\Delta)$, where $X$ is the \defng{ground set} and \. $\Delta \subseteq 2^X$  \. is a collection of subsets of $X$ that satisfies \defng{hereditary property}: \.
$S\subset T$, \. $T\in \Delta$ \, $\Rightarrow$ \, $S\in \Delta$\ts.
The subsets in \ts $\Delta$ \ts are called the \defnb{faces} of the simplicial complex.
Note that a matroid is an abstract simplicial complex that additionally satisfies
the \defng{exchange property}.  The \defnb{rank} of \ts $\Delta$,
denoted by \ts $\rk(\Delta)$, is the largest cardinality of any of its faces.
Note that the \defnb{dimension} $\dim(\Delta)$  of $\Delta$ is equal to $\rk(\Delta)-1$.

Let $k \in \{1,\ldots, \rk(\Delta)-1\}$.  We now define the combinatorial atlas
\ts $\AA(\Delta,k)$ \ts  verbatim the same way combinatorial atlas for a matroid
is defined in~$\S$\ref{ss:Combinatorial-atlas-matroid}.  Note that the exchange
property is never used in the definition, so when \ts $\Delta$ \ts is a matroid \ts $\Mf$,
the atlas \ts $\AA(\Delta,k)$ \ts is equal to the atlas \ts $\AA(\Mf,k)$.

\smallskip

\begin{thm}\label{t:simp}
Let \ts $\Delta$ \ts be a simplicial complex.  Suppose that
every vertex in \ts $\AA(\Delta,k)$ \ts satisfies \eqref{eq:Hyp}, for all \. $1\le k <\rk(\Delta)$.
Then \ts $\Delta$ \ts is a matroid.
\end{thm}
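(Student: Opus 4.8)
Here is the plan.

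\medskip

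\noindent\emph{Strategy.} The plan is to prove the contrapositive: assuming $\Delta$ is \emph{not} a matroid, I will exhibit a single vertex of one of the atlases $\AA(\Delta,k)$, with $1\le k<\rk(\Delta)$, whose associated matrix violates \eqref{eq:Hyp}. By Lemma~\ref{l:Hyp is OPE} it is enough to produce an associated matrix with at least two positive eigenvalues, and the natural place to look is a \emph{sink} vertex $\vf=(\alpha,0,1)$: there $\bM_{\vf}=\bA(\alpha,1)$ has entries $\aA_{xy}=[\.\alpha xy\in\Lc\.]$ for $x,y\in\Cnt(\alpha)$, all-ones on the $\spstar$ row and column, and $\aA_{\spstar\spstar}=1$, so its spectrum is controlled entirely by the ``compatibility pattern'' of $\Cnt(\alpha)$. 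Once I find a feasible word $\alpha$ for which that pattern is not ``complete multipartite'', I expect a fixed $4\times 4$ obstruction to appear inside $\bA(\alpha,1)$.

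\medskip

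\noindent\emph{The combinatorial engine.} The key claim I would isolate is: \emph{if $\Delta$ is not a matroid, there exist a feasible word $\alpha$ and three distinct elements $x,y,z\in\Cnt(\alpha)$ with $\alpha xz\in\Lc$ but $\alpha xy\notin\Lc$ and $\alpha yz\notin\Lc$} (i.e.\ the relation ``$u\sim v\iff\alpha uv\notin\Lc$'' on $\Cnt(\alpha)$ fails transitivity). To prove it I would use the classical characterization that $\Delta$ is a matroid iff every induced subcomplex $\Delta_W:=\{F\in\Delta:F\subseteq W\}$ is pure (all inclusion-maximal faces of the same size); the nontrivial direction is immediate — if all $\Delta_W$ are pure and $S,T\in\Delta$ with $|S|<|T|$, extend $S$ to a facet $G$ of $\Delta_{S\cup T}$, which by purity and $T\in\Delta_{S\cup T}$ has $|G|\ge|T|>|S|$, so some $x\in G\setminus S\subseteq T\setminus S$ satisfies $S+x\in\Delta$. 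Thus if $\Delta$ is not a matroid, pick $W$ \emph{minimal} with $\Delta_W$ not pure, let $F_1$ be a facet of $\Delta_W$ of minimum size $a$ and $F_2$ a facet of larger size. A short analysis of the minimality of $W$ should give: $F_1$ remains a facet of $\Delta_{W-w}$ for $w\notin F_1$, forcing $W\setminus F_1\subseteq F_2$ and hence $W=F_1\cup F_2$; then $|F_2|=a+1$ (since for $w\in F_2\setminus F_1$ the face $F_2-w$ lives in the pure complex $\Delta_{W-w}$ of rank $a$); and $F_1\setminus F_2\neq\varnothing$ (else $F_1\subsetneq F_2$). Fixing $w_0\in F_1\setminus F_2$ and setting $C:=F_1-w_0$ (a face of size $a-1$), the complex $\Delta_{W-w_0}$ is pure of rank $\ge|F_2|=a+1\ge|C|+2$, so $C$ lies in a facet $D$ of it with $|D\setminus C|\ge 2$ and $D\setminus C\subseteq(W-w_0)\setminus C=F_2\setminus F_1$; choosing distinct $x,z\in D\setminus C$ gives $C+x+z\subseteq D\in\Delta$ while $C+w_0+x=F_1+x\notin\Delta$ and $C+w_0+z=F_1+z\notin\Delta$ because $F_1$ is a facet of $\Delta_W$. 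Taking $y:=w_0$ and $\alpha$ any word spelling $C$ proves the claim. I expect this structural analysis of the minimal $W$ to be the main obstacle.

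\medskip

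\noindent\emph{Conclusion from the atlas.} With the claim in hand, note $a=|F_1|\ge 1$ and $\rk(\Delta)\ge|F_2|=a+1$, so $k:=a$ satisfies $1\le k<\rk(\Delta)$ and $\vf=(\alpha,0,1)$ is a sink vertex of $\AA(\Delta,a)$ since $|\alpha|=|C|=a-1=k-1$. Restricting $\bM_{\vf}=\bA(\alpha,1)$ to the four coordinates $\{x,y,z,\spstar\}$ yields exactly
\[
M' \ = \ \begin{pmatrix} 0 & 0 & 1 & 1 \\ 0 & 0 & 0 & 1 \\ 1 & 0 & 0 & 1 \\ 1 & 1 & 1 & 1 \end{pmatrix},
\]
for which $\det M'=1>0$ and $\tr M'=1>0$. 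A real symmetric $4\times 4$ matrix with positive determinant and positive trace cannot have at most one positive eigenvalue — one positive and three negative would make the determinant negative, and four negative would make the trace negative — so $M'$ has at least two positive eigenvalues, and by Cauchy interlacing so does $\bA(\alpha,1)$. Hence $\bA(\alpha,1)$ violates \eqref{eq:OPE}, so by Lemma~\ref{l:Hyp is OPE} the vertex $(\alpha,0,1)$ of $\AA(\Delta,a)$ violates \eqref{eq:Hyp}, contradicting the hypothesis. Therefore $\Delta$ is a matroid.
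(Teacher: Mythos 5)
Your proof is correct, and the linear--algebra core is the same as the paper's: both arguments ultimately exhibit the same $4\times 4$ principal submatrix of a sink matrix $\bA(\alpha,1)$ (yours is the paper's matrix up to permuting $x$ and $y$), check that it has two positive eigenvalues, and invoke Lemma~\ref{l:Hyp is OPE}. Where you genuinely diverge is the combinatorial derivation. The paper runs the implication forwards: it first proves Lemma~\ref{l:abs}, a \emph{local} exchange statement ($U+x\in\Delta$ and $U\cup\{y,z\}\in\Delta$ force $U\cup\{x,y\}\in\Delta$ or $U\cup\{x,z\}\in\Delta$) directly from hyperbolicity of the sink vertices, and then bootstraps to the full exchange property by induction on $|S\setminus T|$. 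You instead argue by contraposition: assuming $\Delta$ is not a matroid, you extract a single violating triple $(x,y,z)$ over a feasible $\alpha$ via the ``every induced subcomplex is pure'' characterization and a minimal non-pure restriction $W=F_1\cup F_2$ with $|F_2|=|F_1|+1$. Your structural analysis of the minimal $W$ checks out (each step --- $W\setminus F_1\subseteq F_2$, $|F_2|=a+1$, $F_1\setminus F_2\neq\varnothing$, and the choice of $x,z\in D\setminus C\subseteq F_2\setminus F_1$ --- is sound, and $a\ge 1$, $k=a<\rk(\Delta)$ place the sink vertex in a legitimate atlas). The trade-off: the paper's induction on $|S\setminus T|$ is shorter once Lemma~\ref{l:abs} is in hand, while your route isolates a completely explicit single obstruction and makes the logical structure of the converse transparent, at the cost of the minimal-counterexample bookkeeping. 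Your determinant/trace argument for ``at least two positive eigenvalues'' and the interlacing step are both fine (the paper instead restricts \eqref{eq:Hyp} to vectors supported on $\{x,y,z,\spstarr\}$, which is equivalent).
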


\smallskip

In other words, we show that the exchange property is necessary for the proof of
Lemma~\ref{t:abs}, so Theorem~\ref{t:simp} can be viewed as a converse to
Lemma~\ref{t:abs}.  The proof is based on he following quick calculation.

\smallskip

\begin{lemma}\label{l:abs}
In conditions of the theorem, let \ts $U \in \Delta$, and let \ts $x \in X \setminus U$ \ts such that
\ts $U\cup \{x\} \in \Delta$.  Then, for every distinct \ts $y,z \in X \setminus U$ \ts
such that \ts $U \cup \{y,z\} \in \Delta$, we have either \ts $U \cup \{x,y\} \in \Delta$ \ts
or \ts $U\cup \{x,z\} \in \Delta$.
\end{lemma}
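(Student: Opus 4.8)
The plan is to test the hyperbolicity hypothesis of Theorem~\ref{t:simp} on one carefully chosen sink vertex of the atlas $\AA(\Delta,k)$, a vertex whose associated matrix has a $4\times4$ principal submatrix that ``sees'' exactly the three sets $U\cup\{x,y\}$, $U\cup\{x,z\}$, $U\cup\{y,z\}$.

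First I would dispose of the degenerate case: if $x\in\{y,z\}$ then one of $U\cup\{x,y\}$, $U\cup\{x,z\}$ equals $U\cup\{x\}\in\Delta$ and there is nothing to prove, so I may assume $x,y,z$ are pairwise distinct. Then I would set $k:=|U|+1$; since $U\cup\{y,z\}\in\Delta$ forces $\rk(\Delta)\ge|U|+2$, we get $1\le k<\rk(\Delta)$, so $\AA(\Delta,k)$ is defined and its hypothesis applies. Take any word $\alpha\in X^*$ spelling out the elements of $U$ (each exactly once), so $|\alpha|=k-1$ and $\alpha\in\Lc$; consider the sink vertex $\vf=(\alpha,0,1)\in\Vfm$, whose associated matrix is $\bM_\vf=\bA(\alpha,1)$ by the construction in $\S$\ref{ss:Combinatorial-atlas-matroid}. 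Next I would unwind~\eqref{eq:A-uni} in the case $m=1$: since $|\Cnt_0(\beta)|$ equals $1$ if $\beta\in\Lc$ and $0$ otherwise, the matrix $\bA(\alpha,1)$ has entries $\bA(\alpha,1)_{pq}=[\,U\cup\{p,q\}\in\Delta\,]$ for distinct $p,q\in\Cnt(\alpha)$, while $\bA(\alpha,1)_{p\spstar}=1$ for $p\in\Cnt(\alpha)$, $\bA(\alpha,1)_{\spstar\spstar}=1$, and the $X$-diagonal vanishes. Here $x\in\Cnt(\alpha)$ since $U\cup\{x\}\in\Delta$, and $y,z\in\Cnt(\alpha)$ since $U\cup\{y\},U\cup\{z\}\in\Delta$ by the hereditary property; so $x,y,z,\spstar$ all lie in $\supp(\bM_\vf)$.

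Now suppose, for contradiction, that both $U\cup\{x,y\}$ and $U\cup\{x,z\}$ are non-faces. Then the $4\times4$ principal submatrix $\bC$ of $\bM_\vf$ on the indices $x,y,z,\spstar$ is completely explicit: among $\{x,y,z\}$ only $\bC_{yz}=1$ off the (vanishing) diagonal, and every entry involving $\spstar$ equals $1$, including $\bC_{\spstar\spstar}=1$. Restricting~\eqref{eq:Hyp} for $\bM_\vf$ (which holds by the hypothesis of Theorem~\ref{t:simp}) to vectors supported on $\{x,y,z,\spstar\}$ shows that $\bC$ itself satisfies~\eqref{eq:Hyp}, so by Lemma~\ref{l:Hyp is OPE} it has at most one positive eigenvalue. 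But a one-line computation gives $\tr(\bC)=1>0$ and $\det(\bC)=1>0$, and for a nonsingular symmetric $4\times4$ matrix a positive determinant forces an even number of negative eigenvalues while a positive trace rules out the possibility of four negative eigenvalues; hence $\bC$ has at least two positive eigenvalues, a contradiction that proves the lemma.

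The calculations (the entries of $\bA(\alpha,1)$ and the $4\times4$ trace and determinant) are routine; the delicate points are the rank bookkeeping that makes the vertex $(\alpha,0,1)$ legitimate, and the realization that the empty-letter coordinate $\spstar$ must be included — the $3\times3$ submatrix on $\{x,y,z\}$ has eigenvalues $0,1,-1$ and so would not contradict hyperbolicity. I expect this last point, pinning down the correct submatrix, to be the main thing to get right.
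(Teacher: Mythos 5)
Your proof is correct and takes essentially the same route as the paper's: both restrict the hyperbolic matrix $\bA(\alpha,1)$ to the $4\times 4$ principal submatrix on $\{x,y,z,\spstar\}$ and derive a contradiction with Lemma~\ref{l:Hyp is OPE} by showing that matrix would have more than one positive eigenvalue. Your trace/determinant signature count simply makes explicit the eigenvalue claim the paper leaves unverified.
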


\smallskip	
	
	\begin{proof}
		The claim is trivial when \ts $x=y$ \ts or \ts $x=z$,
		so we can assume that \ts $x,y,z\in X \setminus U$ \ts are all distinct.
		Let \ts $\alpha=x_1\ldots x_\ell \in \Lc$ \ts be any feasible word such that \ts $U=\{x_1,\ldots, x_\ell\}$, and let $\bA:=\bA(\alpha,1)$.
		It follows from the assumption of the theorem that $\bA$ satisfies \eqref{eq:Hyp}.
		Let $\bA'$ be the restriction of $\bA$ to the rows and columns indexed by \ts $\{x,y,z,\spstar\}$.
		Then \ts $\bA'$ \ts also satisfies \eqref{eq:Hyp}.
			Now, suppose to the contrary that \ts $U \cup \{x,y\}\notin \Delta$ \ts and
		\ts $U \cup \{x,z\}\notin \Delta$.
		Then
		\[
		\bA'  \ = \  \begin{bmatrix}
			0 & 0 & 0 & 1\\
			0 & 0 & 1 & 1\\
			0 & 1 & 0 & 1\\
			1& 1 & 1 & 1
		\end{bmatrix} \qquad \text{ or } \qquad
	\bA'  \ = \  \begin{bmatrix}
		0 & 0 & 0 & 1\\
		0 & 0 & \frac{1}{n-1} & 1\\
		0 & \frac{1}{n-1} & 0 & 1\\
		1& 1 & 1 & 1
	\end{bmatrix},
		\]
where $n:=|X|>1$. In both cases, matrix \ts $\bA'$ \ts has more than one positive eigenvalue.
Now Lemma~\ref{l:Hyp is OPE} gives a contradiction.
	\end{proof}
	
	\smallskip
	
\begin{proof}[Proof of Theorem~\ref{t:simp}]
Recall the statement of the exchange property:  \ts for all
\ts $S,T \in \Delta$ \ts such that \ts $|S| < |T|$,
there exists \ts $y \in T \setminus S$  \ts such that
\ts $S \cup \{y\} \in \Delta$.
We this	by induction on  \. $i:=|S \setminus T|$.
The base case \ts $i=0$ \ts is trivial, so suppose that \ts $i\ge 1$.

	Let \. $x \in S \setminus T$, and let \. $U:=S \setminus x$.
	Since \. $|U\setminus T|= i-1$, by the induction assumption
	there exists distinct \. $y,z \in T \setminus U$ \. such that \. $U \cup \{y,z\}\in \Delta$.
	By the lemma above, it then follows that either \. $U\cup \{x,y\} \in \Delta$ \.
 or \. $U\cup \{x,z\} \in \Delta$.  By relabeling if necessary, we can assume that
\. $U\cup \{x,y\} \in \Delta$.
Then we have \. $S \cup \{y\} = U \cup \{x,y\}$, which proves the exchange property,
as desired.
\end{proof}

\bigskip

\section{Lorentzian polynomials}\label{s:Lorentzian}

In this section we will show that the theory of Lorentzian polynomials introduced by
Br\"and\'en and  Huh in \cite{BH}, can be expressed as a special case of our theory
of the combinatorial atlas.\footnote{In Anari~et.~al~\cite{ALOV}, a related notion of
\emph{strongly log-concave polynomials} was introduced.  We will focus only on
Lorentzian polynomials for simplicity of exposition.}
We refer to the aforementioned paper for further references on this topic.

\subsection{Background}
Let \ts $n,\ir\ge 0$ \ts be nonnegative integers.
We denote \ts $H_n^\ir$ \ts the set of degree $\ir$ homogeneous polynomials in \ts $\Rb[w_1,\ldots, w_n]$.
The \defn{Hessian} of $f \in \Rb[w_1,\ldots, w_n]$ is the symmetric matrix
\[ \Hr_f(w) \ := \  \big(\partial_i \partial_j f\big)_{i,j=1}^n,\]
where $\partial_i$ stands for the partial derivative \ts $\frac{\partial}{\partial w_i}$.

For every $\mb=(m_1,\ldots, m_n) \in \nn^n$, we write
\[ w^{\mb} \ := \ w_1^{m_1} \ldots w_n^{m_n} \quad \text{ and } \quad \partial^{\mb} \ := \ \partial_1^{\mb_1} \ldots  \partial_n^{\mb_n}.  \]
The \defn{$\ir$-th discrete simplex} $\Delta_n^{\ir} \subseteq \nn^n$ is
\[
\Delta_n^{\ir} \ := \  \bigl\{ \mb \in \nn^n \, : \, m_1 + \ldots + m_n = \ir    \bigr\}.
\]
The \defn{support} of a polynomial $f$ is the subset of $\nn^n$ defined by
\[ \supp(f) \ := \  \{ \mb \in \nn \, : \, \text{the coefficient of $w^{\mb}$ in $f$ is nonzero} \}. \]

A subset $J \subseteq \nn^n$ is \defn{M-convex} if,
for every $\mb,\nb \in J$ and every \ts $i\in [n]$ \ts s.t.\ $m_i > n_i$, there exists \ts $j\in [n]$ \ts
s.t.\  \. $m_j < n_j$ \. and \. $\mb-\eb_i+\eb_j \in J$, where $\eb_1,\ldots, \eb_n$ is the standard basis in~$\Rb^n$.

\smallskip

\begin{definition}\label{d:Lorentzian}
	Let $f$ be a homogeneous polynomial of degree $\ir$ with nonnegative coefficients.
	Then $f$ is \defn{Lorentzian} if the support of $f$ is M-convex, and
		the Hessian of \ts $\partial^{\mb} f$ \ts has at most one positive eigenvalue,
for every \ts $\mb \in \Delta_n^{\ir-2}$.
\end{definition}

\medskip

\subsection{Combinatorial atlas for Lorentzian polynomials}

In this section, we  define a combinatorial atlas that arises naturally from Lorentzian polynomials.
As a byproduct of this identification, we recover the following basic fact of Lorentzian polynomials.

\smallskip

\begin{thm}[{\rm cf.\ {\cite[Theorem~2.16(2)]{BH}}}{}]\label{thm:Lorentzian}
	Let $f$ be a Lorentzian polynomial. Then the Hessian of $f$ satisfies \eqref{eq:Hyp} for every $(w_1,\ldots, w_n)\in \Rb^n_{>0}$.
\end{thm}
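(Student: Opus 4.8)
The plan is to attach to the Lorentzian polynomial $f$, together with a fixed point $w=(w_1,\ldots,w_n)\in\Rb^n_{>0}$, a finite combinatorial atlas whose root vertex carries a positive multiple of $\Hr_f(w)$, and then to run the local--global principle (Theorem~\ref{t:Hyp}) downward from the sinks. The cases $\ir\le 1$ are trivial since then $\Hr_f\equiv 0$, so assume $\ir\ge 2$. I would take the poset to be $\{\mb\in\nn^n:|\mb|\le\ir-2\}$ ordered componentwise, so that its Hasse diagram $\Qf$ has, from each $\mb$ with $|\mb|<\ir-2$, the $n$ outgoing edges $\mb\to\mb+\eb_i$ ($i\in[n]$), while its sink vertices are exactly the $\mb$ with $|\mb|=\ir-2$. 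Set the dimension $\ar:=n$; assign to the vertex $\mb$ the associated matrix $\bM_{\mb}:=(\ir-|\mb|-2)!\,\Hr_{\partial^{\mb}f}(w)$ and the associated vector $\hb_{\mb}:=w$; and let every $\bT^{\<i\>}$ be the identity map. Since $f$ has nonnegative coefficients and $w>0$, each $\bM_{\mb}$ is symmetric with nonnegative diagonal and each $\hb_{\mb}$ is nonnegative, so this is a legitimate atlas. Because \eqref{eq:Hyp} is unaffected by scaling $\bM$ by a positive constant, it suffices to show that every vertex is hyperbolic and then read off the case $\mb=\mathbf{0}$.

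For a sink $\mb$ the polynomial $\partial^{\mb}f$ is homogeneous of degree $2$, so $\bM_{\mb}=\Hr_{\partial^{\mb}f}(w)$ is a constant matrix which, by the definition of Lorentzian, satisfies \eqref{eq:OPE}; by Lemma~\ref{l:Hyp is OPE} it satisfies \eqref{eq:Hyp}. For the non-sink vertices I would verify the hypotheses of Theorem~\ref{t:Hyp}, obtaining \eqref{eq:Pull} through Theorem~\ref{t:Pull}. Property \eqref{eq:Iden} holds by construction; \eqref{eq:Inh} reduces, since $\bT^{\<i\>}=\mathrm{id}$, to the identity $(\bM_{\mb}\vb)_i=\langle\vb,\bM_{\mb+\eb_i}\,w\rangle$, which follows from $\partial^{\mb+\eb_i}f=\partial_i\partial^{\mb}f$ together with Euler's relation $\sum_l w_l\,\partial_l g(w)=(\deg g)\,g(w)$ applied to $g=\partial_i\partial_j\partial^{\mb}f$ --- both sides then equal $(\ir-|\mb|-2)!\sum_j v_j\,\partial_i\partial_j\partial^{\mb}f(w)$, which is precisely what forces the factorial normalization. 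Property \eqref{eq:TPInv} holds because $\aM^{\<i\>}_{jk}$ is the common scalar $(\ir-|\mb|-3)!$ times $\partial_i\partial_j\partial_k\partial^{\mb}f(w)$, visibly symmetric in $i,j,k$; and \eqref{eq:Decsupp} holds because $\partial_i\partial_k\partial_l\partial^{\mb}f\not\equiv 0$ forces $\partial_k\partial_l\partial^{\mb}f\not\equiv 0$.

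The step I expect to be the main obstacle is checking that each non-sink vertex is regular, i.e.\ satisfies \eqref{eq:Irr} and \eqref{eq:hPos}; this is the one place where M-convexity of $\supp(f)$, rather than mere nonnegativity of coefficients, is needed. If $\partial^{\mb}f\equiv 0$ then $\bM_{\mb}=0$ is trivially hyperbolic and may be omitted from the induction. Otherwise, M-convexity is preserved under each $\partial_i$ (a standard fact), hence $J:=\supp(\partial^{\mb}f)$ is M-convex; since the vertex is non-sink, $\deg(\partial^{\mb}f)=\ir-|\mb|\ge 3$, and the connectivity of the exchange graph of a nonempty M-convex set $J\subseteq\Delta_n^d$ with $d\ge 2$ implies that the cliques $\supp(\nb)$, $\nb\in J$, overlap along paths, so the graph on $\bigcup_{\nb\in J}\supp(\nb)$ in which $\{i,k\}$ is an edge whenever some $\nb\in J$ contains both $i$ and $k$ is connected. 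This is exactly the statement that $\bM_{\mb}$ restricted to its support is irreducible, which is \eqref{eq:Irr}. For \eqref{eq:hPos}: $\hb_{\mb}=w$ is strictly positive, and by Euler $\bM_{\mb}\hb_{\mb}$ is a positive multiple of the vector $\big((\partial_i\partial^{\mb}f)(w)\big)_{i\in[n]}$, each coordinate of which is positive on $\supp(\bM_{\mb})$ since $i\in\supp(\bM_{\mb})$ forces $\partial_i\partial_j\partial^{\mb}f\not\equiv 0$ for some $j$, hence $\partial_i\partial^{\mb}f\not\equiv 0$ and so $\partial_i\partial^{\mb}f(w)>0$.

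With these facts in hand, I would finish by a downward induction on $|\mb|$: the sinks are hyperbolic by the Lorentzian hypothesis, and if all vertices at level $|\mb|+1$ are hyperbolic then so is $\mb$ --- either $\bM_{\mb}=0$, or $\mb$ is a non-sink regular vertex all of whose out-neighbors are hyperbolic, so Theorem~\ref{t:Hyp} applies. In particular $\mathbf{0}$ is hyperbolic, so $(\ir-2)!\,\Hr_f(w)$ satisfies \eqref{eq:Hyp} and therefore so does $\Hr_f(w)$, for every $w\in\Rb^n_{>0}$.
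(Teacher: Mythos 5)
Your proposal is correct and follows essentially the same route as the paper: the same atlas on iterated derivatives of $f$ with identity maps $\bT^{\<i\>}$, the same verification of \eqref{eq:Inh}, \eqref{eq:Iden}, \eqref{eq:TPInv}, \eqref{eq:Decsupp} (hence \eqref{eq:PullEq} via Theorem~\ref{t:Pull}), regularity from M-convexity, and the Lorentzian hypothesis applied at the degree-two sinks. The only cosmetic differences are that you index vertices by multi-indices rather than words and absorb the Euler factor into a factorial normalization of $\bM_{\mb}$ with $\hb_{\mb}=w$, whereas the paper keeps $\bM_{\vf}=\Hr_{\partial^{\alpha}f}(w)$ and sets $\hb_{\vf}=w/m$; these are equivalent.
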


\smallskip

Let $f \in H_n^\ir$ be a Lorentzian polynomial with $\ir \geq 3$,
and let $(w_1,\ldots, w_n) \in \Rb^n_{>0}$.
We define a combinatorial atlas \ts $\AA:=\AA(f, w_1,\ldots, w_n)$ \ts  as follows.
Let \ts $\ar:=n$ \ts be the dimension of the atlas, and let
\ts $X=[n]$.
Let  \ts $\Qf:=(\Vf, \Ef)$ \ts be the acyclic digraph where the set of vertices \.  $\Vf \ts := \Vfm\cup \Vf^1 \cup \ldots \cup \Vf^{\ir-2}$ \.
is given by
\begin{align*}
	\Vf^m \, & := \ \big\{ \ts \alpha \in X^* \ :   \ |\alpha| \ts = \ts \ir-2 -m \ts \big\},
\quad \text{for} \ \   0 \leq m \leq \ir-2\ts.
\end{align*}
Each  vertex \ts $\vf=\alpha\in \Vf^m$, \ts $m\geq 1$,  has
exactly  $n$ outgoing edges we label \ts $\bigl(\vf,\vf^{\<x\>}\bigr)\in \Ef$, where \ts $\vf^{\<x\>} = \alpha x$ \ts for every
\ts $x\in X$.

For each vertex \ts $\vf=\alpha$, where \ts $\al =x_1\ts\cdots\ts x_{\ir-2-m} \in \Vf^{m}$, \ts $m\geq 0$,  define the associated matrix \ts
$\bM_v$ \ts as follows:
\[ \bM_v \ := \ \Hr_f( \partial^{\alpha}w),  \qquad \text{ where } \qquad \partial^{\alpha} \ := \ \partial_{x_1} \ldots \partial_{x_{\ir-2-m}}.  \]
Define the associated vector \ts $\hb_v$ \ts for  $m\geq 1$, as follows:
\[  \hb_v \ := \  \, \Bigl(\frac{w_1}{m}\.,\.\ldots \., \. \frac{w_n}{m} \Bigr).  \]
Finally, define
the linear transformation \. $\bT^{\<x\>}: \Rb^{n} \to \Rb^{n}$ \.
associated to the edge \ts $\bigl(\vf,\vf^{\<x\>}\bigr)$, to be the identity map.

\medskip

\subsection{Proof of Theorem~\ref{thm:Lorentzian} }

We first verify that $\AA$ satisfies all conditions in Theorem~\ref{t:Hyp}.

\smallskip

\smallskip

\begin{lemma}\label{l:Lor-Irr}
	For every vertex \. $\vf\ts =\ts \alpha \in \Vf$, we have:
	\begin{enumerate}
		\item \label{item:Lor-Irr 1} the support of the associated matrix \ts $\bMr_{\vf}$ \ts  is given by
$$
\supp(\bMr_{\vf} \, ) \ = \ \bigl\{\ts i \in [n] \, : \,  \partial_i \partial^{\alpha} f  \neq 0 \ts\bigr\},
$$
		\item \label{item:Lor-Irr 2} vertex \. $\vf$ \. satisfies \eqref{eq:Irr}, and
		\item \label{item:Lor-Irr 3} vertex \. $\vf\in \Vf^m$ \. satisfies \eqref{eq:hPos}, for \ts $m\geq 1$.
	\end{enumerate}
\end{lemma}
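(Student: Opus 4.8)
The plan is to treat the three parts in order, with parts (i) and (iii) being short computations and part (ii) carrying the content. Throughout, for a vertex $\vf=\alpha\in\Vf^m$ set $g:=\partial^{\alpha}f$, a homogeneous polynomial of degree $\ir-|\alpha|=m+2\ge 2$ with nonnegative coefficients, so that $\bM_{\vf}$ is the symmetric matrix whose $(i,j)$-entry is $(\partial_i\partial_j g)(w)$ at the strictly positive point $w=(w_1,\ldots,w_n)$. For part (i): since $g$ has nonnegative coefficients, so does $\partial_i\partial_j g$, hence $(\partial_i\partial_j g)(w)\ne 0$ iff $\partial_i\partial_j g\not\equiv 0$; therefore $i\in\supp(\bM_{\vf})$ iff $\partial_i\partial_j g\not\equiv 0$ for some $j$, which plainly forces $\partial_i g=\partial_i\partial^{\alpha}f\not\equiv 0$, and conversely if $\partial_i g\not\equiv 0$ then, being homogeneous of degree $m+1\ge 1$, it involves some variable $w_j$, so $\partial_j\partial_i g\not\equiv 0$ and $i\in\supp(\bM_{\vf})$. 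For part (iii) with $m\ge 1$: the vector $\hb_{\vf}=(w_1/m,\ldots,w_n/m)$ is strictly positive since $w\in\Rb^n_{>0}$; and Euler's identity for the degree-$(m+1)$ polynomial $\partial_i g$ gives $(\bM_{\vf}\hb_{\vf})_i=\tfrac1m\sum_{j}w_j(\partial_i\partial_j g)(w)=\tfrac{m+1}{m}(\partial_i g)(w)$, which is positive precisely when $\partial_i g\not\equiv 0$, i.e.\ by part (i) precisely when $i\in\supp(\bM_{\vf})$. Hence \eqref{eq:hPos}.

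For part (ii), if $g\equiv 0$ then $\bM_{\vf}=0$ and \eqref{eq:Irr} holds vacuously, so assume $g\not\equiv 0$. A nonzero derivative of a Lorentzian polynomial is again Lorentzian \cite{BH}, so $J:=\supp(g)\subseteq\Delta_n^{m+2}$ is M-convex, and by part (i) we have $\supp(\bM_{\vf})=S:=\bigcup_{\mb\in J}\{i:m_i\ge 1\}$. Form the graph $G$ on vertex set $S$ with an edge $\{i,j\}$ whenever $i\ne j$ and $\aM_{ij}\ne 0$; since $\bM_{\vf}$ is symmetric with no zero rows on $S$, property \eqref{eq:Irr} says exactly that $G$ is connected. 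I would prove this from two observations. First, for each $\mb\in J$ the set $\{i:m_i\ge 1\}$ spans a clique of $G$, because for distinct $i,j$ with $m_i,m_j\ge 1$ the monomial $w^{\mb}$ contributes a positive coefficient to $\partial_i\partial_j g$ while distinct monomials of $g$ produce distinct monomials under $\partial_i\partial_j$, so no cancellation occurs and $\aM_{ij}\ne 0$. Second, if $\mb'=\mb-\eb_a+\eb_b\in J$ arises from $\mb\in J$ by an exchange move ($a\ne b$, $m_a\ge 1$), then $\{i:m_i\ge 1\}$ and $\{i:m'_i\ge 1\}$ have a common element; a direct inspection shows the intersection is empty only if $|\mb|\le 1$, which is impossible as $|\mb|=m+2\ge 2$. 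Now given $i,j\in S$, choose $\mb,\nb\in J$ with $m_i\ge 1$ and $n_j\ge 1$; since $J$ is M-convex, $\mb$ and $\nb$ are joined by a chain of exchange moves inside $J$ (each step strictly decreases the $\ell^1$-distance to $\nb$, giving termination), and by the two observations the supports of the members of the chain span cliques of $G$ overlapping consecutively, so their union induces a connected subgraph of $G$ containing $i$ and $j$. Thus $G$ is connected, establishing \eqref{eq:Irr}.

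The only step that is not mechanical is this last one in part (ii): obtaining connectedness of the Hessian-support graph from M-convexity of $\supp(\partial^{\alpha}f)$ together with the clique structure contributed by individual monomials. Parts (i) and (iii) amount to bookkeeping with nonnegative coefficients and Euler's identity, and the two reductions in part (ii) --- the vanishing case $g\equiv 0$, and the invariance of \eqref{eq:Irr} under restriction to the support --- are immediate.
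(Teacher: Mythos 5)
Your proof is correct and takes essentially the same approach as the paper: parts (i) and (iii) are the same bookkeeping (the paper's (iii) is even terser, deducing positivity of $\bM\hb$ directly from the strict positivity of $\hb$ and nonnegativity of $\bM$ rather than via Euler's identity), and part (ii) rests on the same two facts, namely that the support of each monomial is a clique of the Hessian-support graph and that M-convex exchange moves keep supports overlapping. The only difference is that the paper finishes part (ii) with a \emph{single} exchange step --- the new index $\ell$ satisfies $n_\ell>0$, hence lies in $\supp(\nb)$, so one more application of the clique observation to $\nb$ closes the argument --- whereas you run a full chain of exchanges from $\mb$ to $\nb$; both are valid.
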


\smallskip

\begin{proof}
	For
	part \eqref{item:Lor-Irr 1},
	note that $i \in [n]$ is not contained in \ts $\supp(\bM_v)$ \ts if and only if
	\. $ \partial_i  \partial_j \partial^{\alpha } f =0$ \. for every \. $j \in [n]$ by definition.
	Since $f$ is a homogeneous polynomial with nonnegative coefficients,
	the latter is equivalent to $ \partial_i  \partial^{\alpha} f =0$,
	and the claim follows.

For part \eqref{item:Lor-Irr 2}, denote by \. ``$\sim$'' \. the equivalence relation on the support of \ts $\bM=\bM_v$,
where two elements are equal if they are contained in the same irreducible component of $\bM$.
	Let us show that \ts $i \sim j$, for all \. $i,j \in  \supp(\bM)$.
	By part~\eqref{item:Lor-Irr 1},  there exist \. $\mb, \nb \in \nn^n$ \. in the support of $\partial^\al f$,
such that \ts $\am_i>0$ \ts and \ts $\an_j >0$.

\medskip

\nin
\textbf{Claim:} \. {\em
Every element $k$ in the support of \ts {\em $\mb$} \ts satisfies \. $k \sim i$.}

\begin{proof}[Proof of Claim]
The claim is clear for \ts $k=i$, so suppose that \ts $k \neq i$.
Then \. $\partial_{i}\partial_k w^{\mb} >0$. Since $\mb$ is contained in the support of \ts $\partial^\al f$,
		this implies that $\aM_{ik}>0$, and the claim now follows.
	\end{proof}
	
	If \ts $\mb=\nb$, then
	$i \sim j$  by the claim, so suppose to the contrary that \ts $\mb\ne \nb$.
	Then there exists $k \in [n]$ such that \ts $\am_k > \an_k \geq 0$.
	Since $f$ is M-convex, there exists $\ell \in [n]$, such that \ts $\an_\ell > \am_\ell \geq 0$ \ts
	and \ts $\mb-\eb_k+\eb_\ell$ \ts  is contained in the support of~$f$.
	We now show that \ts $i \sim \ell$.
	Indeed,
	If \ts $k \neq i$, then  \. $i \in \supp(\mb-\eb_k+\eb_\ell)$, which by the claim implies that $i \sim \ell$.
	If \ts $k=i$, then
	there exists \. $h \in \supp(\mb-\eb_k)$ \. since
	the  $\deg(w^{\mb})=\deg(\partial^{\alpha}f)$ is at least~2.
	Then \ts $i \sim h$ \ts and \ts $h \sim \ell$, by applying the claim to \ts $\mb$ \ts
	and \ts $\mb-\eb_k+\eb_\ell$, respectively.
	By transitivity, we then have \ts $i \sim \ell$, as desired.
	
	On the other hand, we have \ts $\ell \sim j$ \ts by the claim applied to~$\nb$.
	By transitivity, it then follows that \ts $i \sim j$,
	as desired.  Since $i$ and $j$ are arbitrarily chosen, this shows that~$\bM$
    is irreducible when restricted to its support, as desired.
	
	Finally, part \eqref{item:Lor-Irr 3} follows directly from the fact that $\hb_v$
is strictly positive by definition, and the fact that $\bM$ is nonnegative.
\end{proof}

\smallskip

\begin{lemma}\label{l:Lorentzian-conditions}
	For every Lorentzian polynomial \ts $f$, the atlas \ts $\AA$ \ts satisfies  \eqref{eq:Inh}, \eqref{eq:TPInv}, \eqref{eq:Iden}, and \eqref{eq:Decsupp}.
\end{lemma}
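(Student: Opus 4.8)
The plan is to verify the four properties essentially independently. Property \eqref{eq:Iden} holds by construction, while \eqref{eq:TPInv} and \eqref{eq:Decsupp} are formal consequences of the commutativity and monotonicity of differentiation; the one property needing an actual computation is \eqref{eq:Inh}, which will follow from Euler's identity for homogeneous polynomials. Throughout I fix a non-sink vertex $\vf = \alpha \in \Vf^m$ with $1 \le m \le \ir - 2$, so that $|\alpha| = \ir - 2 - m$; recall that $\bM_{\vf}$ has $(i,j)$-entry $(\partial_i\partial_j\partial^{\alpha} f)(w_1,\dots,w_n)$, that $\vf^{\<i\>} = \alpha i$ so that $\bM^{\<i\>}$ has $(j,k)$-entry $(\partial_i\partial_j\partial_k\partial^{\alpha} f)(w_1,\dots,w_n)$, and that $\hb_{\vf} = (w_1/m,\dots,w_n/m)$.

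First, \eqref{eq:Iden} is immediate, since each $\bT^{\<x\>}$ was defined to be the identity map. For \eqref{eq:TPInv}, I would observe that each of $\aM^{\<i\>}_{jk}$, $\aM^{\<j\>}_{ki}$ and $\aM^{\<k\>}_{ij}$ equals $(\partial_i\partial_j\partial_k\partial^{\alpha} f)(w_1,\dots,w_n)$ because partial derivatives commute, so the three coincide. For \eqref{eq:Decsupp}, I would invoke Lemma~\ref{l:Lor-Irr}\eqref{item:Lor-Irr 1}, which identifies $\supp(\bM_{\vf})$ with $\{\,j \in [n] : \partial_j\partial^{\alpha} f \neq 0\,\}$ and $\supp(\bM^{\<i\>})$ with $\{\,j \in [n] : \partial_i\partial_j\partial^{\alpha} f \neq 0\,\}$; since $\partial_j\partial^{\alpha} f = 0$ forces $\partial_i\partial_j\partial^{\alpha} f = 0$, the second set is contained in the first, which is exactly the containment asserted by \eqref{eq:Decsupp}.

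It remains to establish \eqref{eq:Inh}. Since $\bT^{\<i\>}$ is the identity, the claim reduces to $(\bM_{\vf}\vb)_i = \langle \vb,\, \bM^{\<i\>}\hb_{\vf}\rangle$ for all $i \in \supp(\bM_{\vf})$ and $\vb \in \Rb^n$, and by linearity in $\vb$ it suffices to prove the scalar identity $(\partial_i\partial_j\partial^{\alpha} f)(w) = \tfrac1m \sum_{k=1}^n w_k\,(\partial_i\partial_j\partial_k\partial^{\alpha} f)(w)$ for each $j$. Here the polynomial $g := \partial_i\partial_j\partial^{\alpha} f$ is homogeneous of degree $\ir - 2 - |\alpha| = m \ge 1$, so Euler's identity $\sum_{k} w_k\,\partial_k g = (\deg g)\, g = m\, g$, evaluated at $(w_1,\dots,w_n)$, gives precisely this equality; this is the only place where the normalization $\hb_{\vf} = (w_1/m,\dots,w_n/m)$ is used, and it is chosen exactly so that the weight $m$ cancels the degree. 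I do not expect a genuine obstacle in this lemma; the only thing to keep careful track of is that $\deg(\partial_i\partial_j\partial^{\alpha} f) = m$ matches the denominator $m$ in $\hb_{\vf}$, and that $m \ge 1$ ensures $g$ is genuinely homogeneous so that Euler applies.
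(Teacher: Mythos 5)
Your proposal is correct and follows essentially the same route as the paper: \eqref{eq:Iden} by construction, \eqref{eq:TPInv} and \eqref{eq:Decsupp} from commutativity of partial derivatives via Lemma~\ref{l:Lor-Irr}\eqref{item:Lor-Irr 1}, and \eqref{eq:Inh} from Euler's identity applied to the degree-$m$ homogeneous polynomial $\partial_i\partial_j\partial^{\alpha}f$, exactly as in equation~\eqref{eqmike 1} of the paper. The degree bookkeeping $\deg(\partial_i\partial_j\partial^{\alpha}f)=\ir-2-|\alpha|=m$ matching the normalization of $\hb_{\vf}$ is precisely the point the paper's computation also turns on.
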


\smallskip

\begin{proof}
	Let $\vf=\alpha\in \Vf^m$, $m\ge 1$, be a non-sink vertex of~$\Qf$.
	Note that \eqref{eq:Iden} follows directly from definition.
	For~\eqref{eq:TPInv}, let \ts $i,j,k\in [n]$ \ts be arbitrary elements.
	Then:
\[
\aM^{(i)}_{jk} \ = \  \partial_i \partial_j \partial_k \partial^\al f, \quad \aM^{(j)}_{ki}
\ = \  \partial_j \partial_k \partial_i \partial^\al f, \quad \aM^{(k)}_{ij}
\ = \  \partial_k \partial_i \partial_j \partial^\al f,
\]
	which are all equal since partial derivatives commute with each other.
	This proves \eqref{eq:TPInv}.
	
	For~\eqref{eq:Decsupp}, let \ts $i \in [n]$.
	By Lemma~\ref{l:Lor-Irr}\eqref{item:Lor-Irr 1},
	the condition states that
 \. $ \partial_j \partial^{\alpha}  f =0$ \. implies \. $\partial_j \partial_i \partial^{\alpha}  f =0$,
 for every $j \in [n]$.  This is clear by commutativity again.

	It remains to verify~\eqref{eq:Inh}.
	First note that for every homogeneous polynomial \ts $g$ \ts of degree \ts $m \geq 1$, we have:
	\begin{equation}\label{eqmike 1}
		g \ = \  \frac{1}{m}\. \sum_{i=1}^n \. w_i \ts \partial_i g.
	\end{equation}
	Let \ts $i \in [n]$ \ts and \ts $\vb \in \Rb^n$.
	We have:
	\begin{align*}
		&  \big \langle   \bT^{\<i\>}\vb, \, \bM^{\<i\>} \,
		\bT^{\<i\>}\hb \big \rangle    \  =_{\eqref{eq:Iden}}
		\big \langle   \vb, \, \bM^{\<i\>} \,
		\hb \big \rangle \ = \
		\sum_{j=1}^n \. \sum_{k=1}^n \. \av_j \. \frac{w_k}{m} \, \partial_j \partial_k \partial_i \partial^{\alpha} f
		\\ & \qquad = \  \sum_{j=1}^n \. \av_j \. \frac{1}{m}\. \sum_{k=1}^n  \. w_k \, \partial_k \,  \Bigl( \partial_i \partial_j \partial^{\alpha} f \Bigr)
		\ =_{\eqref{eqmike 1}} \  \sum_{j=1}^k \. \av_j \, \partial_j \partial_i \partial^\al f \ = \  \big( \bM \vb\big)_{i}\..
	\end{align*}
	This proves \eqref{eq:Inh}.
\end{proof}

\smallskip

\smallskip

\begin{proof}[Proof of Theorem~\ref{thm:Lorentzian}]
	Note that the atlas $\AA$ satisfies every condition of Theorem~\ref{t:Hyp}
	by Lemma~\ref{l:Lorentzian-conditions}.
	Note also that every non-sink vertex of $\AA$ is regular by Lemma~\ref{l:Lor-Irr}.
	Applying Theorem~\ref{t:Hyp} iteratively,
	it suffices to show that the Hessian of \ts $\partial^{\alpha} f$ \ts satisfies \eqref{eq:Hyp}
	for every \ts $|\alpha|=\ir -2$.
	However, this is the assumption of Lorentzian polynomial, and the theorem now follows.
\end{proof}

\bigskip

\section{Alexandrov--Fenchel inequality}
\label{s:AF ineq}

In this section we give an elementary self-contained proof of the classical Alexandrov--Fenchel
inequality.  As we mentioned in the introduction, despite the difference in presentation,
the heart of the argument follows the proof in \cite[Thm~5.2]{SvH19} combined with a few
geometric arguments based on the presentation in~\cite{Sch}.

\medskip

\subsection{Mixed volumes}\label{ss:mixed volume}
Fix \ts $n \geq 1$.
For two sets \. $A, B \subset \Rb^n$ \. and constants \ts $a,b>0$, denote by
\[ aA+bB \ := \ \bigl\{ \ts a\xb+ b\yb  \, : \, \xb \in A, \yb \in B  \ts \bigr\}
\]
the \defnb{Minkowski sum} of these sets.
For a  convex body \ts $K \subset \Rb^n$, denote by \ts $\Vol_n(K)$ \ts the
volume of~$K$.
One of the basic result in convex geometry is \defng{Minkowski's theorem}
that the volume of convex bodies in~$\Rb^n$ behaves as a homogeneous polynomial
of degree~$n$ with nonnegative coefficients:

\medskip

\begin{thm}[{\rm Minkowski, see e.g.~\cite[$\S$19.1]{BZ-book}}{}]
For all convex bodies \. $\iK_1, \ldots, \iK_r \subset \Rb^n$ \. and \. $\lambda_1,\ldots, \lambda_r > 0$,
we have:
\begin{equation}\label{eq:mixed volume definition}
	\Vol_n(\lambda_1 \iK_1+ \ldots + \lambda_r \iK_r) \ =  \ \sum_{1 \ts \le \ts i_1\ts ,\ts \ldots \ts , \ts i_n\ts \le \ts r} \. \iV\bigl(\iK_{i_1},\ldots, \iK_{i_n}\bigr) \, \lambda_{i_1} \ts\cdots\ts \lambda_{i_n},
\end{equation}
where the functions \ts $\iV(\cdot)$ \ts are nonnegative and symmetric.
\end{thm}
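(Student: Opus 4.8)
The plan is to prove the identity first for polytopes by induction on the dimension $n$, and then extend to arbitrary convex bodies by polytopal approximation together with continuity of $\Vol_n$. For $n=1$ a convex body is a segment $\iK_i=[\ua_i,\ub_i]$, and $\Vol_1(\sum_i\lambda_i\iK_i)=\sum_i\lambda_i(\ub_i-\ua_i)$, which is linear in $\lambda$ with nonnegative coefficients; one sets $\iV(\iK_i):=\ub_i-\ua_i$. For the inductive step, recall the support function $h_K(u):=\sup_{x\in K}\langle u,x\rangle$, which is additive for Minkowski sums and positively homogeneous, so $h_{\lambda_1\iK_1+\cdots+\lambda_r\iK_r}=\lambda_1 h_{\iK_1}+\cdots+\lambda_r h_{\iK_r}$ is linear in $\lambda$. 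After translating each $\iK_i$ so that $\0$ lies in its interior (which does not change $\Vol_n$ of any Minkowski combination), decomposing a polytope $P$ into the pyramids with apex $\0$ over its facets gives
\[
\Vol_n(P)\ =\ \frac1n\sum_{u}\ h_P(u)\,\Vol_{n-1}\bigl(F(P,u)\bigr),
\]
where the sum is over the outer unit facet normals $u$ of $P$ and $F(P,u):=\{x\in P:\langle u,x\rangle=h_P(u)\}$, and every term is nonnegative because $h_P(u)\ge 0$ when $\0\in P$.

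Now let $Q:=\lambda_1\iK_1+\cdots+\lambda_r\iK_r$ with all $\iK_i$ polytopes and all $\lambda_i>0$. The normal fan of $Q$ is the common refinement of the normal fans of the $\iK_i$, hence is \emph{the same fan} for all $\lambda_i>0$; in particular the facet normals of $Q$ lie in a fixed finite set $\mathcal N$ independent of $\lambda$. Moreover the support face in a direction $u$ of a Minkowski sum is the Minkowski sum of the support faces, so $F(Q,u)=\lambda_1F(\iK_1,u)+\cdots+\lambda_rF(\iK_r,u)$ for each $u\in\mathcal N$. Applying the inductive hypothesis inside the hyperplane $u^{\perp}\cong\Rb^{n-1}$ to the bodies $F(\iK_i,u)$ (lower-dimensional summands simply make some of the resulting mixed volumes vanish), each $\Vol_{n-1}(F(Q,u))$ is a homogeneous polynomial of degree $n-1$ in $\lambda$ with nonnegative, symmetric coefficients. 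Multiplying by $h_Q(u)=\sum_i\lambda_i h_{\iK_i}(u)\ge 0$ and summing over the finitely many $u\in\mathcal N$, with the factor $\tfrac1n$, shows that $\Vol_n(Q)$ is a homogeneous degree-$n$ polynomial in $\lambda$ with nonnegative coefficients. One then \emph{defines} $\iV(\iK_{i_1},\dots,\iK_{i_n})$ to be the common value attached to all orderings of the multiset $\{\iK_{i_1},\dots,\iK_{i_n}\}$, i.e.\ the coefficient of $\lambda_{i_1}\cdots\lambda_{i_n}$ symmetrized over the $n!$ orderings; with this convention the expansion takes exactly the form of \eqref{eq:mixed volume definition}, with $\iV$ symmetric by construction.

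To pass to general convex bodies, fix $\iK_1,\dots,\iK_r$ and choose polytopes $P_i^{(k)}\to\iK_i$ in the Hausdorff metric. Since Minkowski addition and scaling are Hausdorff-continuous and $\Vol_n$ is continuous on convex bodies, $\Vol_n\bigl(\sum_i\lambda_iP_i^{(k)}\bigr)\to\Vol_n\bigl(\sum_i\lambda_i\iK_i\bigr)$ as $k\to\infty$, uniformly for $\lambda$ in any box $[\varepsilon,M]^r$ with $\varepsilon>0$. A sequence of polynomials of degree $\le n$ converging uniformly on a nonempty open set converges to a polynomial of degree $\le n$ whose coefficients are the limits of the coefficients; hence homogeneity, nonnegativity, and symmetry all pass to the limit, and $\iV(\iK_{i_1},\dots,\iK_{i_n}):=\lim_k\iV\bigl(P_{i_1}^{(k)},\dots,P_{i_n}^{(k)}\bigr)$ is well defined with the stated properties.

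The main obstacle is the polytope step, specifically the two geometric facts that make the induction on dimension close uniformly in $\lambda$: that $F(\lambda_1P_1+\cdots+\lambda_rP_r,u)=\lambda_1F(P_1,u)+\cdots+\lambda_rF(P_r,u)$, and that the facet normals of $\lambda_1P_1+\cdots+\lambda_rP_r$ range over a fixed finite set independent of the choice of positive $\lambda_i$ (equivalently, that the normal fan is the common refinement of the normal fans of the summands). Once these are in hand, the rest is bookkeeping with the pyramid decomposition and a routine approximation argument.
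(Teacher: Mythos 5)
Your argument is correct, but there is no proof in the paper to compare it with: immediately after stating the theorem the authors explicitly decline to prove it, deferring to \cite[$\S$8.3]{Ale}, \cite[$\S$19.1]{BZ-book}, \cite[$\S$5.1]{Sch} and \cite[$\S$3.3]{HW}. What you have written is essentially the standard textbook argument from those sources: induction on dimension via the pyramid decomposition \. $\Vol_n(P)=\frac1n\sum_u h_P(u)\Vol_{n-1}(F(P,u))$, the additivity \. $F(A+B,u)=F(A,u)+F(B,u)$ \. of support faces, the fact that the facet normals of \. $\lambda_1P_1+\cdots+\lambda_rP_r$ \. range over a finite set independent of the positive $\lambda_i$, and a Hausdorff-approximation limit at the end. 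The closest the paper comes to this material is Lemmas~\ref{lem:volume decomposition} and~\ref{lem:mixed volume decomposition}, which prove exactly your two key identities, but only for simple strongly isomorphic polytopes in a fixed $a$-type $\Ac$ --- a setting where the finite set of normals is fixed by hypothesis, so the normal-fan refinement fact you invoke is not needed there. The one place your sketch is thinner than it should be is the inductive hypothesis: the faces $F(K_i,u)$ are in general lower-dimensional polytopes in $u^{\perp}$ (possibly single points), so the induction must be formulated for arbitrary, not necessarily full-dimensional, polytopes; your normalization ``translate so that $\0$ lies in the interior'' is then unavailable for the summands, and one should instead require only $\0\in K_i$ (which still gives $h_{K_i}(u)\ge 0$) and note that the pyramid identity itself holds regardless of where the origin sits, by the balancing relation \. $\sum_u \Vol_{n-1}(F(P,u))\ts u=\0$ \. used in the paper's proof of Lemma~\ref{lem:volume decomposition}. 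This is routine bookkeeping and does not affect correctness.
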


\medskip

The coefficients \. $\iV(\iK_{i_1},\ldots, \iK_{i_n})$ \. are called \defnb{mixed volumes}
of \. $\iK_{i_1}, \ldots, \iK_{i_n}$.  They are \emph{invariant under translations}:
\[ \iV(\iK_{1}+\ba_1,\ldots, \iK_n + \ba_n )   \ = \ \iV(\iK_{1},\ldots, \iK_n) \quad \, \text{for every \ \. $\ba_1,\ldots, \ba_n \in \Rb^n$.}  \]
From this point on, every convex body in this section is assumed to be equivalent under translations.
Note also that \. $\iV(\iK,\ldots, \iK) = \Vol_n(\iK)$ \. for every convex body \. $K \subset \Rb^n$, and that \.
$\iV(\cdot)$ \. is \emph{multilinear}:
$$\iV(\lambda \iK+ \lambda' \iK',\iK_2,\ldots, \iK_n) \ = \  \lambda \. \iV(\iK,\iK_2,\ldots, \iK_n) \. + \. \lambda' \. \iV(\iK',\iK_2,\ldots, \iK_n) \quad \text{for every \ $\lambda, \lambda' > 0$.}
$$
Finally, mixed volumes are \emph{monotone}:
$$
\iV(\iK_1,K_2,\ldots, \iK_n) \, \ge \, \iV(\iK_1',\iK'_2,\ldots, \iK'_n) \quad \text{for all} \ \ \iK_i\supseteq \iK_i'\., \ 1\le i \le n.
$$

\smallskip

We will not prove Minkowski's theorem which is elementary and well presented in a number of
textbooks, such as \cite[$\S$8.3]{Ale}, \cite[$\S$19.1]{BZ-book}, \cite[$\S$5.1]{Sch}, and most recently
in \cite[$\S$3.3]{HW}. Instead, we will be concerned with the following classical inequality:

\medskip

\begin{thm}[{\rm \defng{Alexandrov--Fenchel inequality}, see e.g.~\cite[$\S$20]{BZ}\ts}{}]\label{thm:AF}
	For  convex bodies \ts $\iA$, $\iB$, $\iK_1$, \ts $\ldots$ \ts , $\iK_{n-2} \subset \Rb^n$, we have:
\begin{equation}\label{eq:AF} \tag{AF}
\iV(\iA,\iB, \iK_1, \ldots, \iK_{n-2})^2  \ \geq \ \iV(\iA,\iA, \iK_1, \ldots, \iK_{n-2}) \.\cdot\. \iV(\iB,\iB, \iK_1, \ldots, \iK_{n-2}).
\end{equation}
\end{thm}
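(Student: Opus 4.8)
The plan is to realize the bilinear form $(\iA,\iB)\mapsto\iV(\iA,\iB,\iK_1,\ldots,\iK_{n-2})$ as the quadratic form of the associated matrix at a single ``top'' vertex of a combinatorial atlas, and then to deduce~\eqref{eq:AF} from the hyperbolic property supplied by Theorem~\ref{t:Hyp}. The only geometric inputs are Minkowski's theorem and the classical decomposition of a mixed volume as a weighted sum of mixed volumes of facets (see e.g.~\cite{Sch}); everything else is the local--global principle. By multilinearity of mixed volumes and their continuity in the Hausdorff metric, it suffices to prove~\eqref{eq:AF} when $\iA,\iB,\iK_1,\ldots,\iK_{n-2}$ are full-dimensional polytopes, and after an arbitrarily small perturbation we may assume that their normal fans all refine a fixed complete fan $\Sigma$ in $\Rb^n$ with rays $u_1,\ldots,u_N$; each such polytope $Q$ is then recorded by its vector of support numbers $\bigl(h_Q(u_1),\ldots,h_Q(u_N)\bigr)\in\Rb^N$, and this assignment is additive under Minkowski sums. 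For $n=2$ the inequality~\eqref{eq:AF} is the classical Minkowski inequality for mixed areas of planar convex bodies and is handled directly, so we may assume $n\ge3$.

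We build an atlas $\AA$ of dimension $\ar:=N$. Its underlying poset has levels $0,1,\ldots,n-2$, with an interpolation parameter $t\in[0,1]$ attached to each level as in Section~\ref{s:matroid}, which will be used to secure the regularity conditions. A vertex at level $m$ is a cone $\tau\in\Sigma$ of codimension $m$ together with the images of $\iK_1,\ldots,\iK_m$ under the projection to the $(n-m)$-dimensional quotient $\Rb^n/\mathrm{span}(\tau)$; its associated matrix $\bM_\vf$ is the matrix of second mixed volumes, computed in that quotient, of the remaining bodies, so that its entries are linear combinations of mixed volumes, its diagonal is nonnegative, but $\bM_\vf$ is in general \emph{not} a nonnegative matrix. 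The associated vector $\hb_\vf$ is proportional to the support vector in the quotient of the next body $\iK_{m+1}$ to be peeled off, and the linear map $\bT^{\langle i\rangle}$ attached to the edge labelled $i$ is the natural projection onto the coordinates surviving in the facet hyperplane $u_i^\perp$. The top vertex $\vf_0$ lies at level $0$, where $\langle\vb,\bM_{\vf_0}\wb\rangle=\iV(Q_\vb,Q_\wb,\iK_1,\ldots,\iK_{n-2})$ for support vectors $\vb,\wb$; the sink vertices lie at level $n-2$, where the quotient is a plane and $\bM_\vf$ is a matrix of mixed areas of polygons.

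Next one verifies the atlas axioms. The inheritance property~\eqref{eq:Inh} is precisely the Minkowski-type identity that rewrites the mixed volume $(\bM_\vf\vb)_i$ in $\Rb^{n-m}$ as the corresponding mixed volume transported into the facet hyperplane $u_i^\perp$ and paired with the projected vector $\hb_\vf$. For the pullback property -- here one obtains the stronger equality~\eqref{eq:PullEq} -- the sufficient conditions of Theorem~\ref{t:Pull} do \emph{not} apply, since the maps $\bT^{\langle i\rangle}$ are projections rather than identities; instead one argues directly that for a genuine support vector $\vb=\hb_Q$ the identity is the exact facet decomposition of $\iV(Q,Q,\iK_1,\ldots,\iK_{n-2})$, and, both sides being quadratic forms in $\vb$ while support vectors span $\Rb^\ar$, it extends to all of $\Rb^\ar$. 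The regularity conditions~\eqref{eq:Irr} and~\eqref{eq:hPos} -- irreducibility of $\bM_\vf$ on its support, and positivity of $\hb_\vf$ and $\bM_\vf\hb_\vf$ -- hold for full-dimensional bodies over a connected fan, and at the interpolated vertices $t\in(0,1)$, after one more generic perturbation if necessary; since $\bM_\vf$ need not be nonnegative, throughout this step we use the reformulation of hyperbolicity through~\eqref{eq:NDC} and~\eqref{eq:OPE} from Lemma~\ref{l:Hyp is OPE}. Finally, the sink vertices satisfy~\eqref{eq:Hyp} by a direct computation with the matrix of planar mixed areas, which one checks to have at most one positive eigenvalue -- this is the two-dimensional case, the base of the whole argument.

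With all axioms in place, running Theorem~\ref{t:Hyp} level by level, upward from the sinks, shows that every vertex of $\AA$, in particular $\vf_0$, is hyperbolic. Substituting the support vectors of $\iA$ and $\iB$ into~\eqref{eq:Hyp} for $\bM_{\vf_0}$ then gives~\eqref{eq:AF} for our polytopes (the inequality being trivial when $\iV(\iB,\iB,\iK_1,\ldots,\iK_{n-2})=0$), and reversing the reductions by continuity yields~\eqref{eq:AF} in general. I expect the main obstacle to be the verification of the pullback equality~\eqref{eq:PullEq} and of the regularity hypotheses for this mixed-volume atlas: this is exactly where the argument leaves the purely combinatorial setting behind, where one is forced to allow associated matrices that are only nonnegative on the diagonal and to invoke~\eqref{eq:NDC}, and where the careful bookkeeping between degenerate and generic configurations of the bodies becomes unavoidable.
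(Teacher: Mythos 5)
Your plan is, in substance, the paper's own proof in Section~\ref{s:AF ineq}: reduce to simple strongly isomorphic polytopes, realize $\iV(\cdot,\cdot,\iK_1,\ldots,\iK_{n-2})$ as the quadratic form of a mixed volume matrix, verify \eqref{eq:Inh} and \eqref{eq:PullEq} through the facet decomposition of mixed volumes (with \eqref{eq:PullEq} proved directly on support vectors and extended by bilinearity, exactly as in Lemma~\ref{lem:AF-Pull}), and run Theorem~\ref{t:Hyp} down to the planar base case. Your multi-level atlas is just the paper's depth-one atlas with the induction on the dimension $m$ unrolled, and the $t$-interpolation you import from Section~\ref{s:matroid} is unnecessary here: for full-dimensional simple polytopes containing the origin, \eqref{eq:Irr} and \eqref{eq:hPos} hold outright (Lemmas~\ref{lem:AF-Irr} and~\ref{lem:AF-h-Pos}).

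Two points in your sketch would fail as literally written. First, $\bT^{\<i\>}$ is \emph{not} the coordinate projection onto the indices $j$ with $(i,j)\in\iJ$: the $j$-th support number of the facet $\iF^{\<i\>}$ is $\av_j\csc\theta^{\<ij\>}-\av_i\cot\theta^{\<ij\>}$ (Lemma~\ref{lem:h-vector hereditary}), and this trigonometric correction is precisely what makes \eqref{eq:Inh} and \eqref{eq:PullEq} exact identities; with a bare projection both break, since the facet lives in $\ub^{\<i\>\perp}$ while the normals $\ub^{\<j\>}$ are not contained in that hyperplane. Second, the sink matrices of planar mixed areas do not satisfy \eqref{eq:OPE} by a ``direct computation'': the only available route is the planar Brunn--Minkowski (equivalently, Minkowski) inequality, which the paper uses to verify \eqref{eq:NDC} and then invokes Lemma~\ref{l:Hyp is OPE} (see Lemma~\ref{lem:AF-base}). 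So the base case carries genuine geometric content and cannot be discharged algebraically; you acknowledge the planar Minkowski inequality at the outset, but it must be fed into the hyperbolicity of the sink matrices, not just into the $n=2$ instance of \eqref{eq:AF}.
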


\medskip

The way our proof works is by establishing hyperbolicity of a certain matrix
(Theorem~\ref{t:AF-Hyp}), which is where our combinatorial atlas technology comes in.
Unfortunately, both the matrix and the proof emerge in the middle of a technical calculations some
of which are standard, which go back to Minkowski and Alexandrov, and are widely available in the literature.
In an effort to make the proof self-contained, we made a choice to include them all,
sticking as much as possible to the presentation in Chapters~2 and~5 of \cite{Sch}.

\medskip

\subsection{Mixed volume preliminaries}
In this section we collect basic properties of mixed volumes that will be used in
the proof of Theorem~\ref{thm:AF}.  The reader well versed with  mixed volumes can
skip this subsection.

\smallskip

% For the rest of this section we will restrict to the following class of convex bodies.
Let \ts $\iW \subseteq \Rb^n$ \ts be a linear subspace of dimension \ts $\dim(\iW)=m\geq 1$.
All polytopes in this paper are assumed to be convex.  A convex polytope
\ts $P\ssu W$ \ts is called \defn{$m$-dimensional} if it has nonempty interior.
An $m$-dimensional polytope $\iP \subset \iW$ is \defnb{simple}
if each vertex is contained in exactly $m$ facets.
The following easy lemma proves positivity of mixed volumes of simple polytopes.

\smallskip

\begin{lemma}[{\rm cf.~\cite[Thm~5.1.8]{Sch}}{}]\label{lem:mixed volume positivity}
	Let \ts $\iP_1,\ldots, \iP_m \subset \iW$ \ts be convex $m$-dimensional polytopes.
	Then
	\. $\iV(\iP_1,\ldots, \iP_m)>0$.
\end{lemma}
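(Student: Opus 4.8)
The plan is to reduce the claim to the positivity of the $m$-dimensional volume of a full-dimensional body, using only the properties of mixed volumes recorded after Minkowski's theorem: monotonicity, translation invariance, multilinearity, and the diagonal identity $\iV(K,\dots,K)=\Vol_m(K)$. First I would identify $\iW$ with $\Rb^m$ by a linear isometry, so that Minkowski's theorem applies inside $\iW$ and produces a symmetric, nonnegative mixed-volume function on convex bodies contained in $\iW$ that is translation invariant, multilinear in each argument, monotone, and satisfies $\iV(K,\dots,K)=\Vol_m(K)$. The only bookkeeping point here is to say explicitly that the quantity $\iV(\iP_1,\dots,\iP_m)$ in the statement is this intrinsic $m$-dimensional mixed volume, since all the properties above were phrased in $\Rb^n$ and we now use them in the $m$-dimensional subspace $\iW$.

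Next, since each $\iP_i$ is $m$-dimensional it has nonempty interior in $\iW$, so I may choose a point $c_i$ in that interior. By translation invariance I replace $\iP_i$ by $\iP_i-c_i$, arranging that the origin lies in the interior of each $\iP_i$. Consequently there is a radius $\rho_i>0$ with $\rho_i B\subseteq\iP_i$, where $B\subset\iW$ is the closed unit ball; put $\rho:=\min_{1\le i\le m}\rho_i>0$, so that $\rho B\subseteq\iP_i$ for every $i$.

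Finally, applying monotonicity and then homogeneity in each slot together with the diagonal identity,
\[
\iV(\iP_1,\dots,\iP_m)\ \ge\ \iV(\rho B,\dots,\rho B)\ =\ \rho^{\,m}\,\iV(B,\dots,B)\ =\ \rho^{\,m}\,\Vol_m(B)\ >\ 0,
\]
which is exactly the assertion. I do not anticipate a genuine obstacle in this argument; the only thing requiring care is the identification of the ambient dimension in the first step, and, if one prefers to stay strictly within polytopes, one can replace the ball $B$ by a small cube in $\iW$ without changing anything.
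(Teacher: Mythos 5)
Your proof is correct, and it differs from the paper's in the choice of comparison bodies rather than in overall strategy: both arguments hinge on monotonicity of mixed volumes followed by an explicit computation in a degenerate case. The paper picks line segments \ts $\iS_i \subset \iP_i$ \ts that together span \ts $\iW$, so that \. $\iV(\iP_1,\ldots,\iP_m) \ge \iV(\iS_1,\ldots,\iS_m) = \frac{1}{m!}\ts\Vol_m(\iS_1+\ldots+\iS_m) > 0$, \. the sum being a nondegenerate parallelepiped; this requires no translation of the polytopes and only the (short) computation of the mixed volume of $m$ spanning segments. You instead translate each \ts $\iP_i$ \ts so that all of them contain a common full-dimensional body \ts $\rho B$, and then use the diagonal identity \ts $\iV(K,\ldots,K)=\Vol_m(K)$ \ts together with homogeneity. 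Both routes are sound and of comparable length; yours trades the determinant computation for the translation step plus the diagonal identity, and your closing remark about replacing the ball by a small cube is worth keeping, since it keeps the argument entirely within the polytopal framework that the rest of the section works in. The one point you rightly flag --- that all the listed properties of \ts $\iV$ \ts are being used intrinsically in the $m$-dimensional subspace \ts $\iW$ \ts --- is handled the same way (implicitly) in the paper.
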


\smallskip

\begin{proof}
	Since $\iP_1,\ldots, \iP_{m}$ are $m$-dimensional,
	there exists line segments \. $\iS_i \subset \iP_i$\ts, \ts $1\le i\le m$,
	such that \. $\iS_1,\ldots, \iS_m$ \. span $\iW$.
	By the monotonicity of mixed volumes, we have:
	\[ \iV(\iP_1,\ldots, \iP_m) \ \geq \ \iV(\iS_1,\ldots, \iS_m).  \]
	On the other hand, by direct calculation, we have:
	\[ \iV(\iS_1,\ldots, \iS_m) \ = \ \frac{1}{m!} \, \Vol_{m}(\iS_1+\ldots+\iS_m)  \ > \ 0. \]
	since \. $\iS_1+\ldots+\iS_m$ \. is an $m$-dimensional parallelepiped
with positive volume.
\end{proof}

\smallskip

Denote by \ts $\iF(\iP,\ub)$ \ts the face of the polytope $\iP$ with normal direction \ts $\ub \in \Sb^{n-1}$.
We say that polytopes \. $\iP, \iP' \subset \iW$  \. are \defnb{strongly isomorphic} if
\[ \dim \iF(\iP,\ub) \ = \   \dim \iF(\iP',\ub) \quad \text{ for all } \ub \in \Sb^{n-1} \ts \cap \ts \iW.
\]
This is a very strong condition which implies that polytopes $\iP$ and~$\iP'$ are combinatorially
equivalent (have isomorphic face lattices), with the corresponding faces parallel to each other.
Being strongly isomorphic is an equivalence relation  on polytopes in $\iW$,
and the equivalence classes of this relation are called \defn{$a$-types}.

For the rest of this section, let \ts $\Ac$ \ts be a fixed $a$-type of $\iW$.
Let   \. $\ub^{\<1\>}, \ub^{\<2\>},\ldots$ \. be the  unit vectors in $\iW$ normals
to facets of polytopes in~$\Ac$, so we have:
\[ \dim \iF\big(\iP,\ub^{\<i\>}\big) \ = \  m-1 \quad \text{ for all } \ \iP \in \Ac.  \]
Denote by \ts $\iW^{\<i\>}$ \ts the hyperplane in $\iW$ that contains the origin \ts $\zero\in \iW$, and
is orthogonal to \ts $\ub^{\<i\>}.$

For a  polytope \ts $\iP \in \Ac$,
the \defnb{support vector} \. $\hb_{\iP} \in \Rb^{\ar}$ \. is  given by
\[
\big(\hb_{\iP} \big)_i \ := \ \sup_{\xb \in \iP} \ \<\ub^{\<i\>},\xb \>,
\]
the distance to the origin~$\zero$ of the supporting hyperplane of $\iP$ whose normal direction is $\ub^{\<i\>}$.
Note that the polytope \ts $\iP \in \Ac$ \ts is uniquely determined by the support vector \ts $\hb_{\iP}$.
Note also that \ts $\hb_{\iP}$ \ts is strictly positive if and only if~$\zero$ is contained in the interior of \ts $\iP$.
Finally, note that support vectors convert Minkowski sum into scalar sum, i.e.\
\. $\hb_{a\iA+b\iB} \. = \.  a \hb_{\iA} \ts  + \. b \hb_{\iB}$ \. for all \ts $a,b\ge 0$.

\smallskip

The next lemma shows that every vector in \ts $\Rb^{\ar}$ \ts
can be expressed as a linear combination of support vectors.
\smallskip

\begin{lemma}[{\rm cf.~\cite[Lem.~2.4.13]{Sch}}{}]\label{lem:h-vector span}
	Let  \ts $\iP\in \Ac$ \ts be a simple polytope. Then
	there exists \ts $\ep >0$, such that for every \ts $\vb \in \Rb^{\ar}$ \ts with $|\vb| <\ep$, we have:
	\[ \vb \ = \ \hb_{\iQ} -\hb_{\iP}\ts,  \]
	for some simple polytope \ts $\iQ \in \Ac$ \ts strongly isomorphic to~$\iP$.
\end{lemma}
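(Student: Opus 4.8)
The plan is to realize $\iQ$ as the polytope obtained by translating each facet hyperplane of $\iP$ according to the corresponding entry of $\vb$, and to use simplicity of $\iP$ to see that, for $\vb$ near $\0$, this perturbation alters neither the combinatorial type nor the set of active facets. First I would fix the combinatorial data of $\iP$: since $\iP \in \Ac$ is a simple $m$-dimensional polytope in $\iW$ with facet normals $\ub^{\<1\>},\ldots,\ub^{\<\ar\>}$, we have $\iP = \{\xb \in \iW : \<\ub^{\<i\>},\xb\> \le (\hb_{\iP})_i \ \text{ for all } i \in [\ar]\}$. For each vertex $v$ of $\iP$ let $I_v \subseteq [\ar]$ be the set of indices $i$ with $v \in \iF(\iP,\ub^{\<i\>})$. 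Simplicity gives $|I_v| = m$ and $\{\ub^{\<i\>} : i \in I_v\}$ a basis of $\iW$, so $v$ is the unique solution of $\<\ub^{\<i\>},\xb\> = (\hb_{\iP})_i$ ($i \in I_v$), while $\<\ub^{\<j\>},v\> < (\hb_{\iP})_j$ strictly for every $j \notin I_v$.

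Next I would perturb. Given $\vb$, set $\hb' := \hb_{\iP} + \vb$, and for each vertex $v$ of $\iP$ let $v'(\vb) \in \iW$ be the unique solution of $\<\ub^{\<i\>},\xb\> = (\hb')_i$ ($i \in I_v$); this depends affine-linearly on $\vb$ with $v'(\0) = v$. Since $\iP$ has finitely many vertices and at $\vb = \0$ each of the finitely many inequalities $\<\ub^{\<j\>},v'(\0)\> < (\hb')_j$ ($j \notin I_v$) is strict, continuity provides $\ep > 0$ such that $|\vb| < \ep$ still forces $\<\ub^{\<j\>},v'(\vb)\> < (\hb')_j$ for every vertex $v$ and every $j \notin I_v$; shrinking $\ep$, I also keep some fixed interior point of $\iP$ strictly inside all perturbed halfspaces. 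Put $\iQ := \{\xb \in \iW : \<\ub^{\<i\>},\xb\> \le (\hb')_i \ \text{ for all } i \in [\ar]\}$. As $\iP$ is bounded, the $\ub^{\<i\>}$ positively span $\iW$, so $\iQ$ is bounded for any offsets; with the interior point this makes $\iQ$ a convex $m$-dimensional polytope.

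It remains to identify $\iQ$. Each $v'(\vb)$ lies in $\iQ$ (the $I_v$-constraints tight, the rest strict) and is a vertex of $\iQ$, saturating $m$ constraints with linearly independent normals. The crucial step is that $v \mapsto v'(\vb)$ is a bijection from the vertices of $\iP$ onto the vertices of $\iQ$, inducing an isomorphism of face lattices that carries $\iF(\iP,\ub^{\<i\>})$ to $\iF(\iQ,\ub^{\<i\>})$ for every $i$ — the standard fact that the normal fan (hence the combinatorial type) of a \emph{simple} polytope is locally constant under perturbation of the facet offsets: near each vertex the two polytopes agree because the relevant $m$ normals form a basis; equivalently, the strong-isomorphism class of a simple polytope is open in its space of support vectors, cf.~\cite[\S2.4]{Sch}. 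Granting this, $\dim \iF(\iQ,\ub) = \dim \iF(\iP,\ub)$ for every $\ub \in \Sb^{n-1} \cap \iW$, so $\iQ$ is strongly isomorphic to $\iP$, i.e.\ $\iQ \in \Ac$ and (simplicity being a strong-isomorphism invariant) $\iQ$ is simple; moreover each hyperplane $\{\<\ub^{\<i\>},\xb\> = (\hb')_i\}$ meets $\iQ$ in the facet $\iF(\iQ,\ub^{\<i\>})$, so $(\hb_{\iQ})_i = (\hb')_i = (\hb_{\iP})_i + v_i$ for all $i$, that is, $\vb = \hb_{\iQ} - \hb_{\iP}$, as required. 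The hard part is precisely this crucial step — excluding spurious vertices or faces of $\iQ$ and matching the combinatorics of $\iP$ — which genuinely needs simplicity (a non-simple vertex can split under an arbitrarily small perturbation, changing the $a$-type) and which I would either spell out via local normal-fan bookkeeping or import from \cite{Sch}.
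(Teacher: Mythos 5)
Your proposal is correct and follows essentially the same route as the paper: the paper's proof likewise defines $\iQ$ by translating the supporting hyperplanes of $\iP$ by the entries of $\vb$ and then simply asserts that simplicity and membership in the $a$-type $\Ac$ are preserved under small enough perturbations. You supply strictly more detail than the paper does (the vertex-by-vertex continuity argument and the identification of where simplicity is genuinely needed), so there is nothing to object to.
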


\smallskip

\begin{proof}
	Let
	\[ \iQ \ := \  \bigl\{ \. \xb \in \iW  \, : \, \langle \ub^{\<i\>}, \xb\rangle \leq  \big(\hb_{\iP} + \vb\big)_{i}   \ \text{ for every } i \in [\ar] \. \bigr\}
\]
	be a polytope formed by translating the supporting hyperplanes of~$\iP$.
	Note that the property of being simple and being contained in $\Ac$ is preserved under small enough pertubations.
	The conclusion of the lemma now follows.
\end{proof}

\smallskip

For  every distinct  \ts $i,j \in [\ar]$\ts,
 denote by \. $\theta^{\<ij\>} \in [0,\pi]$  \. the angle between
$\ub^{\<i\>}$ and $\ub^{\<j\>}$, so we have
\. $\cos\theta^{\<ij\>} \. = \. \big\<\ub^{\<i\>}, \ub^{\<j\>}\big\> $.
Let \ts $\iP_1,\ldots, \iP_{r} \in \Ac$ \ts be simple, strongly isomorphic polytopes.
For every \ts $k \in [r]$, we write
\begin{equation}\label{eq: facet definition}
	\iF_k^{\<i\>} \ := \ \iF\big(\iP_k, \ub^{\<i\>}\big) \quad \text{and} \quad \iF_k^{\<ij\>} \ := \  \iF_k^{\<i\>} \cap \iF_k^{\<j\>},
\end{equation}
for the faces of $\iP_k$ that corresponds to the normal direction \ts $\ub^{\<i\>}$ \ts
and for the pair of directions \ts $\bigl\{\ub^{\<i\>}, \ub^{\<j\>}\bigr\}$, respectively.
By definition, we have \. $\theta^{\<ij\>}= \theta^{\<ji\>}$ \. and \. $\iF_k^{\<ij\>} \. = \. \iF_k^{\<ji\>}$.
When \ts $r=1$, i.e.\ there is only one polytope \ts $\iP=\iP_1$, we omit subscript~$1$ from the notation.
%  and write \ts $\iF^{\<i\>}$ \ts and \ts $\iF^{\<ij\>}$ \ts instead.
\smallskip

The next lemma show that the properties of being  simple and strongly isomorphic are inherited by the facets of the polytopes.
%Recall the notations in \eqref{eq: facet definition}.

\smallskip

\begin{lemma}[{\rm cf.~\cite[Lemma~2.4.10]{Sch}}{}]\label{lem:ssi hereditary}
	Let  \. $\iP_1, \iP_2 \in \Ac$ \. be simple strongly isomorphic $m$-dimensional
polytopes.  Suppose \ts $\iF_1$ \ts and \ts $\iF_2$ \ts are facets
of \ts $\iP_1$ \ts and $\iP_2$ \ts  corresponding to  the same normal direction.
	Then \. $\iF_1$ \ts and \ts  $\iF_2$ \.
	are simple, strongly isomorphic $(m-1)$-dimensional polytopes.
\end{lemma}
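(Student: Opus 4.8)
The plan is to prove the three assertions — that $\iF_1$ and $\iF_2$ are $(m-1)$-dimensional, strongly isomorphic, and simple — separately, reducing each to the local structure of a simple polytope at a vertex together with one standard perturbation fact. Since $\iF_1,\iF_2$ are facets of the $m$-dimensional polytopes $\iP_1,\iP_2$, they are $(m-1)$-dimensional and lie in affine hyperplanes orthogonal to $\ub^{\<i\>}$; translating, I will regard both as polytopes in the $(m-1)$-dimensional linear subspace $\iW^{\<i\>}$, so that ``strongly isomorphic'' becomes the statement $\dim \iF(\iF_1,\vb) = \dim \iF(\iF_2,\vb)$ for all $\vb \in \Sb^{n-1}\cap\iW^{\<i\>}$.

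The key lemma I would isolate is a perturbation fact: for any polytope $\iP$ and any $\vb \in \Sb^{n-1}\cap\iW^{\<i\>}$, there is $\ep_0 > 0$ with $\iF\big(\iF(\iP,\ub^{\<i\>}),\vb\big) = \iF(\iP, \ub^{\<i\>} + \ep\vb)$ for all $0 < \ep < \ep_0$. Its one-line proof: on the finitely many vertices of $\iP$ outside $\iF(\iP,\ub^{\<i\>})$ the linear functional $\langle \ub^{\<i\>},\cdot\rangle$ drops by at least some $\delta > 0$ from its maximum, so for $\ep$ below $\delta$ divided by a crude bound on $\|\vb\|$ times the diameter of $\iP$, the maximum of $\langle \ub^{\<i\>}+\ep\vb,\cdot\rangle$ over $\iP$ is attained inside $\iF(\iP,\ub^{\<i\>})$, where it is maximized exactly at the points maximizing $\langle\vb,\cdot\rangle$, i.e.\ on $\iF\big(\iF(\iP,\ub^{\<i\>}),\vb\big)$.

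Strong isomorphism then follows quickly. Fix $\vb \in \Sb^{n-1}\cap\iW^{\<i\>}$; since $\ub^{\<i\>} \perp \vb$, the unit vector in the direction of $\ub^{\<i\>} + \ep\vb$ lies in $\Sb^{n-1}\cap\iW$, and rescaling a direction does not change the corresponding face, so strong isomorphism of $\iP_1$ and $\iP_2$ gives $\dim\iF(\iP_1,\ub^{\<i\>}+\ep\vb) = \dim\iF(\iP_2,\ub^{\<i\>}+\ep\vb)$ for every $\ep > 0$. Choosing $\ep$ below the thresholds $\ep_0$ from the perturbation lemma applied to $\iP_1$ and to $\iP_2$, I get $\dim\iF(\iF_1,\vb) = \dim\iF(\iP_1,\ub^{\<i\>}+\ep\vb) = \dim\iF(\iP_2,\ub^{\<i\>}+\ep\vb) = \dim\iF(\iF_2,\vb)$, as needed.

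For simplicity, let $v$ be a vertex of $\iF_1$, hence a vertex of $\iP_1$; by simplicity of $\iP_1$ it lies in exactly $m$ facets of $\iP_1$, one of which is $\iF_1$. Using the elementary fact that the faces of a simple $m$-dimensional polytope through a fixed vertex correspond to subsets of the $m$ facets through that vertex, with dimension equal to $m$ minus the size of the subset, the faces of $\iF_1$ through $v$ are exactly those whose subset contains the index of $\iF_1$; the $(m-2)$-dimensional ones — i.e.\ the facets of $\iF_1$ through $v$ — number exactly $m-1$. Hence every vertex of the $(m-1)$-dimensional polytope $\iF_1$ lies in exactly $m-1$ of its facets, so $\iF_1$, and symmetrically $\iF_2$, is simple. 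The only step beyond bookkeeping is the perturbation lemma, and even that is a routine finiteness estimate; I anticipate no genuine obstacle, only the need to normalize perturbed normal vectors so as to match the definition of strong isomorphism verbatim.
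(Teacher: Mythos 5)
Your proposal is correct and follows essentially the same route as the paper: the paper's proof also reduces strong isomorphism of the facets to strong isomorphism of the parents by observing that a face of $\iF_1$ with normal $\vb$ equals a face of $\iP_1$ with a suitable common normal $\ub$, and dismisses simplicity as definitional. Your perturbation lemma merely makes explicit the existence of that common direction $\ub^{\<i\>}+\ep\vb$ (with a uniform $\ep$ for both polytopes), which the paper asserts without proof, and your vertex-figure count of facets through a vertex supplies the detail behind the paper's one-line simplicity claim.
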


\smallskip

\begin{proof}
The case $m=1$ is trivial, so we assume that $m \geq 2$.
By definition, the facets of a simple $m$-dimensional polytope are again
simple $(m-1)$-dimensional polytopes.
It thus suffices to show that \ts $\iF_1$ \ts and \ts $\iF_2$ \ts are strongly isomorphic polytopes.
	Let \ts $\iF_1'$ \ts and \ts $\iF_2'$ \ts be faces of \ts $\iF_1$ \ts and \ts $\iF_2$ \ts with the same unit normal direction.
	Then there exists \ts $\ub \in \Sb^{n-1}$,
	such that  \ts $\iF_1'$ \ts and \ts $\iF_2'$ \ts are faces of \ts $\iP_1$ \ts and \ts $\iP_2$ \ts with the normal direction $\ub$.
	Since \ts $\iP_1$ \ts and \ts $\iP_2$ \ts are strongly isomorphic, it then follows that
	\[ \dim \iF_1' \ = \  \dim \ts \iF(\iP_1,\ub) \ = \ \dim\ts\iF(\iP_2,\ub) \ = \  \dim \iF_2'\., \]
	as desired.
\end{proof}

\smallskip

Now let \ts $m\geq 2$, let \. $\iP\in \Ac$ \. be a simple polytope, and
and let  \. $\iJ =\iJ(\Ac)$  \. be  given by
\[ \iJ \ := \   \bigl\{ \ts (i,j)\in [\ar]^2  \ \ \ \text{s.t.} \ \ \dim \iF^{\<ij\>}  \ = \ m-2 \bigr\}.\]
Note that \ts $\iJ$ \ts does not depend on the choice of \. $\iP\in \Ac$ \.
since the facets of polytopes in \ts $\Ac$ \ts are strongly isomorphic by Lemma~\ref{lem:ssi hereditary}.
Also note that \. $\theta^{\<ij\>} \notin \{0,\pi\}$ \. for all \ts $(i,j) \in \iJ$,
so both \ts $\csc \theta^{\<ij\>}$ \ts and \ts $\cot \theta^{\<ij\>}$ \ts are well defined.

\smallskip

The next lemma relates the support vector of a polytope to the support vector of its facets.

\smallskip

\begin{lemma}[{\rm cf.~\cite[Eq.~(5.4)]{Sch}}{}]\label{lem:h-vector hereditary}
	Let $m \geq 2$, let \. $\iP \in \Ac$ \. be an $m$-dimensional  polytope.
%	and let \. $\hb :=\hb_{\iP} :=(\ahr_1,\ldots,\ahr_{\ar})$ \. be the support vector of $\iP$.
	Then, for all $(i,j) \in \iJ$,
	\begin{equation*}
		\big(\hb_{\iF^{\<i\>}}\big)_j \ = \ \big(\hb_{\iP}\big)_j \csc \theta^{\<ij\>} \ - \
		\big(\hb_{\iP}\big)_i \cot \theta^{\<ij\>}.
	\end{equation*}
\end{lemma}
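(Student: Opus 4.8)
The plan is to compute $\big(\hb_{\iF^{\<i\>}}\big)_j$ straight from the definition of the support vector, reducing the identity to a one-line trigonometric manipulation together with a single elementary observation about the faces of $\iP$.

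First I would describe the facet structure of $\iF^{\<i\>}$ inside the hyperplane $\iW^{\<i\>}$. For $(i,j)\in\iJ$ the face $\iF^{\<ij\>}=\iF^{\<i\>}\cap\iF^{\<j\>}$ is $(m-2)$-dimensional, hence a facet of the $(m-1)$-dimensional polytope $\iF^{\<i\>}$; within $\iW^{\<i\>}$ its outer unit normal is the normalized orthogonal projection of $\ub^{\<j\>}$, namely
\[
\ub^{\<ij\>} \ := \ \frac{\ub^{\<j\>}-\cos\theta^{\<ij\>}\,\ub^{\<i\>}}{\sin\theta^{\<ij\>}}\,,
\]
which makes sense since $\theta^{\<ij\>}\notin\{0,\pi\}$ for $(i,j)\in\iJ$, and which satisfies $\big\langle\ub^{\<ij\>},\ub^{\<i\>}\big\rangle=0$ and $\|\ub^{\<ij\>}\|=1$. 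Applying the definition of the support vector to $\iF^{\<i\>}$, viewed in the linear hyperplane $\iW^{\<i\>}$ after translating by $-\big(\hb_{\iP}\big)_i\,\ub^{\<i\>}$ (this translation does not affect the $j$-th coordinate precisely because $\ub^{\<ij\>}\perp\ub^{\<i\>}$), I get
\[
\big(\hb_{\iF^{\<i\>}}\big)_j \ = \ \sup_{\xb\in\iF^{\<i\>}} \big\langle\ub^{\<ij\>},\xb\big\rangle \ = \ \frac{1}{\sin\theta^{\<ij\>}}\left( \sup_{\xb\in\iF^{\<i\>}}\big\langle\ub^{\<j\>},\xb\big\rangle \ - \ \cos\theta^{\<ij\>}\sup_{\xb\in\iF^{\<i\>}}\big\langle\ub^{\<i\>},\xb\big\rangle \right),
\]
where the two suprema split off because $\big\langle\ub^{\<i\>},\xb\big\rangle=\big(\hb_{\iP}\big)_i$ is constant on $\iF^{\<i\>}$; in particular the second supremum is $\big(\hb_{\iP}\big)_i$.

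It remains to observe that $\sup_{\xb\in\iF^{\<i\>}}\big\langle\ub^{\<j\>},\xb\big\rangle=\big(\hb_{\iP}\big)_j$. The inequality $\le$ is clear since $\iF^{\<i\>}\subseteq\iP$, while $\ge$ holds because $(i,j)\in\iJ$ forces $\iF^{\<ij\>}\ne\varnothing$, so the face $\iF^{\<j\>}$ of $\iP$ — on which $\big\langle\ub^{\<j\>},\cdot\big\rangle$ attains its maximum $\big(\hb_{\iP}\big)_j$ over $\iP$ — meets $\iF^{\<i\>}$. Substituting $\csc\theta^{\<ij\>}=1/\sin\theta^{\<ij\>}$ and $\cot\theta^{\<ij\>}=\cos\theta^{\<ij\>}/\sin\theta^{\<ij\>}$ then yields the stated formula. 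The computation itself is routine; the one delicate point — which I expect to be the main obstacle to a clean write-up — is the bookkeeping of definitions, i.e.\ correctly pinning down the normal directions $\ub^{\<ij\>}$ of the facets of $\iF^{\<i\>}$ and the linear space and origin with respect to which $\hb_{\iF^{\<i\>}}$ is computed.
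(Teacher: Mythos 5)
Your proof is correct and follows essentially the same route as the paper: you introduce the same unit normal $\ub^{\<ij\>}$ (the paper writes the equivalent identity $\ub^{\<j\>}=\ub^{\<i\>}\cos\theta^{\<ij\>}+\ub^{\<ij\>}\sin\theta^{\<ij\>}$), split the supremum using the constancy of $\<\ub^{\<i\>},\cdot\>$ on $\iF^{\<i\>}$, and justify $\sup_{\xb\in\iF^{\<i\>}}\<\ub^{\<j\>},\xb\>=\big(\hb_{\iP}\big)_j$ via $\iF^{\<ij\>}\neq\varnothing$ exactly as the paper does. Your remark about the translation into $\iW^{\<i\>}$ not affecting the $j$-th coordinate is a welcome extra detail that the paper leaves implicit.
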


\smallskip

\begin{proof}
	Let \. $\ub^{\<ij\>} \perp \ub^{\<i\>}$ \. be the unit normal vector of the $(m-1)$-polytope \ts $\iF^{\<i\>}$ \ts at its $(m-2)$-face \. $\iF^{\<ij\>}$.
	Note that
	\begin{equation*}
		\ub^{\<j\>} \ = \  \ub^{\<i\>} \. \cos \theta^{\<ij\>} \ + \ \ub^{\<ij\>} \. \sin \theta^{\<ij\>}.
	\end{equation*}
	This implies that
	\begin{align}\label{eq:Fij}
		\big(\hb_{\iF^{\<i\>}}\big)_{j} \ = \   \sup_{\xb \in \iF^{\<i\>}}  \<  \ub^{\<ij\>}, \xb  \> \ = \  \sup_{\xb \in \iF^{\<i\>}}
		\<  \ub^{\<j\>}, \xb  \> \. \csc \theta^{\<ij\>} \ - \  \<  \ub^{\<i\>}, \xb  \> \. \cot \theta^{\<ij\>}.
	\end{align}
	On the other hand, we have
	\begin{align}\label{eq:Fij-sup}
		\sup_{\xb \in \iF^{\<i\>}}
		\<  \ub^{\<j\>}, \xb  \> \ = \  \big(\hb_{\iP}\big)_j \qquad \text{ and } \qquad
		\<  \ub^{\<i\>}, \xb  \> \ = \  \big(\hb_{\iP}\big)_i \ \ \text{ for all } \xb \in \iF^{\<i\>},
	\end{align}
	where the first equality hold because we have \. $\iF^{\<i\>} \cap \iF^{\<j\>} \neq \varnothing$ \.
	by the assumption that \. $(i,j) \in \iJ$.
	The lemma now follows by combining equations~\eqref{eq:Fij} and~\eqref{eq:Fij-sup}.
\end{proof}

\smallskip

The next lemma relates the volume of a polytope to the volumes of its facets.

\smallskip

\begin{lemma}[{\rm cf.~\cite[Lemma~5.1.1]{Sch}}{}]\label{lem:volume decomposition}
	Let \. $\iP \in \Ac$ \. be an $m$-dimensional  polytope.
	%	let \. $\iF^{\<1\>}, \ldots, \iF^{\<\ar\>}$ \. be its facets,  let \. $\ub^{\<1\>}, \ldots, \ub^{\<\ar\>} \in \Sb^{n-1}$ \. be the corresponding unit normal vectors,
%	and let \. $\hb :=\hb_{\iP} := (\ahr_1,\ldots, \ahr_{\ar})$ \. be the support vector of $\iP$.
	Then
	\begin{equation}\label{eq:Sch2}
		\Vol_m(\iP) \ = \ \frac{1}{m} \. \sum_{i=1}^{\ar} \. \big(\hb_{\iP}\big)_i  \. \Vol_{m-1}\big(\iF^{\<i\>}\big).
	\end{equation}
\end{lemma}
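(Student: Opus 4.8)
The plan is to prove~\eqref{eq:Sch2} by the classical cone (pyramid) decomposition of a polytope about an interior point, reducing the general case to that one via translation invariance.

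\textbf{Step 1: reduction to $\zero\in\mathrm{int}(\iP)$.} First I would record how the two sides transform under a translation $\iP\mapsto \iP+\ba$. The volumes $\Vol_m(\iP)$ and $\Vol_{m-1}(\iF^{\<i\>})$ are translation invariant, while the support vector changes by $(\hb_{\iP+\ba})_i = (\hb_\iP)_i + \langle \ub^{\<i\>},\ba\rangle$. Hence the quantity $\frac1m\sum_{i=1}^{\ar}(\hb_\iP)_i\,\Vol_{m-1}(\iF^{\<i\>}) - \Vol_m(\iP)$ is shifted by $\frac1m\big\langle \ba,\ \sum_{i=1}^{\ar}\Vol_{m-1}(\iF^{\<i\>})\,\ub^{\<i\>}\big\rangle$, which vanishes once we know the Minkowski relation $\sum_{i=1}^{\ar}\Vol_{m-1}(\iF^{\<i\>})\,\ub^{\<i\>} = \zero$. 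I would prove that relation in a couple of lines: for any unit vector $\vb\in\iW$, the orthogonal projection of $\iP$ inside $\iW$ onto $\vb^{\perp}$ is covered exactly once by the facets with $\langle\ub^{\<i\>},\vb\rangle>0$ and exactly once by those with $\langle\ub^{\<i\>},\vb\rangle<0$, and the projection of $\iF^{\<i\>}$ has $(m-1)$-volume $|\langle\ub^{\<i\>},\vb\rangle|\,\Vol_{m-1}(\iF^{\<i\>})$; equating the two coverings gives $\sum_i\langle\ub^{\<i\>},\vb\rangle\,\Vol_{m-1}(\iF^{\<i\>})=0$ for every $\vb$, so the vector is $\zero$. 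Thus it suffices to prove~\eqref{eq:Sch2} after moving the origin into $\mathrm{int}(\iP)$, and then $(\hb_\iP)_i>0$ for every $i$.

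\textbf{Step 2: tiling by pyramids.} Assuming $\zero\in\mathrm{int}(\iP)$, set $C_i := \conv\big(\{\zero\}\cup\iF^{\<i\>}\big)$ for each facet, an $m$-dimensional cone with apex $\zero$. I would check that $\iP = \bigcup_{i=1}^{\ar}C_i$ with the $C_i$ having pairwise disjoint interiors: every $\xb\in\iP\setminus\{\zero\}$ lies on the segment from $\zero$ to the point where the ray from $\zero$ through $\xb$ exits $\iP$, and that exit point lies on $\partial\iP$, hence in some facet $\iF^{\<i\>}$, so $\xb\in C_i$; and any overlap $C_i\cap C_j$ with $i\ne j$ is contained in the union of the cones over faces of $\iP$ of dimension $\le m-2$, a set of $m$-dimensional volume $0$. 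Therefore $\Vol_m(\iP) = \sum_{i=1}^{\ar}\Vol_m(C_i)$.

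\textbf{Step 3: volume of each pyramid, and conclusion.} Since $\iF^{\<i\>}$ is an $(m-1)$-dimensional polytope lying in the hyperplane $\{\xb\in\iW:\langle\ub^{\<i\>},\xb\rangle=(\hb_\iP)_i\}$, whose distance from the apex $\zero$ is exactly $(\hb_\iP)_i>0$, the elementary base-times-height formula for the volume of a cone gives $\Vol_m(C_i) = \tfrac1m\,(\hb_\iP)_i\,\Vol_{m-1}(\iF^{\<i\>})$. Summing over $i$ and using Step~2 yields~\eqref{eq:Sch2}. The main obstacle is really Step~1: without the Minkowski surface-area-vector relation the right-hand side of~\eqref{eq:Sch2} is not visibly translation invariant, so one must supply that relation (the projection argument above does so using only elementary polytope geometry). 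The remaining work — that the pyramids tile $\iP$ up to a null set, plus the cone-volume formula — is routine.
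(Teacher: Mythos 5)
Your proposal is correct and follows essentially the same route as the paper's proof: the Minkowski relation $\sum_i \Vol_{m-1}(\iF^{\<i\>})\,\ub^{\<i\>}=\0$ established by the projection argument, the resulting translation invariance allowing one to place the origin in the interior, and the decomposition into pyramids over the facets. You merely spell out a few routine details (the tiling up to a null set and the base-times-height formula) that the paper leaves implicit.
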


\smallskip

\begin{proof}
	The case $m=1$ is trivial,
%	\footnote{here we adopt the convention that the $0$-dimensional volume of a point in $\Rb^n$ is equal to 1.}
	 so we assume that $m \geq 2$.
	We first show that
	\begin{equation}\label{eq:Sch1}
		\sum_{i=1}^{\ar} \. \Vol_{m-1}\big(\iF^{\<i\>}\big) \. \ub^{\<i\>} \ = \ \0.
	\end{equation}
	Let \ts $\zb  \in \iW \cap \Sb^{n-1}$ \ts be an arbitrary unit vector in \ts $\iW$, and
	let \ts $\iP'$ \ts be the  orthogonal projection of \ts $\iP$ \ts onto \ts $\zb^\perp$.
	Then:
	\[ \Vol_{m-1}(\iP') \ = \  \sum_{\<\zb, \ub^{\<i\>}\> \geq 0} \.
	\<\zb, \ub^{\<i\>}\> \, \Vol_{m-1}\big(\iF^{\<i\>}\big) \ = \   - \sum_{\<\zb, \ub^{\<i\>}\> < 0} \.
	\<\zb, \ub^{\<i\>}\> \, \Vol_{m-1}\big(\iF^{\<i\>}\big). \]
	This implies that
	\[  \sum_{i=1}^{\ar} \.
	\<\zb, \ub^{\<i\>}\> \, \Vol_{m-1}\big(\iF^{\<i\>}\big) \ = \ 0. \]
	Since \ts $\zb$ \ts is arbitrary, the equation~\eqref{eq:Sch1} follows.
	
	From \eqref{eq:Sch1}, we see that the right side of \eqref{eq:Sch2} does not change under translations of $\iP$.
	Since this is also true for the left side of \eqref{eq:Sch2}, we may assume that  the origin~$\0$ is contained in the interior of $\iP$.
	Then \ts $\iP$ \ts is the union of the pyramids \. $\text{conv}\big(\iF^{\<i\>} \cup \{\0\}\big)$, \. $1\le i \le \ar$, which have disjoint interiors.  This implies the equation~\eqref{eq:Sch2}.
\end{proof}

\smallskip

Let $\iP_1,\ldots, \iP_{m} \in \Ac$.
By a slight abuse of notation,
we write \. $\iV\big(\iF_{1}^{\<i\>}, \ldots, \iF_{m-1}^{\<i\>}\big)$ \. to denote the \ts
$(m-1)$-dimensional mixed volume of the facets translated into the \ts
$(m-1)$-dimensional subspace \. $\iW^{\<i\>}$.
Similarly, for every \ts $(i,j) \in \iJ$, we write
\. $\iV\big(\iF_{1}^{\<ij\>}, \ldots, \iF_{m-2}^{\<ij\>}\big)$ \. to denote the \ts $(m-2)$-dimensional
mixed volume of the faces translated into the \ts $(m-2)$-dimensional subspace \. $\iW^{\<i\>} \cap \iW^{\<j\>}$.
%We also adopt the convention that the $0$-dimensional mixed volume is equal to 1.

\smallskip

The next lemma relates the mixed volumes of polytopes to the mixed volumes of their facets.

%By translating these faces, we will assume that
%\ts $\iF^{\<i\>}_k$ \ts and \ts $\iF^{\<ij\>}_k$ \ts are contained in \ts $\iW^{\<i\>}$ \ts and \ts $\iW^{\<i\>} \cap \iW^{\<j\>}$ \ts respectively.

\smallskip

\begin{lemma}[{\rm cf.~\cite[Lemma~5.1.1]{Sch}}{}]\label{lem:mixed volume decomposition}
	Let $m\geq 1$, let \ts $\Ac$ \ts be an $a$-type  of $\iW$, and let
	\ts $\iP_1,\ldots, \iP_{m} \in \Ac$.
	Then
	\begin{equation}\label{eq:Sch3}
		\iV\big(\iP_1,\ldots, \iP_{m}\big) \ = \ \frac{1}{m} \. \sum_{i=1}^{\ar} \. \big(\hb_{\iP_1}\big)_i  \. \iV\big(\iF^{\<i\>}_2,\ldots, \iF^{\<i\>}_{m}\big).
	\end{equation}
\end{lemma}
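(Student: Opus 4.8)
The plan is to deduce~\eqref{eq:Sch3} from its ``diagonal'' case, Lemma~\ref{lem:volume decomposition}, by \emph{polarization}. Write $\Psi(\iP_1,\ldots,\iP_m)$ for the right‑hand side of~\eqref{eq:Sch3}. I would argue that, as functions of $(\iP_1,\ldots,\iP_m)\in\Ac^m$, both $\iV(\iP_1,\ldots,\iP_m)$ and $\Psi(\iP_1,\ldots,\iP_m)$ are linear in each argument under positive Minkowski combinations inside $\Ac$ (recall $\Ac$ is closed under such combinations), and are symmetric under all permutations of the arguments. Since a symmetric multilinear form is determined by its restriction to the diagonal, and the two sides agree there by Lemma~\ref{lem:volume decomposition} — which gives $\Psi(\iP,\ldots,\iP)=\tfrac1m\sum_i(\hb_\iP)_i\Vol_{m-1}(\iF^{\<i\>})=\Vol_m(\iP)=\iV(\iP,\ldots,\iP)$ — they must agree everywhere. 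Concretely, substituting $\iP=\lambda_1\iP_1+\cdots+\lambda_m\iP_m$ with $\lambda_k>0$, the diagonal identity reads $\Psi(\iP,\ldots,\iP)=\Vol_m(\lambda_1\iP_1+\cdots+\lambda_m\iP_m)$; expanding the left side by multilinearity and symmetry of $\Psi$, the right side by Minkowski's theorem~\eqref{eq:mixed volume definition}, and comparing the coefficient of $\lambda_1\cdots\lambda_m$, I obtain $m!\,\Psi(\iP_1,\ldots,\iP_m)=m!\,\iV(\iP_1,\ldots,\iP_m)$, which is~\eqref{eq:Sch3}.

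Multilinearity is immediate: support vectors and faces in a fixed direction are additive under Minkowski addition ($\hb_{a\iA+b\iB}=a\hb_\iA+b\hb_\iB$ and $\iF(a\iA+b\iB,\ub)=a\iF(\iA,\ub)+b\iF(\iB,\ub)$), and mixed volumes are multilinear, so both sides are linear in each $\iP_k$; and $\Psi$ is manifestly symmetric in $\iP_2,\ldots,\iP_m$ since $\iV(\iF_2^{\<i\>},\ldots,\iF_m^{\<i\>})$ is. The one substantial point — and the main obstacle — is that $\Psi$ is also symmetric under swapping $\iP_1$ with another argument. As these transpositions together with $S_{\{2,\ldots,m\}}$ generate $S_m$, it suffices to prove $\Psi(\iP_1,\iP_2,\iP_3,\ldots,\iP_m)=\Psi(\iP_2,\iP_1,\iP_3,\ldots,\iP_m)$.

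I would prove this by induction on $m$, the case $m=1$ being trivial (indeed $m=1$ of~\eqref{eq:Sch3} is just Lemma~\ref{lem:volume decomposition}). Fix $i$. By Lemma~\ref{lem:ssi hereditary} the facets $\iF_2^{\<i\>},\ldots,\iF_m^{\<i\>}$ form an $a$-type of the $(m-1)$-dimensional subspace $\iW^{\<i\>}$, whose facet directions are indexed by $\{j:(i,j)\in\iJ\}$; applying the $(m-1)$-dimensional case of~\eqref{eq:Sch3} to them, and then Lemma~\ref{lem:h-vector hereditary} to rewrite $(\hb_{\iF_2^{\<i\>}})_j$, I obtain after summing against $(\hb_{\iP_1})_i$:
\[
\Psi(\iP_1,\iP_2,\ldots,\iP_m)\ =\ \frac{1}{m(m-1)}\sum_{(i,j)\in\iJ}(\hb_{\iP_1})_i\big[(\hb_{\iP_2})_j\csc\theta^{\<ij\>}-(\hb_{\iP_2})_i\cot\theta^{\<ij\>}\big]\,\iV\big(\iF_3^{\<ij\>},\ldots,\iF_m^{\<ij\>}\big),
\]
with $\iF_k^{\<ij\>}=\iF_k^{\<i\>}\cap\iF_k^{\<j\>}$ as in~\eqref{eq: facet definition}. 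Subtracting the analogous expression with $\iP_1$ and $\iP_2$ interchanged: the $\cot$-terms cancel termwise because the factor $(\hb_{\iP_1})_i(\hb_{\iP_2})_i$ is already symmetric in the two polytopes, and the $\csc$-terms cancel because $(\hb_{\iP_1})_i(\hb_{\iP_2})_j-(\hb_{\iP_2})_i(\hb_{\iP_1})_j$ is antisymmetric under $(i,j)\mapsto(j,i)$, whereas the set $\iJ$, the weight $\csc\theta^{\<ij\>}=\csc\theta^{\<ji\>}$, and the mixed volume $\iV(\iF_3^{\<ij\>},\ldots,\iF_m^{\<ij\>})$ (note $\iF_k^{\<ij\>}=\iF_k^{\<ji\>}$) are all invariant under this swap. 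This gives the required symmetry and closes the induction. A small caveat: the facet lemmas invoked here are phrased for simple polytopes, so strictly one proves~\eqref{eq:Sch3} for a simple $a$-type, which is the only case needed in the sequel; apart from this, all the difficulty sits in the symmetry of $\Psi$ in its first argument — the rest is formal given Lemma~\ref{lem:volume decomposition} and Minkowski's theorem.
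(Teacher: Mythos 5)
Your proposal is correct and follows essentially the same route as the paper: polarize Lemma~\ref{lem:volume decomposition} via Minkowski's theorem, reduce to the symmetry of the right-hand side in its first two arguments, and establish that symmetry by induction using the $(m-1)$-dimensional case on the facets together with Lemma~\ref{lem:h-vector hereditary}. The only (immaterial) difference is that the paper groups the sum over unordered pairs $\{i,j\}$ and exhibits each bracket as symmetric in $\hb_{\iP_1},\hb_{\iP_2}$, whereas you subtract the swapped expression and invoke antisymmetry over the symmetric index set $\iJ$ — the same computation.
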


\smallskip

\begin{proof}
	We use induction over \ts $m\ge 1$.
	The case $m=1$ is trivial, so we assume that \ts $m\geq 2$, and that the lemma holds for~$(m-1)$.
	Denote the RHS of \eqref{eq:Sch3} by \. $\iU(\iP_1, \ldots, \iP_{m})$.
	By Lemma~\ref{lem:volume decomposition}, for all \. $\lambda_1,\ldots,\lambda_{m}> 0$, we have:
	\begin{align*}
		& \Vol_{m}(\lambda_1 \iP_1 + \ldots + \lambda_m \iP_m) \ = \  \frac{1}{m} \. \sum_{i=1}^{\ar} \. \big(\lambda_1 \hb_{\iP_1} + \ldots + \lambda_m \hb_{\iP_m}\big)_i \, \Vol_{m-1}\big(\lambda_1 \iF_1^{\<i\>} + \ldots + \lambda_m \iF_m^{\<i\>}\big).
	\end{align*}
	Applying \eqref{eq:mixed volume definition} to every term in the RHS of this equation, we obtain:
	\begin{align*}
		\Vol_{m}(\lambda_1 \iP_1 + \ldots + \lambda_m \iP_m)  \ & = \ \frac{1}{m} \. \sum_{i=1}^{\ar} \. \sum_{r=1}^{m} \lambda_{r}  \big( \hb_{\iP_{r}}\big)_i \. \sum_{j_2,\ldots, j_m=1}^{m}  \iV\big(\iF^{\<i\>}_{j_2},\ldots, \iF^{\<i\>}_{j_m}\big) \. \lambda_{j_2} \cdots \lambda_{j_m} \\
		& = \   \sum_{j_1, \ldots, j_m=1}^{m} \iU\big(\iP_{j_1},\ldots, \iP_{j_m}\big) \. \lambda_{j_1} \cdots \lambda_{j_m}.
	\end{align*}
	Hence it suffices to show that \ts $\iU(\iP_1,\ldots, \iP_m)$ \ts is symmetric in its arguments.
	%	In fact, it suffices to show that

	Let \. $\hb_{\iP_1} =  (\ah_1,\ldots, \ah_\ar)$ \. be the support vector of \ts $\iP_1$.
	By the induction hypothesis, we have:
	\begin{align*}
		\iU(\iP_1,\iP_2, \iP_3,\ldots, \iP_m) \ & = \ \frac{1}{m} \. \sum_{i=1}^{\ar} \. \ah_i  \. \iV(\iF^{\<i\>}_2,\ldots, \iF^{\<i\>}_{m})
		\\ & = \ \frac{1}{m(m-1)} \.  \sum_{i=1}^{\ar} \, \ah_i \sum_{(i,j) \in \iJ} \. \Big(\hb_{\iF_2^{\<i\>}}\Big)_{j} \.  \iV(\iF^{\<ij\>}_3, \ldots, \iF^{\<ij\>}_{m}).
	\end{align*}
	This implies:
	\begin{equation}\label{eq:U1}
		\iU(\iP_1,\iP_2, \iP_3,\ldots, \iP_m) \ = \  \frac{1}{m(m-1)} \.
		\sum_{\substack{(i,j) \in \iJ\\ i<j}} \bigg[ \ah_i \Big(\hb_{\iF_2^{\<i\>}}\Big)_{j}  \. + \. \ah_j \Big(\hb_{\iF_2^{\<j\>}}\Big)_{i} \bigg] \.  \iV\big(\iF^{\<ij\>}_3, \ldots, \iF^{\<ij\>}_{m}\big).
	\end{equation}
	Now let \ts $\gb:=\hb_{\iP_2}$ \ts be the support vector of \ts $\iP_2$.
	It follows from Lemma~\ref{lem:h-vector hereditary} that
	\begin{align*}
		\ah_i \Big(\hb_{\iF_2^{\<i\>}}\Big)_{j}  \. + \. \ah_j \Big(\hb_{\iF_2^{\<j\>}}\Big)_{i} \ = \  \big(\ah_i \ag_j + \ah_j \ag_i\big) \csc \theta^{\<ij\>} \ - \ \big( \ah_i \ag_i + \ah_j \ag_j \big) \cot \theta^{\<ij\>},
	\end{align*}
	and this expression is symmetric in \. $\hb$ \ts and \ts $\gb$.
	Together with \eqref{eq:U1}, this implies that
	\begin{equation*}
		\iU\big(\iP_1,\iP_2, \iP_3,\ldots, \iP_m\big) \ = \  \iU\big(\iP_2,\iP_1, \iP_3,\ldots, \iP_m\big).
	\end{equation*}
Since  \ts $\iU(\iP_1,\ldots, \iP_m)$ \ts is symmetric in \. $\iP_2,\ldots, \iP_m$ \. by definition,
this implies the induction claim and finishes the proof of the lemma.
\end{proof}

\smallskip

We remark that Lemma~\ref{lem:mixed volume decomposition} can be extended to all convex bodies by continuity, see e.g.\ \cite[Eq.~(5.19)]{Sch} for details.

\medskip

\subsection{Mixed volume matrices}\label{subsec:mixed volume matrices}
In notation above, let \ts $m\geq 2$, and let \. $\iP_1,\ldots, \iP_{m-2} \in \Ac$ \. be simple, strongly isomorphic
$m$-dimensional polytopes. The \defnb{mixed volume matrix} \. $\bM= (\aM_{ij})_{i,j \in [\ar]}$ \. is the $\ar \times \ar$ matrix given by
\begin{alignat*}{2}
	\aM_{ii} \ &:= \ -\ts (m-2)! \. \sum_{ (i,j) \in \iJ} \. \cot \theta^{\<ij\>} \ \iV\big(\iF_{1}^{\<ij\>}, \ldots, \iF_{m-2}^{\<ij\>}\big) \qquad && \text{for \ $1\le i \le \ar$,}\\
	\aM_{ij} \ &:= \  (m-2)! \,\ts \csc \theta^{\<ij\>} \, \iV\big(\iF_{1}^{\<ij\>}, \ldots, \iF_{m-2}^{\<ij\>}\big) \qquad &&\text{for \ $i\ne j$, \ $(i,j) \in \iJ$,}\\
		\aM_{ij} \ &:= \  0 \qquad &&\text{for \ $i\ne j$, \  $(i,j) \notin \iJ$.}
\end{alignat*}
Note that \. $\bM \ts = \ts \bM(\iW, \Ac, \iP_1,\ldots, \iP_{m-2})$ \. is a symmetric matrix with nonnegative nondiagonal entries.

\smallskip

The next lemma relates  the mixed volume matrix to the mixed volume of the corresponding polytopes.
Following the notation above, for a  polytope \ts $\iA \in \Ac$, denote by  \ts $\iF^{\<i\>}_{\iA}$ \ts
the facet of $\iA$ that corresponds to the normal direction $\ub^{\<i\>}$, for all \. $1\le i \le\ar$.

\smallskip

\begin{lemma}\label{lem:mixed volume inner product}
	Let $m \geq 2$, and let $\iA, \iB \in \Ac$ be simple strongly isomorphic \ts $m$-dimensional polytopes.
	Then:
	\begin{align}
		\label{eq:mv inner product 1}	 	  \big( \bMr\hb_{\iA} \big)_{i} \ &= \ (m-1)! \,\ts \iV\big(\iF_{\iA}^{\<i\>},\iF_{1}^{\<i\>}, \ldots, \iF_{m-2}^{\<i\>}\big) \quad \text{ for all } \ 1\le i \le \ar, \. \text{ and } \\
		\label{eq:mv inner product 2}  \. \big\< \hb_{\iA},   \bMr \hb_{\iB} \big\> \ &= \ m! \, \ts \iV(\iA,\iB, \iP_1, \ldots, \iP_{m-2}).
	\end{align}
\end{lemma}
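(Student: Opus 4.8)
The plan is to establish \eqref{eq:mv inner product 1} by a direct expansion of $\bMr\hb_{\iA}$ from the definition of the mixed volume matrix, and then to derive \eqref{eq:mv inner product 2} from \eqref{eq:mv inner product 1} by one further application of Lemma~\ref{lem:mixed volume decomposition}. Fix $i \in [\ar]$ and write $\big(\bMr\hb_{\iA}\big)_i = \aM_{ii}\,(\hb_{\iA})_i + \sum_{j\neq i}\aM_{ij}\,(\hb_{\iA})_j$. Since $\aM_{ij}=0$ whenever $i\neq j$ and $(i,j)\notin\iJ$, only the indices $j$ with $(i,j)\in\iJ$ survive; substituting the formulas for the diagonal and off-diagonal entries of $\bMr$ and collecting terms gives
\[
\big(\bMr\hb_{\iA}\big)_i \ = \ (m-2)! \. \sum_{j\,:\,(i,j)\in\iJ} \iV\big(\iF_1^{\<ij\>},\ldots,\iF_{m-2}^{\<ij\>}\big)\,\Big[(\hb_{\iA})_j\,\csc\theta^{\<ij\>} \. - \. (\hb_{\iA})_i\,\cot\theta^{\<ij\>}\Big].
\]
By Lemma~\ref{lem:h-vector hereditary} applied to $\iP = \iA$, the bracketed expression equals $\big(\hb_{\iF_{\iA}^{\<i\>}}\big)_j$, the coordinate of the support vector of the facet $\iF_{\iA}^{\<i\>}$ inside $\iW^{\<i\>}$ associated with the facet normal $\ub^{\<ij\>}$.

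Next I would recognize the resulting sum as a volume decomposition one dimension lower. By Lemma~\ref{lem:ssi hereditary} the facets $\iF_{\iA}^{\<i\>},\iF_1^{\<i\>},\ldots,\iF_{m-2}^{\<i\>}$ are $m-1$ simple, strongly isomorphic $(m-1)$-dimensional polytopes belonging to one $a$-type of $\iW^{\<i\>}$, whose facet normals are exactly the $\ub^{\<ij\>}$ with $(i,j)\in\iJ$ and whose corresponding $(m-2)$-faces are the $\iF_k^{\<ij\>}$. Applying Lemma~\ref{lem:mixed volume decomposition} in that $a$-type (i.e.\ with $m$ replaced by $m-1$, and its first argument taken to be $\iF_{\iA}^{\<i\>}$) yields
\[
\iV\big(\iF_{\iA}^{\<i\>},\iF_1^{\<i\>},\ldots,\iF_{m-2}^{\<i\>}\big) \ = \ \frac{1}{m-1}\sum_{j\,:\,(i,j)\in\iJ}\big(\hb_{\iF_{\iA}^{\<i\>}}\big)_j\,\iV\big(\iF_1^{\<ij\>},\ldots,\iF_{m-2}^{\<ij\>}\big).
\]
Combining the last two displays gives $\big(\bMr\hb_{\iA}\big)_i = (m-2)!\,(m-1)\,\iV\big(\iF_{\iA}^{\<i\>},\iF_1^{\<i\>},\ldots,\iF_{m-2}^{\<i\>}\big) = (m-1)!\,\iV\big(\iF_{\iA}^{\<i\>},\iF_1^{\<i\>},\ldots,\iF_{m-2}^{\<i\>}\big)$, which is \eqref{eq:mv inner product 1}.

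For \eqref{eq:mv inner product 2} I would pair with $\hb_{\iA}$ and invoke \eqref{eq:mv inner product 1} with $\iA$ replaced by $\iB$:
\[
\big\<\hb_{\iA},\,\bMr\hb_{\iB}\big\> \ = \ \sum_{i=1}^{\ar}(\hb_{\iA})_i\,\big(\bMr\hb_{\iB}\big)_i \ = \ (m-1)!\,\sum_{i=1}^{\ar}(\hb_{\iA})_i\,\iV\big(\iF_{\iB}^{\<i\>},\iF_1^{\<i\>},\ldots,\iF_{m-2}^{\<i\>}\big).
\]
Since $\iA,\iB,\iP_1,\ldots,\iP_{m-2}$ are $m$ simple, strongly isomorphic $m$-dimensional polytopes in $\Ac$, Lemma~\ref{lem:mixed volume decomposition} applied with its first argument equal to $\iA$ identifies the last sum with $m\,\iV(\iA,\iB,\iP_1,\ldots,\iP_{m-2})$, so that $\big\<\hb_{\iA},\bMr\hb_{\iB}\big\> = m!\,\iV(\iA,\iB,\iP_1,\ldots,\iP_{m-2})$, as desired.

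I expect the only delicate point to be the index bookkeeping: for each fixed $i$ one must identify the facet normals of $\iF^{\<i\>}$ in $\iW^{\<i\>}$ with precisely the indices $j$ such that $(i,j)\in\iJ$, so that Lemma~\ref{lem:h-vector hereditary} and the lower-dimensional instance of Lemma~\ref{lem:mixed volume decomposition} apply with matching index sets, and one must note that $\iF_{\iA}^{\<i\>}$ and $\iF_{\iB}^{\<i\>}$ lie in the common facet $a$-type of $\iW^{\<i\>}$ (by Lemma~\ref{lem:ssi hereditary}). Beyond this, the argument is a routine substitution.
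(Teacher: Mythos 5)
Your proposal is correct and follows essentially the same route as the paper: expand $\big(\bMr\hb_{\iA}\big)_i$ from the definition of the mixed volume matrix, recognize the bracketed combination as $\big(\hb_{\iF_{\iA}^{\<i\>}}\big)_j$ via Lemma~\ref{lem:h-vector hereditary}, apply Lemma~\ref{lem:mixed volume decomposition} one dimension down to get \eqref{eq:mv inner product 1}, and then sum against $\hb_{\iA}$ with one more application of Lemma~\ref{lem:mixed volume decomposition} for \eqref{eq:mv inner product 2}. The only difference is that you spell out the zero off-diagonal entries and the index bookkeeping more explicitly, which the paper leaves implicit.
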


\smallskip

\begin{proof}
	For the first part, we have:
	\begin{align*}
		& \big( \bM\hb_{\iA} \big)_{i} \ = \ (m-2)! \, \sum_{(i,j) \in \iJ} \. \bigg[ \big(\hb_{\iA}\big)_j \.\csc \theta^{\<ij\>} \ - \ \big(\hb_{\iA}\big)_i \.\cot \theta^{\<ij\>} \bigg] \. \iV\big(\iF^{\<ij\>}_1, \ldots, \iF^{\<ij\>}_{m-2}\big) \\
		& =_{\text{Lem}~\ref{lem:h-vector hereditary}} \ (m-2)! \,\. \sum_{(i,j) \in \iJ} \. \Big(\hb_{\iF^{\<i\>}_{\iA}}\Big)_{j} \, \iV\big(\iF^{\<ij\>}_1, \ldots, \iF^{\<ij\>}_{m-2}\big) \\
		& =_{\text{Lem}~\ref{lem:mixed volume decomposition}} \ (m-1)! \,\. \iV\big(\iF_{\iA}^{\<i\>},\iF_{1}^{\<i\>}, \ldots, \iF_{m-2}^{\<i\>}\big).
	\end{align*}
	For the second part, we similarly have:
	\begin{align*}
		\big\< \hb_{\iA},   \bM \hb_{\iB} \big\>  \ &= \ \sum_{i=1}^{\ar}  \big(\hb_{\iA}\big)_i \. \big( \bM \hb_{\iB}\big)_i \ =_{\eqref{eq:mv inner product 1}} \ (m-1)! \,\. \sum_{i=1}^{\ar} \. \big(\hb_{\iA}\big)_i \.\iV\big(\iF_{\iB}^{\<i\>},\iF_{1}^{\<i\>}, \ldots, \iF_{m-2}^{\<i\>}\big) \\
		&=_{\text{Lem}~\ref{lem:mixed volume decomposition}} \
		m! \, \. \iV\big(\iA,\iB, \iP_1, \ldots, \iP_{m-2}\big),
	\end{align*}
	as desired.
\end{proof}

\medskip

\subsection{Combinatorial atlas for the Alexandrov--Fenchel inequality}
Let \ts $m\geq 3$.  By translating the polytopes
\. $\iP_1,\ldots, \iP_{m-2} \in \Ac$ \. if necessary, without loss of generality we can
assume that all \ts $\iP_i$ \ts contain the origin~$\0$ in the interior.
We associate to this data a combinatorial atlas
\ts $\AA:= \AA(\iW, \Ac, \iP_1,\ldots, \iP_{m-2})$ \ts of dimension \ts $\ar$, as follows.
Consider an acyclic digraph \. $\Qf=(\Vf,\Ef)$, where \.
\begin{align*}
	\Vf^+ \, := \,  \{ \vf  \.\}, \qquad \Vf^0 \, := \, \big\{ \vf^{\<1\>},\ldots, \vf^{\<\ar\>} \big\},
\quad \text{and} \quad\Ef \, := \, \big\{ (\vf, \vf^{\<1\>}), \ldots, (\vf, \vf^{\<\ar\>}) \big\}.
\end{align*}
In other words, the digraph~$\Qf$ has one source vertex \ts $\vf$ \ts connected to \ts $\ar$ \ts sink vertices
\ts $\vf^{\<1\>}, \ldots \vf^{\<\ar\>}$.
Let the associated matrix and associated vector of the source vertex \ts $\vf$ \ts be given by
\[ \bM \, = \, \bM_{\vf} \ := \ \bM(\iW, \Ac, \iP_1,\ldots, \iP_{m-2}), \qquad \hb \, = \, \hb_{\vf} \ := \ \hb_{\iP_1}. \]
Similarly, let associated matrix of sink vertices \ts $\vf^{\<i\>}$ \ts be given by
\[  \bM^{\<i\>} \, = \,  \bM_{\vf^{\<i\>}} \ := \ \bM\bigl(\iW^{\<i\>}, \Ac^{\<i\>}, \iF_2^{\<i\>},\ldots, \iF_{m-2}^{\<i\>}\bigr), \]
 where \ts $\iW^{\<i\>}\ssu \iW$ \ts is the hyperplane s.t.\ \. $\iW^{\<i\>}\bot\ub^{\<i\>}$, and where \ts $\Ac^{\<i\>}$ \ts
 is the $a$-type for $(m-1)$-dimensional polytopes \. $\iF_2^{\<i\>},\ldots, \iF_{m-2}^{\<i\>}$ \. which are strongly isomorphic by Lemma~\ref{lem:ssi hereditary}. Note that polytopes \ts $\iF_j^{\<i\>}$ \ts
 can be assumed to be contained in \ts $\iW^{\<i\>}$ \ts by translating them if necessary.
 Note also that \ts $\bM$,  \ts $\hb$ \ts and \ts $\bM^{\<i\>}$ \ts are well-defined since \ts $m\geq 3$.

 For each edge $(\vf, \vf^{\<i\>})$, the associated linear transformation
 $\bT^{\langle i \rangle}: \ts \Rb^{\ar} \to \Rb^{\ar}$
 is given by
 \[  \big(\bT^{\<i\>} \vb\big)_{j} \ := \
 \begin{cases}
 	   \av_j \, \csc \theta^{\<ij\>} \ - \   \av_i \, \cot \theta^{\<ij\>} & \ \text{ if } \ (i,j) \in \iJ,\\
 	   0 & \ \text{ otherwise}.
 \end{cases}
  \]

 \smallskip

 We now verify conditions in Theorem~\ref{t:Hyp} through the following series of lemmas.

 \smallskip

 \begin{lemma}\label{lem:mixed volume inner product child}
 	Let \. $\iA, \iB \in \Ac$ \. be simple strongly isomorphic \ts $m$-dimensional polytopes.
 	Then:
 	\begin{align}
% 	 \label{eq:mv inner product 3}	  \bT^{\<i\>} \hb_{\iA} \ &= \ \hb_{\iF^{\<i\>}_{\iA}}, \\
 	 \label{eq:mv inner product 3}	 \big\< \ts\bT^{\<i\>} \hb_{\iA},  \ts \bMr^{\<i\>}  \bT^{\<i\>} \hb_{\iB} \big\> \ &= \ (m-1)! \,\.  \iV\big(\iF_{\iA}^{\<i\>},\iF_{\iB}^{\<i\>}, \iF_{2}^{\<i\>}, \ldots, \iF_{m-2}^{\<i\>}\big),
 	\end{align}
 for every \ts $1\le i \le\ar$.
 \end{lemma}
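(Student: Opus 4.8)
The plan is to read the left-hand side of \eqref{eq:mv inner product 3} as an instance of Lemma~\ref{lem:mixed volume inner product}\eqref{eq:mv inner product 2}, applied one dimension lower inside the hyperplane \ts $\iW^{\<i\>}$. By construction the sink matrix \ts $\bM^{\<i\>}$ \ts is the mixed volume matrix \ts $\bM(\iW^{\<i\>},\Ac^{\<i\>},\iF^{\<i\>}_2,\ldots,\iF^{\<i\>}_{m-2})$ \ts of \ts $(m-1)$-dimensional polytopes, so once we know that \ts $\bT^{\<i\>}\hb_{\iA}$ \ts and \ts $\bT^{\<i\>}\hb_{\iB}$ \ts are the support vectors of the facets \ts $\iF^{\<i\>}_{\iA}$ \ts and \ts $\iF^{\<i\>}_{\iB}$ \ts (translated into \ts $\iW^{\<i\>}$), that lemma yields the right-hand side of \eqref{eq:mv inner product 3} directly.

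First I would verify the hypotheses needed to invoke Lemma~\ref{lem:mixed volume inner product} in dimension \ts $m-1$. Since \ts $m\ge 3$, we have \ts $m-1\ge 2$. Since \ts $\iA,\iB,\iP_1,\ldots,\iP_{m-2}$ \ts all lie in the single \ts $a$-type \ts $\Ac$, they are mutually simple and strongly isomorphic, so Lemma~\ref{lem:ssi hereditary} shows that the facets in the common normal direction \ts $\ub^{\<i\>}$, namely \ts $\iF^{\<i\>}_{\iA},\iF^{\<i\>}_{\iB},\iF^{\<i\>}_1,\ldots,\iF^{\<i\>}_{m-2}$, are simple, strongly isomorphic \ts $(m-1)$-dimensional polytopes, hence all belong to the \ts $a$-type \ts $\Ac^{\<i\>}$ \ts of \ts $\iW^{\<i\>}$ \ts that underlies \ts $\bM^{\<i\>}$.

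The main step is the identity \ts $\bT^{\<i\>}\hb_{\iP}=\hb_{\iF^{\<i\>}_{\iP}}$ \ts for every \ts $\iP\in\Ac$, where the \ts $(m-1)$-dimensional support vector \ts $\hb_{\iF^{\<i\>}_{\iP}}$ \ts is viewed in \ts $\Rb^{\ar}$ \ts by assigning the coordinate \ts $0$ \ts to each \ts $j$ \ts with \ts $(i,j)\notin\iJ$. This is a comparison of two formulas with identical right-hand sides: the definition \ts $(\bT^{\<i\>}\vb)_j=\av_j\csc\theta^{\<ij\>}-\av_i\cot\theta^{\<ij\>}$ \ts for \ts $(i,j)\in\iJ$ \ts and \ts $0$ \ts otherwise, and Lemma~\ref{lem:h-vector hereditary}, which gives \ts $(\hb_{\iF^{\<i\>}_{\iP}})_j=(\hb_{\iP})_j\csc\theta^{\<ij\>}-(\hb_{\iP})_i\cot\theta^{\<ij\>}$ \ts for \ts $(i,j)\in\iJ$. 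Since the facet normals of the polytopes in \ts $\Ac^{\<i\>}$ \ts are precisely the \ts $\ub^{\<ij\>}$ \ts with \ts $(i,j)\in\iJ$, the coordinates \ts $j$ \ts with \ts $(i,j)\notin\iJ$ \ts fall outside \ts $\supp(\bM^{\<i\>})$ \ts and play no role in the inner product, and because \ts $\ub^{\<ij\>}\perp\ub^{\<i\>}$ \ts the support vector is unaffected by translating the facet into \ts $\iW^{\<i\>}$.

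With \ts $\iP=\iA$ \ts and \ts $\iP=\iB$, this reduces the left-hand side of \eqref{eq:mv inner product 3} to \ts $\big\<\hb_{\iF^{\<i\>}_{\iA}},\bM^{\<i\>}\hb_{\iF^{\<i\>}_{\iB}}\big\>$, and Lemma~\ref{lem:mixed volume inner product}\eqref{eq:mv inner product 2}, applied in dimension \ts $m-1$ \ts with \ts $(\iW^{\<i\>},\Ac^{\<i\>},\iF^{\<i\>}_2,\ldots,\iF^{\<i\>}_{m-2})$ \ts in place of \ts $(\iW,\Ac,\iP_1,\ldots,\iP_{m-2})$ \ts and \ts $\iF^{\<i\>}_{\iA},\iF^{\<i\>}_{\iB}$ \ts in place of \ts $\iA,\iB$, evaluates this to \ts $(m-1)!\,\iV(\iF^{\<i\>}_{\iA},\iF^{\<i\>}_{\iB},\iF^{\<i\>}_2,\ldots,\iF^{\<i\>}_{m-2})$, which is \eqref{eq:mv inner product 3}. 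I expect the only fiddly points to be the index-set and embedding bookkeeping in the main step and the confirmation of the \ts $a$-type hypotheses; there is no analytic content beyond the two previously proved lemmas.
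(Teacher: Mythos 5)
Your proof is correct and follows the paper's argument exactly: the key identity \ts $\bT^{\<i\>}\hb_{\iA}=\hb_{\iF^{\<i\>}_{\iA}}$ \ts via Lemma~\ref{lem:h-vector hereditary} and the definition of \ts $\bT^{\<i\>}$, followed by an application of \eqref{eq:mv inner product 2} to \ts $\bM^{\<i\>}$ \ts one dimension lower. The extra bookkeeping you flag (the \ts $a$-type hypotheses via Lemma~\ref{lem:ssi hereditary} and the coordinates outside \ts $\iJ$) is handled implicitly in the paper and is a welcome clarification rather than a deviation.
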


 \smallskip

 \begin{proof}
 	It follows from Lemma~\ref{lem:h-vector hereditary} and  the definition of \ts $\bT^{\<i\>}$, that
 	\[  \bT^{\<i\>} \hb_{\iA} \ = \ \hb_{\iF^{\<i\>}_{\iA}}  \qquad \text{ and } \qquad \bT^{\<i\>} \hb_{\iB} \ = \ \hb_{\iF^{\<i\>}_{\iB}}.\]
 	The conclusion of the lemma now follows by applying \eqref{eq:mv inner product 2} to $\bM^{\<i\>}$.
 \end{proof}

 \smallskip

 \begin{lemma}\label{lem:AF-Inh}
 	Combinatorial atlas \ts $\AA$ \ts defined above satisfies \eqref{eq:Inh}.
 \end{lemma}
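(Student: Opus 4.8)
The plan is to verify \eqref{eq:Inh} for the unique non-sink vertex $\vf$ of $\Qf$ (recall $\Vfp=\{\vf\}$), that is, to show
\[
(\bM\vb)_i \ = \ \big\langle \bT^{\<i\>}\vb,\ \bM^{\<i\>}\,\bT^{\<i\>}\hb\big\rangle
\qquad\text{for all } i\in\supp(\bM),\ \vb\in\Rb^{\ar},
\]
where $\bM=\bM(\iW,\Ac,\iP_1,\ldots,\iP_{m-2})$, $\hb=\hb_{\iP_1}$, and $\bM^{\<i\>}=\bM(\iW^{\<i\>},\Ac^{\<i\>},\iF_2^{\<i\>},\ldots,\iF_{m-2}^{\<i\>})$. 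Both sides are linear in $\vb$, and by Lemma~\ref{lem:h-vector span} the support vectors $\hb_{\iA}$, as $\iA$ ranges over simple polytopes in $\Ac$ strongly isomorphic to $\iP_1$, fill a neighbourhood of $\hb_{\iP_1}$ and hence span $\Rb^{\ar}$. So it is enough to check the identity for $\vb=\hb_{\iA}$ with $\iA$ such a polytope.

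For such $\iA$, the left-hand side is exactly Lemma~\ref{lem:mixed volume inner product}\eqref{eq:mv inner product 1}:
\[
(\bM\hb_{\iA})_i \ = \ (m-1)!\;\iV\big(\iF^{\<i\>}_{\iA},\iF^{\<i\>}_1,\ldots,\iF^{\<i\>}_{m-2}\big),
\]
while the right-hand side, specializing $\iB=\iP_1$ in Lemma~\ref{lem:mixed volume inner product child}\eqref{eq:mv inner product 3}, equals
\[
\big\langle \bT^{\<i\>}\hb_{\iA},\ \bM^{\<i\>}\,\bT^{\<i\>}\hb_{\iP_1}\big\rangle
\ = \ (m-1)!\;\iV\big(\iF^{\<i\>}_{\iA},\iF^{\<i\>}_{\iP_1},\iF^{\<i\>}_2,\ldots,\iF^{\<i\>}_{m-2}\big).
\]
Since $\iF^{\<i\>}_{\iP_1}=\iF(\iP_1,\ub^{\<i\>})=\iF^{\<i\>}_1$ by definition, the two mixed volumes coincide, which gives \eqref{eq:Inh} for $\vf$ and completes the proof.

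I do not expect a genuine obstacle here: the real content has already been isolated into Lemmas~\ref{lem:mixed volume inner product} and~\ref{lem:mixed volume inner product child} (which in turn rest on the volume-decomposition identities of Lemma~\ref{lem:volume decomposition} and Lemma~\ref{lem:mixed volume decomposition}, and on the facet behaviour of support vectors in Lemma~\ref{lem:h-vector hereditary}), and the map $\bT^{\<i\>}$ was designed precisely so that $\bT^{\<i\>}\hb_{\iA}=\hb_{\iF^{\<i\>}_{\iA}}$. The only point that needs a line of justification is the reduction to support vectors, for which one invokes linearity of $\bM$, $\bM^{\<i\>}$, $\bT^{\<i\>}$ together with the spanning statement of Lemma~\ref{lem:h-vector span}.
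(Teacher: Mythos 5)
Your proof is correct and follows essentially the same route as the paper's: reduce by linearity and Lemma~\ref{lem:h-vector span} to the case $\vb=\hb_{\iA}$, then match \eqref{eq:mv inner product 1} against \eqref{eq:mv inner product 3} with $\iB=\iP_1$. No issues.
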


 \smallskip

 \begin{proof}
 By linearity and by Lemma~\ref{lem:h-vector span}, it suffices to prove \eqref{eq:Inh}
 when \. $\vb= \hb_{\iA}$ \. is the support vector of some simple polytope \. $\iA \in \Ac$.
 For every \ts $i \in [\ar]$, we have:
 	\begin{align*}
 		& (\bM \vb )_{i} \ = \ (\bM \hb_{\iA} )_{i} \ =_{\eqref{eq:mv inner product 1}} \  (m-1)! \,\ts \iV\big(\iF_{\iA}^{\<i\>},\iF_{1}^{\<i\>}, \ldots, \iF_{m-2}^{\<i\>}\big).
 	\end{align*}
 On the other hand, we also have
 	\begin{align*}
 		\big \langle   \bT^{\<i\>}\vb, \, \bM^{\<i\>}  \bT^{\<i\>}\hb \big \rangle \ = \ \big\< \bT^{\<i\>} \hb_{\iA},   \bM^{\<i\>}  \bT^{\<i\>} \hb_{\iP_1} \big\>
 		 \  & =_{\eqref{eq:mv inner product 3}} \  (m-1)! \,\ts \iV\big(\iF_{\iA}^{\<i\>},\iF_{1}^{\<i\>}, \ldots, \iF_{m-2}^{\<i\>}\big),
 	\end{align*}
as desired.
 \end{proof}

\smallskip

\begin{lemma}\label{lem:AF-Irr}
	 	The associated matrix \. $\bMr=\bMr_{\vf}$ \. in the atlas \ts $\AA$ \ts is irreducible.
\end{lemma}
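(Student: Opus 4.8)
The plan is to read off the off-diagonal sign pattern of $\bM=\bM_{\vf}=\bM(\iW,\Ac,\iP_1,\ldots,\iP_{m-2})$ and thereby reduce irreducibility to connectivity of the \emph{dual graph} of the polytope $\iP_1$ — facets as vertices, ridges as edges — after which a short topological argument (or an appeal to a standard fact) finishes the job.

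First I would record the entries: for $i\ne j$ the entry $\aM_{ij}$ vanishes unless $(i,j)\in\iJ$, in which case $\aM_{ij}=(m-2)!\,\csc\theta^{\<ij\>}\,\iV\big(\iF_1^{\<ij\>},\ldots,\iF_{m-2}^{\<ij\>}\big)$. For $(i,j)\in\iJ$ and $m\geq 3$, the faces $\iF_k^{\<ij\>}$ are $(m-2)$-dimensional (dimension $\ge 1$) polytopes in the $(m-2)$-dimensional subspace $\iW^{\<i\>}\cap\iW^{\<j\>}$, strongly isomorphic by Lemma~\ref{lem:ssi hereditary}, so their mixed volume is strictly positive by Lemma~\ref{lem:mixed volume positivity}; since also $\theta^{\<ij\>}\in(0,\pi)$, we get $\aM_{ij}>0$ exactly when $(i,j)\in\iJ$. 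I would also note that every index $i$ has a $\iJ$-neighbour — the $(m-1)$-polytope $\iF^{\<i\>}$ with $m-1\geq 2$ has at least one facet, which is a ridge $\iF^{\<i\>}\cap\iF^{\<j\>}$ of $\iP_1$ — so $\supp(\bM)=[\ar]$, and ``irreducible'' and ``irreducible on its support'' coincide here. Since a symmetric matrix is irreducible iff the graph on $[\ar]$ with edge set $\{\{i,j\}:\aM_{ij}\ne 0\}$ is connected, it remains only to show that $G:=([\ar],\iJ)$ is connected.

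For connectivity of $G$ I would argue by contradiction: if $[\ar]=I\sqcup I'$ with $I,I'\ne\varnothing$ and no $\iJ$-edge between them, then $\dim\big(\iF^{\<i\>}\cap\iF^{\<j\>}\big)\le m-3$ for all $i\in I$, $j\in I'$, so the closed sets $U:=\bigcup_{i\in I}\iF^{\<i\>}$ and $U':=\bigcup_{j\in I'}\iF^{\<j\>}$ cover $\partial\iP_1$ with $U\cap U'$ contained in a union of faces of $\iP_1$ of dimension $\le m-3$. The relative interior of a facet indexed by $I$ (resp.\ $I'$) lies in $U\setminus U'$ (resp.\ $U'\setminus U$), and both $U\setminus U'$ and $U'\setminus U$ are open in $\partial\iP_1$ because $U,U'$ are closed and cover it; hence they disconnect $\partial\iP_1\setminus(U\cap U')$. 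But $\partial\iP_1$ is homeomorphic to an $(m-1)$-sphere with $m-1\geq 2$, and deleting a subcomplex of dimension $\le m-3$ — codimension $\ge 2$ — cannot disconnect it, a contradiction. Equivalently, I could simply quote the classical fact that the dual graph of any convex polytope is connected, e.g.\ via a Bruggesser--Mani line shelling of $\partial\iP_1$ (each facet past the first shares a ridge with an earlier one, yielding a spanning tree of $G$).

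The routine parts — the sign computation and the reduction to graph connectivity — are immediate from the definitions together with Lemmas~\ref{lem:ssi hereditary} and~\ref{lem:mixed volume positivity}. The one substantive point is connectivity of the dual graph; the cleanest fully elementary version is the general-position observation that a generic arc on the boundary sphere $\partial\iP_1$ joining points in the relative interiors of two facets meets only ridges, transversally and finitely often, and the sequence of facets it visits is a walk in $G$. I expect that to be the main obstacle to write out carefully.
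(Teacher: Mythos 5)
Your proposal is correct and follows essentially the same route as the paper: read off that $\aM_{ij}>0$ exactly for $(i,j)\in\iJ$ via Lemma~\ref{lem:mixed volume positivity}, then reduce to connectivity of the facet graph $G=([\ar],\iJ)$, which the paper simply asserts as a known fact while you supply the (standard) justification.
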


\smallskip

\begin{proof}
	It follows from Lemma~\ref{lem:mixed volume positivity} and the definition of $\bM$ that,
	for every distinct $i,j \in [\ar]$,
	we have \. $\aM_{ij}>0$ \. if and only if $(i,j) \in \iJ$.  The lemma now states that
the graph \ts $G=([\ar],\iJ)$ \ts is connected.  To see this, observe that \ts $G$ \ts is the facet graph
of every polytope \ts $A \in \Ac$, and thus connected.
\end{proof}

\smallskip

\begin{lemma}\label{lem:AF-h-Pos}
	Vectors \. $\hb_{\vf}$ \. and \. $\bMr_{\vf}\hb_{\vf}$ \. are strictly positive.
\end{lemma}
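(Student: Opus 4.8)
The plan is to handle the two vectors separately; in both cases the statement reduces immediately to facts already in place, so there is essentially no obstacle — the work is just a matter of citing the right earlier result.

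First I would dispose of $\hb_{\vf}$. Recall that $\hb_{\vf}=\hb_{\iP_1}$, and that at the start of this subsection we translated $\iP_1,\ldots,\iP_{m-2}$ so that each contains the origin $\0$ in its interior. As was observed right after the definition of the support vector, $\hb_{\iP}$ is strictly positive exactly when $\0\in\operatorname{int}(\iP)$; hence $\hb_{\vf}=\hb_{\iP_1}$ is strictly positive.

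Next I would treat $\bM_{\vf}\hb_{\vf}$ by applying \eqref{eq:mv inner product 1} of Lemma~\ref{lem:mixed volume inner product} with $\iA=\iP_1$ (the polytopes $\iP_1,\ldots,\iP_{m-2}$ being simple, strongly isomorphic and $m$-dimensional by hypothesis). Since $\iF_{\iP_1}^{\<i\>}=\iF_1^{\<i\>}$, this gives, for every $i\in[\ar]$,
\[
\big(\bM_{\vf}\hb_{\vf}\big)_i \ = \ (m-1)!\,\ts \iV\big(\iF_1^{\<i\>},\iF_1^{\<i\>},\iF_2^{\<i\>},\ldots,\iF_{m-2}^{\<i\>}\big),
\]
which is, up to the positive constant $(m-1)!$, the $(m-1)$-dimensional mixed volume of the $m-1$ facets $\iF_1^{\<i\>},\iF_1^{\<i\>},\iF_2^{\<i\>},\ldots,\iF_{m-2}^{\<i\>}$ translated into the $(m-1)$-dimensional subspace $\iW^{\<i\>}$. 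Because $m\geq 3$, Lemma~\ref{lem:ssi hereditary} guarantees that these facets are simple, strongly isomorphic, $(m-1)$-dimensional polytopes in $\iW^{\<i\>}$, so Lemma~\ref{lem:mixed volume positivity} applies and yields that this mixed volume is strictly positive. Hence $\big(\bM_{\vf}\hb_{\vf}\big)_i>0$ for all $i$, i.e.\ $\bM_{\vf}\hb_{\vf}$ is strictly positive.

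The only point that requires a moment of care — rather than a genuine difficulty — is the bookkeeping: one must notice that substituting $\iA=\iP_1$ into \eqref{eq:mv inner product 1} produces the facet $\iF_1^{\<i\>}$ with multiplicity two, and that the hypothesis $m\geq 3$ is what makes all the faces involved genuinely $(m-1)$-dimensional, so that the positivity lemma is applicable.
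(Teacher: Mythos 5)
Your proof is correct and follows exactly the paper's argument: positivity of $\hb_{\vf}=\hb_{\iP_1}$ from the origin being interior to $\iP_1$, and positivity of $\bM_{\vf}\hb_{\vf}$ from \eqref{eq:mv inner product 1} with $\iA=\iP_1$ together with Lemma~\ref{lem:mixed volume positivity}. Your extra remark invoking Lemma~\ref{lem:ssi hereditary} to confirm the facets are $(m-1)$-dimensional is a harmless (and sensible) bit of added care.
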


\smallskip

\begin{proof}
	The strict positivity of \. $\hb_v=\hb_{\iP_1}$  \. follows from the assumption that the origin is contained in the interior of~$\iP_1$.
	Now, by \eqref{eq:mv inner product 1}, we have:
	\[  \big( \bM_v\hb_v\big)_{i} \ = \ (m-1)! \,\. \iV\big(\iF_{1}^{\<i\>},\iF_{1}^{\<i\>},\iF_{2}^{\<i\>}, \ldots, \iF_{m-2}^{\<i\>}\big)  \]
	for every \ts $1\le i \le \ar$.  By Lemma~\ref{lem:mixed volume positivity}, the RHS is strictly positive.
\end{proof}

\smallskip

In particular, Lemma~\ref{lem:AF-h-Pos} implies that \ts $\supp(\bM_v)=[\ar]$.

\smallskip

\begin{lemma}\label{lem:AF-Pull}
	Combinatorial atlas \ts $\AA$ \ts satisfies \eqref{eq:PullEq}.
\end{lemma}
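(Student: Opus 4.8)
The plan is to reduce \eqref{eq:PullEq} to a single mixed-volume identity and then obtain it from the decomposition lemmas already proved. First I would record the two facts that make the claim concrete: $\supp(\bM)=[\ar]$ (noted just after Lemma~\ref{lem:AF-h-Pos}) and $\hb=\hb_{\iP_1}$, so that \eqref{eq:PullEq} asserts
\[
\sum_{i=1}^{\ar}\big(\hb_{\iP_1}\big)_i\,\big\<\bT^{\<i\>}\vb,\,\bM^{\<i\>}\bT^{\<i\>}\vb\big\> \ = \ \big\<\vb,\bM\vb\big\> \qquad\text{for every }\vb\in\Rb^{\ar}.
\]
Both sides are quadratic forms in $\vb$ (the data $\bM$, $\bM^{\<i\>}$, $\bT^{\<i\>}$ are fixed, and each $\bM^{\<i\>}$ is symmetric), so the identity is equivalent to the equality of the two associated symmetric bilinear forms, and a symmetric bilinear form vanishes identically once it vanishes on $S\times S$ for a spanning set $S$. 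By Lemma~\ref{lem:h-vector span}, the differences $\hb_{\iQ}-\hb_{\iP_1}$ over simple $\iQ\in\Ac$ strongly isomorphic to $\iP_1$ fill a neighborhood of the origin, hence the support vectors $\hb_{\iA}$ of such polytopes span $\Rb^{\ar}$. Thus it suffices to check
\[
\sum_{i=1}^{\ar}\big(\hb_{\iP_1}\big)_i\,\big\<\bT^{\<i\>}\hb_{\iA},\,\bM^{\<i\>}\bT^{\<i\>}\hb_{\iB}\big\> \ = \ \big\<\hb_{\iA},\bM\hb_{\iB}\big\>
\]
for all such $\iA,\iB$.

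Next I would evaluate both sides. On the left, Lemma~\ref{lem:mixed volume inner product child} rewrites the $i$-th summand as $(m-1)!\,\big(\hb_{\iP_1}\big)_i\,\iV\big(\iF_{\iA}^{\<i\>},\iF_{\iB}^{\<i\>},\iF_{2}^{\<i\>},\ldots,\iF_{m-2}^{\<i\>}\big)$. Now apply the mixed-volume decomposition, Lemma~\ref{lem:mixed volume decomposition}, to the ordered tuple $\iP_1,\iA,\iB,\iP_2,\ldots,\iP_{m-2}\in\Ac$, with $\iP_1$ in the first slot so that the coefficient is $\big(\hb_{\iP_1}\big)_i$ and the remaining $m-1$ polytopes contribute their facets; this collapses $\sum_i \big(\hb_{\iP_1}\big)_i\,\iV(\cdots)$ to $m\cdot\iV\big(\iP_1,\iA,\iB,\iP_2,\ldots,\iP_{m-2}\big)$, which by symmetry of mixed volumes equals $m\cdot\iV(\iA,\iB,\iP_1,\ldots,\iP_{m-2})$. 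So the left-hand side equals $m!\,\iV(\iA,\iB,\iP_1,\ldots,\iP_{m-2})$. On the right, equation~\eqref{eq:mv inner product 2} of Lemma~\ref{lem:mixed volume inner product} gives $\big\<\hb_{\iA},\bM\hb_{\iB}\big\>=m!\,\iV(\iA,\iB,\iP_1,\ldots,\iP_{m-2})$. The two sides agree, which proves \eqref{eq:PullEq}.

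The computation is just this three-lemma chain, so I expect no serious obstacle; the two places needing care are structural. One is the reduction to support vectors: one should note explicitly that both sides are genuinely (bi)linear in $\vb$ and invoke Lemma~\ref{lem:h-vector span} for a spanning family. The other is the index bookkeeping in Lemma~\ref{lem:mixed volume decomposition}: the decomposition has to be applied with $\iP_1$ as the \emph{first} argument so that $\big(\hb_{\iP_1}\big)_i$, not $\big(\hb_{\iA}\big)_i$, is the coefficient, and one must match the facets $\iF_{2}^{\<i\>},\ldots,\iF_{m-2}^{\<i\>}$ in Lemma~\ref{lem:mixed volume inner product child} with the facets of $\iP_2,\ldots,\iP_{m-2}$ in $\iW^{\<i\>}$, so that the two facet tuples literally coincide. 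Neither is difficult, but both are easy to mishandle, which is why I would spell them out.
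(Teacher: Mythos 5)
Your proposal is correct and follows essentially the same route as the paper: reduce the bilinear identity to support vectors $\hb_{\iA},\hb_{\iB}$ via Lemma~\ref{lem:h-vector span}, evaluate the left side with Lemma~\ref{lem:mixed volume inner product child}, the right side with \eqref{eq:mv inner product 2}, and match them through Lemma~\ref{lem:mixed volume decomposition}. Your extra care in spelling out which tuple the decomposition lemma is applied to (with $\iP_1$ in the first slot) is exactly the step the paper leaves implicit, and your bookkeeping there is right.
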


\smallskip

\begin{proof}
	Let us to show that
	\begin{equation}\label{eq:AF-Pull 1}
			\sum_{i =1}^{\ar} \. \ah_{i} \, \big \langle   \bT^{\<i\>}\vb, \, \bM^{\<i\>} \,  \bT^{\<i\>}\wb  \big \rangle  \ = \ \langle \vb, \bM \wb \rangle \qquad \text{ for every } \vb,\wb \in \Rb^{\ar},
	\end{equation}
Then, by substituting \. $\wb \gets \vb$ \. and the fact that \. $\supp(\bM)=[\ar]$, the equation \eqref{eq:PullEq} follows.

By bilinearity and Lemma~\ref{lem:h-vector span}, it suffices to prove \eqref{eq:AF-Pull 1} for \. $\vb=\hb_{\iA}$ \. and \. $\wb=\hb_{\iB}$, where \ts $\iA, \iB \in \Ac$ \ts are simple strongly isomorphic polytopes.
  We have:
  \begin{align*}%\label{eq:AB1}
  	&\sum_{i =1}^{\ar} \. \ah_{i} \, \big \langle   \bT^{\<i\>}\vb, \, \bM^{\<i\>} \,  \bT^{\<i\>}\wb  \big \rangle
  	\ = \ \sum_{i =1}^{\ar} \. \big(\hb_{\iP_1}\big)_i \, \big \langle   \bT^{\<i\>}\hb_{\iA}, \, \bM^{\<i\>} \,  \bT^{\<i\>}\hb_{\iB}  \big \rangle  \\
  	& \hskip1.cm =_{\eqref{eq:mv inner product 3}} \
  	(m-1)! \,\ts\sum_{i =1}^{\ar} \. \big(\hb_{\iP_1}\big)_i \, \iV\big(\iF_{\iA}^{\<i\>},\iF_{\iB}^{\<i\>}, \iF_{2}^{\<i\>}, \ldots, \iF_{m-2}^{\<i\>}\big).
%  	\ =_{\text{Lem \ref{lem:mixed volume decomposition}}} \ m! \. \iV(\iP_1, \iA, \iB, \iP_2,\ldots, \iP_{m-2})\\
  \end{align*}
On the other hand, we also have
\begin{equation*}
%\label{eq:AB2}
\langle \vb, \bM \wb \rangle \ = \ \langle \hb_{\iA}, \bM \hb_{\iB} \rangle \ =_{\eqref{eq:mv inner product 1}} \  m! \. \iV(\iA, \iB, \iP_1,\ldots, \iP_{m-2}).
\end{equation*}
By Lemma~\ref{lem:mixed volume decomposition}, the RHS of the two equations above are equal.
Thus so are the LHS, as desired.
\end{proof}

\medskip

\subsection{Hyperbolicity of the mixed volume matrix}

%For any  polytopes $\iA, \iB \in \Ac$, the mixed volumes can then be expressed as
%\[  \iV(\iA,\iB, \iP_1, \ldots, \iP_{m-2}) \ = \  \frac{1}{m!} \. \big\< \hb_{\iA},   \bM \hb_{\iB} \big\>,  \]
%which will be proved in  Lemma~\ref{lem:mixed volume inner product}.
It  follows from \eqref{eq:mv inner product 2}
that the Alexandrov--Fenchel inequality is a special case of the following theorem.

\begin{thm}\label{t:AF-Hyp}
	Let \ts $\iW \subseteq \Rb^n$ \ts  be a linear subspace   of dimension \ts $m\geq 2$,
	let \ts $\Ac$ \ts be an $a$-type of $\iW$, and let
	\ts $\iP_1,\ldots, \iP_{m-2} \subset \iW$ \ts be simple, strongly isomorphic polytopes in $\Ac$.
	Then
	\[  \text{the matrix } \ \bMr(\iW, \Ac, \iP_1,\ldots, \iP_{m-2}) \  \text{satisfies \eqref{eq:Hyp}.} \]
\end{thm}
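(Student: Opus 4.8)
The plan is to prove Theorem~\ref{t:AF-Hyp} by induction on $m=\dim\iW\ge 2$. The inductive step will be essentially an application of the local--global principle (Theorem~\ref{t:Hyp}) to the combinatorial atlas constructed above in this section, whereas the base case $m=2$ carries the only genuinely analytic input.

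For the inductive step ($m\ge 3$), consider the atlas $\AA:=\AA(\iW,\Ac,\iP_1,\ldots,\iP_{m-2})$. It satisfies \eqref{eq:Inh} by Lemma~\ref{lem:AF-Inh} and \eqref{eq:PullEq}, hence \eqref{eq:Pull}, by Lemma~\ref{lem:AF-Pull}; moreover its unique non-sink vertex $\vf$ is regular, since \eqref{eq:Irr} holds by Lemma~\ref{lem:AF-Irr} (together with the observation that $\supp(\bM_\vf)=[\ar]$) and \eqref{eq:hPos} holds by Lemma~\ref{lem:AF-h-Pos}. Each sink vertex $\vf^{\<i\>}$ is assigned the matrix $\bM^{\<i\>}=\bM(\iW^{\<i\>},\Ac^{\<i\>},\iF_2^{\<i\>},\ldots,\iF_{m-2}^{\<i\>})$, which by Lemma~\ref{lem:ssi hereditary} is precisely the mixed volume matrix of the simple, strongly isomorphic polytopes $\iF_2^{\<i\>},\ldots,\iF_{m-2}^{\<i\>}$ inside the $(m-1)$-dimensional subspace $\iW^{\<i\>}$. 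Since $m-1\ge 2$, the inductive hypothesis applies and shows $\bM^{\<i\>}$ satisfies \eqref{eq:Hyp}, i.e.\ every sink vertex is hyperbolic. By Theorem~\ref{t:Hyp}, the vertex $\vf$ is hyperbolic, which is exactly the assertion that $\bM(\iW,\Ac,\iP_1,\ldots,\iP_{m-2})$ satisfies \eqref{eq:Hyp}.

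For the base case ($m=2$), the list $\iP_1,\ldots,\iP_{m-2}$ is empty and $\bM=\bM(\iW,\Ac)$ is the $\ar\times\ar$ matrix with $\aM_{ij}=\csc\theta^{\<ij\>}$ for adjacent facets $i,j$ of polygons in $\Ac$, with $\aM_{ii}=-\sum_{(i,j)\in\iJ}\cot\theta^{\<ij\>}$, and with all remaining entries zero. By Lemma~\ref{l:Hyp is OPE} it suffices to verify \eqref{eq:NDC}. Fix a polygon $\iP\in\Ac$ having the origin in its interior and set $\gb:=\hb_{\iP}$, so that $\gb$ is strictly positive and $\langle\gb,\bM\gb\rangle=2\,\Vol_2(\iP)>0$ by Lemma~\ref{lem:mixed volume inner product}. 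By Lemma~\ref{lem:h-vector span} there is a neighborhood $U$ of $\gb$ all of whose points are support vectors $\hb_{\iQ}$ of polygons $\iQ\in\Ac$, and for such $\vb,\wb\in U$ Lemma~\ref{lem:mixed volume inner product} gives $\langle\vb,\bM\wb\rangle=2\,\iV(\iQ_\vb,\iQ_\wb)$. Minkowski's first inequality for convex polygons in the plane, $\iV(\iQ_\vb,\iQ_\wb)^2\ge\Vol_2(\iQ_\vb)\,\Vol_2(\iQ_\wb)$ (a classical elementary fact, see e.g.\ \cite{Sch} or \cite{BZ-book}), then yields $\langle\vb,\bM\wb\rangle^2\ge\langle\vb,\bM\vb\rangle\,\langle\wb,\bM\wb\rangle$ for all $\vb,\wb\in U$. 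Taking $\wb=\gb$ and $\vb=\gb+t\ub$ with $t>0$ small and $\ub\in\Rb^\ar$ arbitrary, the terms of degree $0$ and $1$ in $t$ cancel, leaving $t^2\bigl(\langle\ub,\bM\gb\rangle^2-\langle\ub,\bM\ub\rangle\,\langle\gb,\bM\gb\rangle\bigr)\ge 0$; hence $\langle\ub,\bM\gb\rangle^2\ge\langle\ub,\bM\ub\rangle\,\langle\gb,\bM\gb\rangle$ for every $\ub\in\Rb^\ar$. Since $\langle\gb,\bM\gb\rangle>0$, this shows $\langle\ub,\bM\gb\rangle=0$ implies $\langle\ub,\bM\ub\rangle\le 0$, which is \eqref{eq:NDC}.

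I expect the base case to be the main obstacle: the atlas recursion lowers $m$ to $m-1$ but cannot reach below $m=2$, so one must supply the planar case of the Alexandrov--Fenchel inequality (Minkowski's inequality for plane convex bodies) by hand, and for a self-contained account this classical inequality should be reproved. An alternative to quoting it is to compute the inertia of the cyclic Jacobi matrix $\bM(\iW,\Ac)$ directly --- it is negative semidefinite modulo the $2$-dimensional space of ``translation'' support vectors and is positive on the open cone of genuine support vectors, after which one can deform the exterior angles $\theta^{\<ij\>}$ continuously to those of a regular polygon and invoke continuity of the spectrum --- but controlling the kernel throughout the deformation again reduces to the planar Alexandrov--Fenchel inequality, so the core difficulty is the same.
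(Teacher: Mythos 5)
Your proof is correct and follows essentially the same route as the paper: induction on $m$ with the local--global principle (Theorem~\ref{t:Hyp}) driving the inductive step via Lemmas~\ref{lem:AF-Inh}--\ref{lem:AF-Pull}, and the base case $m=2$ reduced to the planar Alexandrov--Fenchel (Minkowski) inequality, which the paper obtains from the Brunn--Minkowski inequality proved from scratch in $\S$\ref{ss:AF-BM}. Your derivation of \eqref{eq:NDC} in the base case --- perturbing $\gb$ to $\gb+t\ub$ via Lemma~\ref{lem:h-vector span} and reading off the $t^2$ coefficient --- is a clean variant of the paper's computation with $\vb=c(\hb_{\iQ}-\hb_{\iP})$, resting on exactly the same input.
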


\smallskip

We  build toward the proof of Theorem~\ref{t:AF-Hyp}.
Our first step is built on  the  Brunn--Minkowski inequality for $\Rb^2$ (note that this inequality does not assume that the polytopes are convex).

\smallskip

\begin{thm}[{\rm Brunn--Minkowski inequality in $\Rb^2$}{}]\label{thm:BM}
	Let \. $\iA, \ts\iB\ssu \rr^2$ \. be two convex polygons in the plane.
	Then
	\[ \sqrt{\area(\iA+\iB)} \ \geq \ \sqrt{\area(\iA)} \, + \, \sqrt{\area(\iB)}. \]
\end{thm}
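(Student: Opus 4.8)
The plan is to prove the Brunn--Minkowski inequality in the plane by the classical one-dimensional slicing (reparametrization) argument, essentially Lusternik's proof; it is completely elementary and in fact never uses convexity, so it applies to arbitrary compact $\iA,\iB\subseteq\Rb^2$. First I would dispose of the degenerate cases. If $\area(\iB)=0$ (so $\iB$ is a point or a segment), pick any $\xb\in\iB$; then $\iA+\xb\subseteq \iA+\iB$, hence $\area(\iA+\iB)\ge \area(\iA)=\big(\sqrt{\area(\iA)}+\sqrt{\area(\iB)}\,\big)^2$, and symmetrically if $\area(\iA)=0$. So from now on assume $\area(\iA),\area(\iB)>0$.

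Next comes the slicing step. For $t\in\Rb$ let $a(t)$ be the length of the vertical slice $\{y:(t,y)\in\iA\}$ and $b(t)$ the length of $\{y:(t,y)\in\iB\}$; for convex polygons these are continuous, compactly supported, each positive on an open interval, and by Cavalieri's principle $\int_{\Rb}a=\area(\iA)$, $\int_{\Rb}b=\area(\iB)$. Reparametrize by defining increasing functions $u,v\colon(0,1)\to\Rb$ via $\int_{-\infty}^{u(\lambda)}a(t)\,dt=\lambda\,\area(\iA)$ and $\int_{-\infty}^{v(\lambda)}b(t)\,dt=\lambda\,\area(\iB)$, so that $a(u(\lambda))\,u'(\lambda)=\area(\iA)$ and $b(v(\lambda))\,v'(\lambda)=\area(\iB)$. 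The map $w(\lambda):=u(\lambda)+v(\lambda)$ is strictly increasing, and the vertical slice of $\iA+\iB$ at the coordinate $w(\lambda)$ contains the Minkowski sum of the slice of $\iA$ at $u(\lambda)$ and the slice of $\iB$ at $v(\lambda)$, hence by the trivial one-dimensional Brunn--Minkowski inequality has length at least $a(u(\lambda))+b(v(\lambda))$.

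Putting this together, restricting to $t$ in the range of $w$ and changing variables $t=w(\lambda)$,
\[
\area(\iA+\iB)\ \ge\ \int_0^1 \big(a(u(\lambda))+b(v(\lambda))\big)\big(u'(\lambda)+v'(\lambda)\big)\,d\lambda .
\]
The integrand equals $a(u)u'+b(v)v'+a(u)v'+b(v)u'=\area(\iA)+\area(\iB)+a(u)v'+b(v)u'$, and by the AM--GM inequality $a(u)v'+b(v)u'\ge 2\sqrt{(a(u)u')(b(v)v')}=2\sqrt{\area(\iA)\area(\iB)}$, so the integrand is at least $\big(\sqrt{\area(\iA)}+\sqrt{\area(\iB)}\,\big)^2$ pointwise; integrating over $[0,1]$ finishes the proof.

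I expect the only delicate part to be the bookkeeping around the reparametrization: checking that $u,v$ are differentiable with the stated derivatives (which for piecewise-linear $a,b$ follows since the $t$-support of each body is a single interval on whose interior the slice length is positive), that the change of variables $t=w(\lambda)$ is legitimate, and that the integral converges near $\lambda=0,1$ where $u',v'$ may blow up while $\int_0^1 w'=w(1)-w(0)$ stays finite. As an alternative, entirely measure-theory-free route, one could run the Hadwiger--Ohmann induction: prove the inequality first for axis-parallel rectangles (immediate from AM--GM applied to $\sqrt{a_1a_2/((a_1+b_1)(a_2+b_2))}+\sqrt{b_1b_2/((a_1+b_1)(a_2+b_2))}\le 1$), then for finite interior-disjoint unions of rectangles by inducting on the number of rectangles and splitting both bodies by a common coordinate line chosen so that it divides their areas in the same ratio, and finally pass to convex polygons by inner grid approximation and monotonicity of $\area(\cdot)$ under inclusion; this is longer but avoids all analytic subtleties.
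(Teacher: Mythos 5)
Your main argument is correct, but it is a genuinely different route from the paper's. You prove the planar Brunn--Minkowski inequality by the one-dimensional slicing/reparametrization argument (Lusternik--Hadwiger): parametrize both bodies by the fraction of area to the left of a vertical line, compare the slice of $\iA+\iB$ at $u(\lambda)+v(\lambda)$ with the Minkowski sum of the individual slices, and finish with AM--GM under the integral sign. The paper instead runs exactly what you list as your ``alternative route'': the Hadwiger--Ohmann induction --- AM--GM for a single pair of bricks (Lemma~\ref{l:BM-box}), induction on the number of bricks with a common splitting line chosen to divide both regions in the same area ratio (Lemma~\ref{l:BM-boxed}), and a final grid-approximation limit for convex polygons. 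The trade-off is as you anticipate: your slicing proof is shorter and avoids the approximation step, but it leans on real-analytic bookkeeping (differentiability of the inverse distribution functions, the monotone change of variables, integrability of $u',v'$ near $\lambda=0,1$); the paper's brick induction is longer but uses nothing beyond AM--GM and finite case analysis until the single limit at the end, which matches its stated aim of a fully elementary, self-contained exposition. One small caveat: your claim that the slicing argument ``never uses convexity, so it applies to arbitrary compact $\iA,\iB$'' is an overstatement as written --- for non-convex compact sets the slices need not be intervals and the slice-length functions need not be continuous, so extra measure-theoretic care is required there; for convex polygons, the case actually at hand, your argument is fine.
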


\smallskip

This inequality is classical and is especially easy to prove in the plane.
For completeness, we include a short proof below, in~$\S$\ref{ss:AF-BM}.

\smallskip

\begin{lemma}\label{lem:AF-base}
	Theorem~\ref{t:AF-Hyp} holds for \ts $m=2$.
\end{lemma}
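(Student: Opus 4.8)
The plan is to reduce Theorem~\ref{t:AF-Hyp} in the base case $m=2$ to the planar Brunn--Minkowski inequality (Theorem~\ref{thm:BM}), using the eigenvalue criterion of Lemma~\ref{l:Hyp is OPE} in the form of condition~\eqref{eq:NDC}. First I would record what the mixed volume matrix is when $m=2$: the list $\iP_1,\dots,\iP_{m-2}$ is empty, the faces $\iF^{\<ij\>}$ appearing in its definition are single points, and with the convention $\Vol_0(\mathrm{pt})=1$ the matrix $\bM:=\bM(\iW,\Ac)$ depends only on $(\iW,\Ac)$, with entries $\aM_{ij}=\csc\theta^{\<ij\>}$ for distinct $(i,j)\in\iJ$, $\aM_{ij}=0$ for distinct $(i,j)\notin\iJ$, and $\aM_{ii}=-\sum_{(i,j)\in\iJ}\cot\theta^{\<ij\>}$. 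In particular Lemma~\ref{lem:mixed volume inner product} and Lemma~\ref{lem:mixed volume decomposition} apply verbatim in this degenerate regime.

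Next I would fix a polygon $\iP\in\Ac$; since $\iW$ is $2$-dimensional, $\iP$ is automatically simple, and by \eqref{eq:mv inner product 2} applied with $\iA=\iB=\iP$ (using $\iV(\iP,\iP)=\area(\iP)$) we have $\langle\hb,\bM\hb\rangle=2\area(\iP)>0$, where $\hb:=\hb_{\iP}$. By Lemma~\ref{l:Hyp is OPE}, to show that $\bM$ satisfies \eqref{eq:Hyp} it suffices to verify \eqref{eq:NDC} with the distinguished vector $\gb:=\hb$, namely that $\langle\vb,\bM\hb\rangle=0$ implies $\langle\vb,\bM\vb\rangle\le 0$ for every $\vb\in\Rb^{\ar}$.

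To verify this, fix $\vb\ne\0$ with $\langle\vb,\bM\hb\rangle=0$ and set $q:=\langle\vb,\bM\vb\rangle$. By Lemma~\ref{lem:h-vector span} there is $t_0>0$ so that for $|t|\le t_0$ the vector $\hb+t\vb$ is the support vector of a (automatically simple) polygon $\iP_t\in\Ac$ strongly isomorphic to $\iP$, with $\iP_0=\iP$. Expanding the quadratic form via \eqref{eq:mv inner product 2}, and using $\langle\vb,\bM\hb\rangle=0$ and symmetry of $\bM$, I obtain $2\area(\iP_t)=\langle\hb,\bM\hb\rangle+t^2 q=2\area(\iP)+t^2 q$ and $2\iV(\iP_{t_0},\iP_{-t_0})=\langle\hb+t_0\vb,\,\bM(\hb-t_0\vb)\rangle=2\area(\iP)-t_0^2 q$. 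By Minkowski's theorem (the expansion~\eqref{eq:mixed volume definition} with $n=2$ and $\lambda_1=\lambda_2=1$), $\area(\iP_{t_0}+\iP_{-t_0})=\area(\iP_{t_0})+2\iV(\iP_{t_0},\iP_{-t_0})+\area(\iP_{-t_0})=4\area(\iP)$. Since $\iP_{t_0}+\iP_{-t_0}$ is a convex polygon, Theorem~\ref{thm:BM} gives
\[ 2\sqrt{\area(\iP)}=\sqrt{\area(\iP_{t_0}+\iP_{-t_0})}\ \ge\ \sqrt{\area(\iP_{t_0})}+\sqrt{\area(\iP_{-t_0})}=2\sqrt{\area(\iP)+\tfrac{t_0^2}{2}q}, \]
so $\area(\iP)\ge\area(\iP)+\tfrac{t_0^2}{2}q$, which forces $q\le 0$, as required.

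The computations here are short; the only places needing care are the bookkeeping with $a$-types --- that $\iP_t$ really is a strongly isomorphic (simple) polygon in $\Ac$ for $|t|$ small, which is exactly the content of Lemma~\ref{lem:h-vector span} --- and checking that the mixed-volume identities of Lemma~\ref{lem:mixed volume inner product} and Minkowski's theorem are legitimate in the boundary case $m=2$ (empty polytope list, $0$-dimensional faces $\iF^{\<ij\>}$). I do not expect a genuine obstacle: all the real difficulty of the Alexandrov--Fenchel inequality has been pushed either into the planar Brunn--Minkowski inequality, proved elementarily in~$\S\ref{ss:AF-BM}$, or into the inductive local--global machinery of Theorem~\ref{t:Hyp} used to handle the cases $m\ge 3$ on top of this base case.
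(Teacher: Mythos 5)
Your proof is correct and follows the same strategy as the paper: verify \eqref{eq:NDC} via Lemma~\ref{l:Hyp is OPE} with $\gb=\hb_{\iP}$, use Lemma~\ref{lem:h-vector span} to realize the perturbation as a support vector, and reduce to the planar Brunn--Minkowski inequality. The only (harmless) difference is in the last step: the paper first derives the planar Alexandrov--Fenchel inequality $\iV(\iA,\iB)^2\ge\iV(\iA,\iA)\,\iV(\iB,\iB)$ from Theorem~\ref{thm:BM} and combines it with the relation $\iV(\iP,\iP)=\iV(\iP,\iQ)$, whereas you apply Brunn--Minkowski directly to the symmetric pair $\iP_{t_0},\iP_{-t_0}$ with $\iP_{t_0}+\iP_{-t_0}=2\iP$; both amount to the same few lines of algebra.
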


\smallskip

\begin{proof}
	First, observe that the Brunn--Minkowski inequality implies the Alexandrov--Fenchel inequality in the plane.
	Indeed, let \. $\iA,\iB \ssu \rr^2$ \. be convex polygons.
	 Then:
	 \begin{equation}\label{eq:BM-app-1}
	 	\area(\la\ts\iA\. +\. \mu\ts \iB) \ =_{\eqref{eq:mixed volume definition}} \ \la^2 \. \iV(\iA,\iA) \ + \  2 \ts\la \ts\mu \. \iV(\iA,\iB) \ + \ \mu^2 \. \iV(\iB,\iB).
	 \end{equation}
	On the other hand,
	\begin{equation}\label{eq:BM-app-2}
		\Big[\sqrt{\area(\la\ts\iA)} \, + \, \sqrt{\area(\mu\ts\iB)} \Big]^2  \ = \ \la^2 \. \area(\iA) \ + \  2\ts \la\ts \mu \. \sqrt{\area(\iA) \. \area(\iB)} \ + \ \mu^2 \. \area(\iB).
	\end{equation}
Taking the difference of \eqref{eq:BM-app-1} and \eqref{eq:BM-app-2} and applying Theorem~\ref{thm:BM}, we conclude that
\begin{equation}\label{eq:AF-base}
	\iV(\iA,\iB)^2 \ \geq \ \iV(\iA,\iA)\, \iV(\iB,\iB),
\end{equation}
as desired.

Now, let us show that the associated matrix \. $\bM:=\bM(\iW,\Ac)$ \. satisfies \eqref{eq:NDC}.  By
Lemma~\ref{l:Hyp is OPE}, this implies \eqref{eq:Hyp} and proves the result.
Let \ts $\gb = \hb_{\iP}$ \ts for some convex polygon \ts $\iP \in \Ac$ \ts with \ts $\ar$ \ts edges.
Note that \. $\< \gb, \bM \gb\> \ = \ 2 \. \area(\iP) >0$ by \eqref{eq:mv inner product 2}.
Let \ts $\vb \in \Rb^{\ar}$ \ts be an arbitrary vector satisfying \. $\<\vb, \bM \gb\>=0$.
By Lemma~\ref{lem:h-vector span}, there exists \. $c\in \Rb$ \. and a convex polygon~$\iQ$
strongly isomorphic to $\iP$,  such that  \. $\vb = c(\hb_{\iQ} - \hb_{\iP})$.
Note that polygon \ts $\iQ$ \ts has \ts $\ar$ \ts edges parallel to the corresponding
edges of~$\iP$.

Now observe that \. $\<\vb, \bM \gb\>=0$ \. is equivalent to
\. $\<\hb_{\iP}, \bM \hb_{\iP}\>  \ = \  \<\hb_{\iQ}, \bM \hb_{\iP}\>$,
which by \eqref{eq:mv inner product 2} is equivalent to
\begin{equation}\label{eq:AF-base-2}
	\iV(\iP,\iP) \ = \ \iV(\iP,\iQ).
\end{equation}
Together with \eqref{eq:AF-base}, this implies that
\begin{equation}\label{eq:AF-base-3}
	\iV(\iP,\iQ) \ \geq  \ \iV(\iQ,\iQ).
\end{equation}
Now note that
\begin{align*}
	\<\vb, \bM \vb \> \ &= \  c^2 \big( \<\hb_{\iP}, \bM \hb_{\iP}\> \ + \ \<\hb_{\iQ}, \bM \hb_{\iQ}\>  \ -  \ 2 \.\<\hb_{\iP}, \bM \hb_{\iQ}\>\big) \\
	\ &=_{\eqref{eq:mv inner product 2}} \  c^2 \big(\iV(\iP,\iP) \ + \ \iV(\iQ,\iQ)  \ -  \ 2 \iV(\iP,\iQ) \big).
\end{align*}
The RHS is nonpositive by \eqref{eq:AF-base-2} and \eqref{eq:AF-base-3}, and the proof is complete.
\end{proof}

\smallskip

\begin{proof}[Proof of Theorem~\ref{t:AF-Hyp}]
We prove the theorem by induction on $m$.
The case $m=2$ has been proved in Lemma~\ref{lem:AF-base},
so we can assume that \ts $m\geq 3$.
The atlas \. $\AA = \AA(\iW, \Ac, \iP_1,\ldots, \iP_{m-2})$ \.
satisfies \eqref{eq:Inh} and \eqref{eq:PullEq} by Lemma~\ref{lem:AF-Inh}
and Lemma~\ref{lem:AF-Pull}, respectively.  Note that~$\vf$ is a
regular vertex by Lemma~\ref{lem:AF-Irr} and Lemma~\ref{lem:AF-h-Pos},
and that every out-neighbor \ts $\vf^{\<i\>}$ \ts of \ts $\vf$ \ts
satisfies \eqref{eq:Hyp} by the induction assumption.
The conclusion of the theorem now follows from Theorem~\ref{t:Hyp}.
\end{proof}

\smallskip

 \begin{lemma}[{\rm cf.~\cite[Lemma~2.4.12]{Sch}}{}]\label{l:SIP}
Let \. $\iP, \iQ  \in \Rb^{n}$ \. be convex polytopes, and suppose we have \ts
$\la,\la',\mu,\mu'>0$.  Then polytopes \. $\la\ts \iP \. + \. \mu\ts \iQ$ \.  and
\. $\la'\ts \iP \. + \. \mu'\ts \iQ$ \. are strongly isomorphic.
 \end{lemma}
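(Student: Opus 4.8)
The plan is to reduce the statement to the single observation that, for any direction $\ub$, the dimension of the face $\iF(\la\iP + \mu\iQ,\ub)$ does not depend on the positive scalars $\la,\mu$. Once that is established, the definition of strongly isomorphic polytopes gives the lemma immediately: $\dim\iF(\la\iP+\mu\iQ,\ub) = \dim\iF(\la'\iP+\mu'\iQ,\ub)$ for every $\ub$.

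First I would recall the standard face decomposition for Minkowski sums: for convex polytopes $\iP,\iQ$, scalars $a,b>0$, and any $\ub\in\Sb^{n-1}$,
\[ \iF(a\iP + b\iQ,\ub) \ = \ a\,\iF(\iP,\ub) \ + \ b\,\iF(\iQ,\ub). \]
This is elementary: a point $a\xb+b\yb$ with $\xb\in\iP$, $\yb\in\iQ$ maximizes $\<\ub,\cdot\>$ over $a\iP+b\iQ$ exactly when $\<\ub,a\xb+b\yb\>$ equals $a\sup_{\xb'\in\iP}\<\ub,\xb'\> + b\sup_{\yb'\in\iQ}\<\ub,\yb'\>$, and since $a,b>0$ this is equivalent to $\xb\in\iF(\iP,\ub)$ together with $\yb\in\iF(\iQ,\ub)$; the reverse containment is immediate.

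Next I would note that for a polytope $\iF$ and $a>0$, the affine hull of $a\iF$ has the same direction space $L(\iF)$ as that of $\iF$, where $L(\iF)$ denotes the linear subspace spanned by all differences of points of $\iF$; and that for polytopes $\iF_1,\iF_2$ the affine hull of $\iF_1+\iF_2$ is a translate of $L(\iF_1)+L(\iF_2)$, so that $\dim(\iF_1+\iF_2)=\dim\bigl(L(\iF_1)+L(\iF_2)\bigr)$. Combining this with the face decomposition above, taking $\iF_1=\iF(\iP,\ub)$ and $\iF_2=\iF(\iQ,\ub)$, yields
\[ \dim \iF(\la\iP+\mu\iQ,\ub) \ = \ \dim\bigl(L(\iF(\iP,\ub)) + L(\iF(\iQ,\ub))\bigr), \]
whose right-hand side is manifestly independent of $\la,\mu$. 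Hence $\dim\iF(\la\iP+\mu\iQ,\ub)=\dim\iF(\la'\iP+\mu'\iQ,\ub)$ for every $\ub$, which is exactly the assertion that $\la\iP+\mu\iQ$ and $\la'\iP+\mu'\iQ$ are strongly isomorphic, the relevant ambient subspace being any linear subspace $\iW$ containing both polytopes (e.g.\ $L(\iP)+L(\iQ)$).

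I do not expect a genuine obstacle here: both ingredients — the Minkowski face decomposition and the invariance of the dimension of a Minkowski sum under positive rescaling of its summands — are routine. The only point requiring a moment of care is fixing the ambient subspace $\iW$ used in the definition of ``strongly isomorphic'' consistently for all four polytopes $\iP$, $\iQ$, $\la\iP+\mu\iQ$, $\la'\iP+\mu'\iQ$; choosing $\iW$ large enough to contain all of them makes this a formality.
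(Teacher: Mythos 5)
Your proposal is correct and follows essentially the same route as the paper: both reduce the claim to the identity $\iF(a\iP+b\iQ,\ub)=a\,\iF(\iP,\ub)+b\,\iF(\iQ,\ub)$ and then observe that the dimension of the resulting face equals the dimension of the span of the direction spaces of the two faces, which is independent of the positive scalars. Your use of direction spaces $L(\cdot)$ is just a cleaner way of phrasing the paper's step of translating the faces to contain the origin before taking the linear span.
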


\smallskip

\begin{proof}
By the definition of the Minkowski sum, observe that
\[ \dim \iF(\iA\ts +\ts\iB,\ub) \ = \ \dim \ts\bigl(\ts \iF(\iA,\ub)+\iF(\iB,\ub)\ts\bigr)  \ = \ \dim \rr\big\<\iF(\iA,\ub),\iF(\iB,\ub)\big\>,
\]
for all convex polytopes \. $\iA, \iB\in \rr^n$ \.
and for all unit vectors \. $\ub \in \Sb^{n-1}$.
Note that the last equality holds only if both \ts $\iF(\iA,\ub)$ \ts and \ts $\iF(\iB,\ub)$ \ts contain the origin,
which we can assume without loss of generality by translating the polytopes.
Since the dimension of the subspace
is invariant under scaling of the vectors, the result follows.
\end{proof}

\smallskip

\begin{proof}[Proof of Theorem~\ref{thm:AF}]
Let \. $\iA, \iB,\iP_1,\ldots, \ldots \iP_{n-2}  \in \Rb^{n}$ \.
be arbitrary simple strongly isomorphic polytopes with $a$-type $\Ac$.
By Theorem~\ref{t:AF-Hyp} with \ts $m= n$ \ts and \ts $\iW\ = \Rb^{n}$,
combined with~\eqref{eq:mv inner product 2}, we obtain
the Alexandrov--Fenchel inequality~\eqref{eq:AF} in this case:
\begin{equation}\label{eq:AF-polytope}
	 \iV(\iA,\iB, \iP_1, \ldots, \iP_{n-2})^2  \ \geq \ \iV(\iA,\iA, \iP_1, \ldots, \iP_{n-2}) \, \iV(\iB,\iB, \iP_1, \ldots, \iP_{n-2}).
\end{equation}

Suppose now that \. $\iA, \iB,\iP_1,\ldots, \ldots \iP_{n-2}  \in \Rb^{n}$ \.
are general simple convex polytopes.  Let \ts $\ve>0$, and define
$$\iQ \ := \ \iA \. + \. \iB \. + \. \iP_1 \. + \. \ldots \. +  \. \iP_{n-2} \..
$$
By Lemma~\ref{l:SIP}, polytopes \. $\iA+\ve\ts\iQ$, \. $\iB+\ve\ts\iQ$, \.
$\iP_{1}+\ve\ts\iQ$, \ldots, \ts $\iP_{n-2} +\ve\ts\iQ$ \. are all strongly isomorphic.
Note that they are not necessarily simple; in that case use \ts $\iQ \gets \iQ+\iQ'$ \ts
where \ts $\iQ'$ \ts is a generic polytope obtained as a Minkowski sum of vectors orthogonal
to unit vectors \ts $\ub$ \ts for which \ts $\iF(\iQ,\ub)$ \ts is a non-simple vertex.
Taking the limit \ts $\ve\to 0$ \ts and using monotonicity of mixed volumes,
we obtain~\eqref{eq:AF-polytope} for general polytopes.  We omit the details.

Finally, recall that general convex bodies can be approximated to an arbitrary precision
by collections of convex polytopes. The theorem now follows by taking continuous limits
of \eqref{eq:AF-polytope}.
\end{proof}

\medskip

\subsection{Proof of the Brunn--Minkowski inequality in the plane}\label{ss:AF-BM}
For completeness, we include a simple proof of Theorem~\ref{thm:BM} by induction
which goes through non-convex regions and uses a limit argument at the end.

\smallskip

A \defn{brick} is an axis-parallel rectangle \. $[x_1,x_2]\times [y_1,y_2]$,
for some \. $x_1<x_2$ \. and \. $y_1< y_2$.  A \defn{brick region} is a
union of finitely many brick, disjoint except at the boundary.  Note that
brick regions are not necessarily convex.

\smallskip

\begin{lemma}  \label{l:BM-box}
Brunn--Minkowski inequality holds for bricks in the plane.
\end{lemma}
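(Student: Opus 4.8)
The plan is to reduce the statement to an elementary inequality via an explicit computation. Since both the Minkowski sum and the area are invariant under translating $\iA$ and $\iB$ independently (translating $\iA$ by $\ba$ and $\iB$ by $\bb$ translates $\iA + \iB$ by $\ba + \bb$ and changes none of the three areas), I may assume $\iA = [0,a_1]\times[0,a_2]$ and $\iB = [0,b_1]\times[0,b_2]$ for some $a_1,a_2,b_1,b_2 > 0$. For such axis-parallel bricks the Minkowski sum is again a brick, namely $\iA + \iB = [0,a_1+b_1]\times[0,a_2+b_2]$, since the extreme value of each coordinate on a sum is the sum of the extreme values.

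It then remains to verify that
\[ \sqrt{(a_1+b_1)(a_2+b_2)} \ \geq \ \sqrt{a_1 a_2} \, + \, \sqrt{b_1 b_2}. \]
All quantities are nonnegative, so squaring is legitimate; expanding the left-hand side and cancelling the terms $a_1 a_2$ and $b_1 b_2$ reduces the claim to
\[ a_1 b_2 \, + \, b_1 a_2 \ \geq \ 2\sqrt{(a_1 b_2)(b_1 a_2)}, \]
which is precisely the AM--GM inequality applied to the nonnegative reals $a_1 b_2$ and $b_1 a_2$. This finishes the proof of the lemma.

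I expect no real obstacle here: this statement is the base case for the induction on the number of bricks carried out in this subsection, and the only point that needs (minimal) care is the translation reduction to bricks with a common corner at the origin, which is immediate. The genuine content of the planar Brunn--Minkowski inequality will enter only afterwards, when one passes from a single brick to a finite brick region by an inductive splitting argument, and then from brick regions to arbitrary convex polygons by a limiting approximation.
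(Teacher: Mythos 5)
Your proof is correct and is essentially the same as the paper's: reduce to axis-parallel bricks with a corner at the origin, note that the Minkowski sum is the brick with summed side lengths, square the inequality, and conclude by AM--GM applied to \. $a_1 b_2$ \. and \. $b_1 a_2$. The only difference is that you spell out the translation reduction and the identification of \ts $\iA+\iB$ \ts explicitly, which the paper leaves implicit.
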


\begin{proof}  Let \. $\iA, \iB\ssu\rr^2$ \. be bricks with side
lengths \. $(a_1,a_2)$ \. and \. $(b_1,b_2)$, respectively.
The Brunn--Minkowski inequality in this case states:
\begin{equation}\label{eq:BM-box}
\sqrt{(a_1\ts +\ts b_1)(a_2\ts +\ts b_2)} \ \ge \ \sqrt{a_1\ts b_1} \, + \, \sqrt{a_2\ts b_2}\,.
\end{equation}
Squaring both sides gives
$$
(a_1\ts +\ts b_1)(a_2\ts +\ts b_2) \ \ge \ a_1\ts b_1 \, + \, a_2\ts b_2 \, + \, 2\. \sqrt{a_1\ts b_1\ts a_2\ts b_2} \,,
$$
which in turn follows from the AM-GM inequality:
$$
a_1\ts b_2 \, + \, b_1\ts a_2 \ \ge \ 2\. \sqrt{a_1\ts b_1\ts a_2\ts b_2} \,.
$$
\end{proof}

\smallskip

\begin{lemma}  \label{l:BM-boxed}
Brunn--Minkowski inequality holds for brick regions in the plane.
\end{lemma}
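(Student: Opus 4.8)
The plan is to run the classical Hadwiger--Ohmann induction: a brick region is cut along a coordinate line into two brick regions with strictly fewer bricks, and the inequality is propagated through the cut. Concretely, I would induct on $N=p+q$, where $p$ and $q$ are the numbers of bricks in chosen decompositions of $\iA$ and $\iB$. The base case $p=q=1$ is precisely Lemma~\ref{l:BM-box}. Note that a nonempty brick region has positive area, so we may assume $\area(\iA),\area(\iB)>0$; and since $p+q=2$ forces the base case, in the inductive step we may assume, after possibly swapping $\iA$ and $\iB$, that $p\ge 2$.

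For the inductive step, pick two bricks $\iA_1,\iA_2$ of $\iA$ with disjoint interiors. Since the interior of a brick is a product of open intervals, one of these products is empty, i.e.\ the open edge-intervals of $\iA_1$ and $\iA_2$ are disjoint in some coordinate direction $i\in\{1,2\}$. Hence there is a value $c$, a bounding edge of $\iA_1$, with $\iA_1\subseteq\{x_i\le c\}$ and $\iA_2\subseteq\{x_i\ge c\}$. Put $\iA^+:=\iA\cap\{x_i\ge c\}$ and $\iA^-:=\iA\cap\{x_i\le c\}$. Intersecting a brick with a coordinate half-plane gives a brick or a degenerate set, so $\iA^\pm$ are brick regions; moreover $\iA_1$ contributes only a zero-area slice to $\iA^+$ and $\iA_2$ only a zero-area slice to $\iA^-$, so $\iA^+$ and $\iA^-$ each decompose into at most $p-1$ positive-area bricks. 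Finally $\iA_2\subseteq\iA^+$ and $\iA_1\subseteq\iA^-$ have positive area, so $\lambda:=\area(\iA^+)/\area(\iA)\in(0,1)$.

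Since $s\mapsto\area(\iB\cap\{x_i\ge s\})$ is continuous and decreases from $\area(\iB)$ to $0$, the intermediate value theorem gives $c'$ with $\iB^+:=\iB\cap\{x_i\ge c'\}$ of area $\lambda\,\area(\iB)$; set $\iB^-:=\iB\cap\{x_i\le c'\}$, so $\area(\iB^-)=(1-\lambda)\area(\iB)$, and $\iB^\pm$ are brick regions with at most $q$ bricks. Now $\iA^++\iB^+\subseteq\{x_i\ge c+c'\}$ and $\iA^-+\iB^-\subseteq\{x_i\le c+c'\}$ have disjoint interiors and are both contained in $\iA+\iB$, so
\[
\area(\iA+\iB)\ \ge\ \area(\iA^++\iB^+)\ +\ \area(\iA^-+\iB^-).
\]
Each pair $(\iA^\pm,\iB^\pm)$ satisfies the induction hypothesis (total brick count $\le(p-1)+q<p+q$), so $\area(\iA^\pm+\iB^\pm)\ge\big(\sqrt{\area(\iA^\pm)}+\sqrt{\area(\iB^\pm)}\big)^2$. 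Substituting $\area(\iA^+)=\lambda\,\area(\iA)$, $\area(\iB^+)=\lambda\,\area(\iB)$ and the analogous equalities for the minus parts, the right-hand side collapses to $\lambda\,\big(\sqrt{\area(\iA)}+\sqrt{\area(\iB)}\big)^2+(1-\lambda)\,\big(\sqrt{\area(\iA)}+\sqrt{\area(\iB)}\big)^2=\big(\sqrt{\area(\iA)}+\sqrt{\area(\iB)}\big)^2$, and taking square roots closes the induction.

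The step I expect to be the main obstacle is the combinatorial bookkeeping of the cut: checking that a coordinate-line separation of two interior-disjoint bricks really yields two brick regions each with strictly fewer bricks (so the induction parameter drops on both branches), and that one can simultaneously split $\iB$ by a parallel line into the same area ratio $\lambda:(1-\lambda)$. Both are routine, but they are the heart of the Hadwiger--Ohmann scheme; the remaining ingredients (disjointness of the interiors of the two Minkowski sums, and the final convexity-free algebra) are immediate.
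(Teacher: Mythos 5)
Your proposal is correct and is essentially the paper's own argument: the same Hadwiger--Ohmann induction on the total brick count, with the base case given by Lemma~\ref{l:BM-box}, a cut of $\iA$ along an axis-parallel line separating two of its bricks, a parallel cut of $\iB$ realizing the same area ratio, superadditivity of area across the cut, and the final algebra. Your write-up just fills in the bookkeeping (the existence of the separating coordinate line, the intermediate-value argument for the cut of $\iB$, and the strict drop in the induction parameter) that the paper leaves implicit.
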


\begin{proof}  Let \. $\iA, \iB\ssu\rr^2$ \. be brick regions.
We use induction on the total number \ts $k\ge 2$ \ts of the bricks in
both regions.  When $k=2$ the result is given by Lemma~\ref{l:BM-box},
so we can assume $k\ge 3$.  Then one of the regions, say~$\iA$,
has at least two bricks.

Denote by \. $H$ \. the axis-parallel line which separates some bricks
in~$\iA$.  Denote by \ts $\iA_1$ \ts and \ts $\iA_2$ \ts brick regions
of~$\iA$ separated by~$H$, and let \ts $\theta:=\area(\iA_1)/\area(\iA)$.
We can always move \ts $\iA$ \ts so that \ts $H$ \ts contains the origin,
and then move
\ts $\iB$ \ts so that \ts $H$ \ts separates \ts
$\iB$ \ts into two brick regions \ts $\iB_1$ \ts and \ts $\iB_2$ \ts with
the same ratio: \ts $\area(\iB_1)/\area(\iB)=\theta$.  See an example in
Figure~\ref{f:BM-brick}.

\begin{figure}[hbt]
\begin{center}
\includegraphics[width=10.8cm]{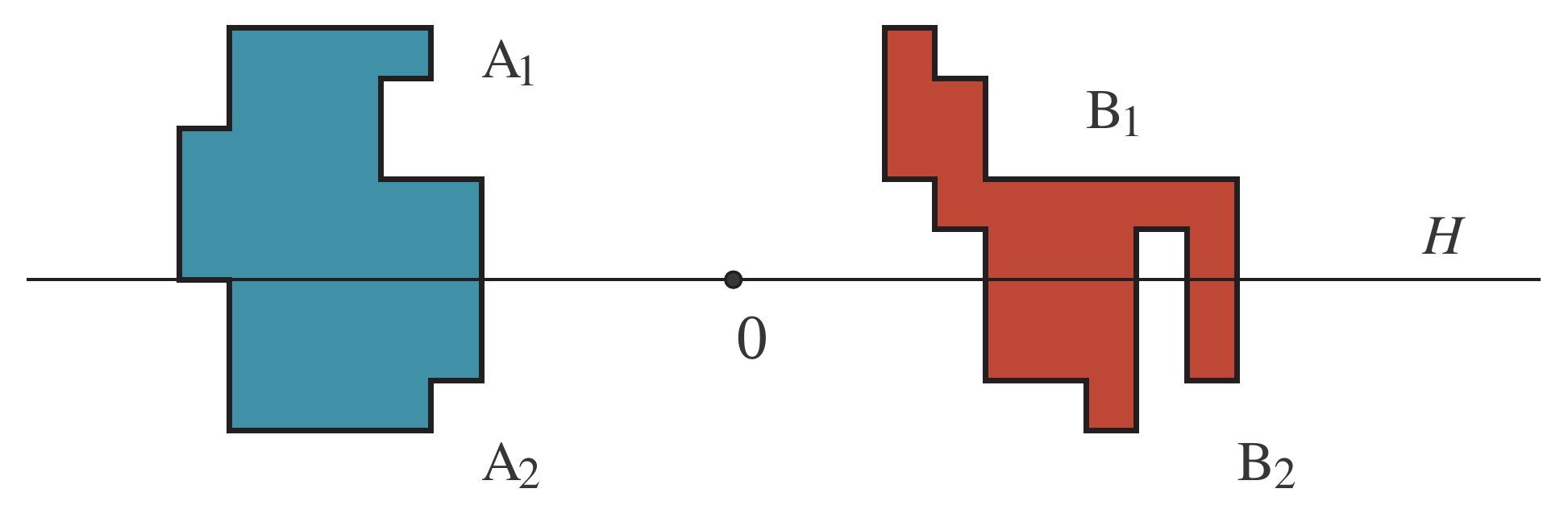}
\end{center}
\caption{Two brick regions \ts $\iA$ \ts and \ts $\iB$ \ts
divided by a line~$H$ with the same area ratios.}
\label{f:BM-brick}
\end{figure}

Observe that the combined number of bricks in \ts $(\iA_1,\iB_1)$ \ts
is smaller than~$k$, so inductive assumption applies.  The same
holds for \ts $(\iA_2,\iB_2)$.  We then have:
$$\aligned
\. & \area(\iA+\iB) \ \ge \ \area(\iA_1+\iB_1) \. + \.  \area(\iA_2+\iB_2) \\
& \qquad \ge \ \left[\sqrt{\area(\iA_1)} \. + \. \sqrt{\area(\iB_1)}\right]^2 \, + \, \left[\sqrt{\area(\iA_2)} \. + \. \sqrt{\area(\iB_2)}\right]^2\\
& \qquad\ge \ \left[\sqrt{\theta\ts\area(\iA)} \. + \. \sqrt{\theta\ts\area(\iB)}\right]^2 \, + \, \left[\sqrt{(1-\theta)\ts\area(\iA)} \. + \. \sqrt{(1-\theta)\ts\area(\iB)}\right]^2\\
& \qquad\ge \ \bigl[\theta \ts + \ts (1-\theta)\bigr] \.
\left[\sqrt{\area(\iA)} \. + \. \sqrt{\area(\iB)}\right]^2  \ = \ \sqrt{\area(\iA)} \. + \. \sqrt{\area(\iB)}\,.
\endaligned
$$
Here the first inequality follows since sets \ts $\iA_1+\iB_1$ \ts and \ts $\iA_2+\iB_2$ \ts lie on different
sides of~$H$.  The second inequality follows from induction assumption.  The remaining inequalities
are trivial equalities.  This completes the proof.
\end{proof}

\smallskip

\begin{proof}[Proof of Theorem~\ref{thm:BM}]
Let \ts $\iA, \iB\ssu \rr^2$ be two convex polygons and let \ts $\ve>0$.
Consider the scaled square grid \ts $\ve\ts \zz^2$ \ts and denote by
\ts $\iA_\ve,\iB_\ve$ \ts the unions of \ts $\ve\times \ve$ \ts squares completely
inside \ts $\iA, \iB$.  Observe that \. $\area(\iA_\ve)\to \area(\iA)$, \ts
$\area(\iB_\ve)\to \area(\iB)$, \ts and \. $\area(\iA_\ve+\iB_\ve)\to \area(\iA+\iB)$, \ts as
\ts $\ve\to 0$.  The result now follows by applying Lemma~\ref{l:BM-boxed} to
brick regions \. $(\iA_\ve,\iB_\ve)$ \. and taking the limit.
\end{proof}

\bigskip

\section{Final remarks}\label{s:finrem}

\subsection{Our sources}\label{ss:finrem-hist}
As we mentioned in the introduction, this paper is written with
expository purposes.  We present no new results except for
the tangential Theorem~\ref{t:simp} which can only be understood
in the context of the proof of Theorem~\ref{t:matroids-BH}
in Section~\ref{s:matroid}.  While the majority of the presentation
is new, some of it borrows more or less directly from other
sources.  Here is a quick reference guide.

Section~\ref{s:atlas} is almost directly lifted from~\cite{CP}.
Parts of it are strongly influenced by~\cite{BH} and~\cite{SvH19},
notably the proof of Lemma~\ref{l:Hyp is OPE}.  Section~\ref{s:matroid}
is adapted and substantially simplified from~\cite{CP}, so much that
it appears unrecognizable.  Note that we omit the equality conditions
which can be similarly adapted.

Section~\ref{s:Lorentzian} was originally intended to be included
in~\cite{CP}, but was left out when that paper exploded in size.
The aim of that section is to emphasize that the Lorentzian polynomial
approach in \cite{ALOV,BH} is a special case of ours.  There are many
indirect connections to all three papers, but the presentation here
seems novel.  Note that there are several equivalent definitions
of Lorentzian polynomials (see~\cite[\S2]{BH}), and we choose
the one closest to our context for convenience.

Section~\ref{s:AF ineq} came largely as a byproduct of our original
effort in~\cite{CP} to understand Stanley's inequality via the proof of the
Alexandrov--Fenchel inequality in~\cite{SvH19} and Stanley's
original paper~\cite{Sta}.  In an effort to make the presentation
self-contained, we borrow liberally from~\cite{Sch}.  Our presentation
of the Brunn--Minkowski inequality is standard and follows \cite[$\S$12.2]{Mat} and
\cite[$\S$7.7,\ts \S41.4]{Pak}.

Note that in our presentation, the totality of the proof of the
Alexandrov--Fenchel inequality is a rather lengthy union of
Section~\ref{s:atlas} and Section~\ref{s:AF ineq}.  There are several other
relatively recent proofs \cite{CEKMS,KK,SvH19,Wang} based on different
ideas and which employ existing technologies to a different degree.   
Obviously, the notions of ``simple'' and ``self-contained'' we used 
in the introductions are subjective, so we can only state our own view. 
Similarly, we challenge the assessment in~\cite{KK} which call their 
proof ``elementary''.  

Let us single out \cite{CEKMS} which relates the proof of the
Alexandrov--Fenchel inequality in~\cite{SvH19} and (implicitly)
the polynomial method in~\cite{BH}.  Although our work is independent
of~\cite{CEKMS}, it would be curious to see if that method can be extended
to the full power of the combinatorial atlas technology in~\cite{CP}.

Finally, let us mention that in the special case of brick polytopes,
our proof of the Alexandrov--Fenchel inequality simplifies so much
that it becomes known and elegantly presented in~\cite{vL}.  See
also~\cite{Gur} for a generalization and a modern treatment from
the Lorentzian polynomial point of view.

\smallskip

\subsection{Stanley's inequality}\label{ss:finrem-Sta}
Recall the straightforward derivation of Stanley's inequality
(for the number of certain linear extensions in posets) from
the Alexandrov--Fenchel inequality given in~\cite{Sta}.
Given the linear algebraic proof of the latter in Section~\ref{s:AF ineq},
one can ask why do we have such a lengthy proof of them in~\cite[$\S$14]{CP}.
There are two reasons for this, one technical and one structural.

The technical reason is that our approach allows us to obtain $q$-analogues
and more general deformations of Stanley's inequality, which do not seem to
follow directly from the Alexandrov--Fenchel inequality.  The structural
reason is that we really aim to rederive the equality conditions for
Stanley's inequality which were recently obtained by a difficult argument
in a breakthrough paper~\cite{SvH}.

In a nutshell, we employ self-similarity inherent to the problem,
in terms of faces of order polytopes used to translate the problem into geometry.
Applying iteratively the argument in our proof of the Alexandrov--Fenchel
inequality, allowed us to streamline the construction and make it completely
explicit if rather lengthy.  This, in turn, gave both the equality conditions
and the deformations mentions above.

It would be interesting to see if the argument along these lines can be
replicated in other cases.  In particular, the Kahn--Saks inequality in~\cite{KS}
is the closest to Stanley's inequality, and yet does not have a combinatorial
atlas proof.  Note that the equality conditions are also harder to obtain
in this case, and they are not even conjectured at this point,
see~\cite[$\S$8.3]{CPP2}.

Finally, let us emphasize that our proof of the Alexandrov--Fenchel inequality
does not extend to give the equality conditions in full generality.  There are
two reasons for this:  parallel translations of the facets and the need to take
limits.  While the former is ``combinatorial'' and is an obstacle to making the
equality characterization explicit, the latter is more critical as taking limits can
\emph{create} equalities.

To appreciate the distinction, the reader can think of convex polygons in the plane,
where the usual \emph{isoperimetric inequality} is always strict vs.\ the circle which
is the limit of such polygons, and where the isoperimetric inequality is an equality.
While the equality conditions are classically understood for the Brunn--Minkowski
inequality \cite[$\S$8]{BZ-book} and for the polytopal case of the Alexandrov--Fenchel
inequality~\cite{SvH}, for general convex bodies new ideas are needed.

\smallskip

\subsection{Simplicial complexes}
Theorem~\ref{t:abs} shows that an abstract simplicial complex is necessarily a matroid
if the corresponding combinatorial atlas satisfies~\eqref{eq:Hyp}.
In a way, this result is comparable to the equivalence between Lorentzian polynomials
and $M$-convexity in \cite[Thm~3.10~(1)--(7)]{BH}.

In fact, there are many simplicial complexes for which the sequence \. $(\aI_k)_{k \geq 0}$ \.
of number of faces of cardinality $k$, is not unimodal, let alone log-concave.
The face lattice of simplicial complexes is especially interesting and well studied.
In this case, unimodality was known as the \emph{Motzkin conjecture} (1961), which
was disproved in~\cite{Bjo}.  There, Bj\"{o}rner gave an example of a $24$-dimensional
simplicial polytopes for which the \ts \emph{$f$-vector} is not unimodal.
See also smaller examples in \cite{BL,Bjo2} of $20$-dimensional simplicial polytopes,
and~\cite{Eck} which proved that this dimension cannot be lowered.

Finally, let us mention that in Section~\ref{s:matroid} we start with a simplicial complex
and then we build an atlas, while in Section~\ref{s:Lorentzian} we build an atlas starting
with a Lorentzian polynomial.  On the other hand, in~\cite{ALOV-RW}, the authors start with
a Lorentzian polynomial and then build a simplicial complex.  The connection between
these approaches is yet to be fully understood.

\vskip.7cm

\subsection*{Acknowledgements}
We are grateful to Matt Baker, June Huh, Jeff Kahn, Gaiane Panina, Greta Panova,
Yair Shenfeld, Hunter Spink, Ramon van Handel and Cynthia Vinzant for helpful
discussions and remarks on the subject.
The first author was partially supported by the Simons Foundation.
The second author was partially supported by the~NSF.

%\vskip1.1cm

\newpage

%

%%%%%%%%%%%%%%%%%%%%%%%%%%%%%%%%%%%%%%%%%%%%%%%%%%%%%%%%%%%%%%%%%%%%%%%%%%%%%
%%%%%%%%%%%%%%%%%%%%%%%%%%%%%%%%%%%%%%%%%%%%%%%%%%%%%%%%%%%%%%%%%%%%%%%%%%%%%
%%%%%%%%%%%%%%%%%%%%%%%%%%%%%%%%%%%%%%%%%%%%%%%%%%%%%%%%%%%%%%%%%%%%%%%%%%%%%
%%%%%%%%%%%%%%%%%%%%%%%%%%%%%%%%%%%%%%%%%%%%%%%%%%%%%%%%%%%%%%%%%%%%%%%%%%%%%
%%%%%%%%%%%%%%%%%%%%%%%%%%%%%%%%%%%%%%%%%%%%%%%%%%%%%%%%%%%%%%%%%%%%%%%%%%%%%
%%%%%%%%%%%%%%%%%%%%%%%%%%%%%%%%%%%%%%%%%%%%%%%%%%%%%%%%%%%%%%%%%%%%%%%%%%%%%
%%%%%%%%%%%%%%%%%%%%%%%%%%%%%%%%%%%%%%%%%%%%%%%%%%%%%%%%%%%%%%%%%%%%%%%%%%%%%
%%%%%%%%%%%%%%%%%%%%%%%%%%%%%%%%%%%%%%%%%%%%%%%%%%%%%%%%%%%%%%%%%%%%%%%%%%%%%
%%%%%%%%%%%%%%%%%%%%%%%%%%%%%%%%%%%%%%%%%%%%%%%%%%%%%%%%%%%%%%%%%%%%%%%%%%%%%

\end{document}